\documentclass{amsart}
\usepackage{amssymb,latexsym}
\usepackage{amscd,amsthm}
\usepackage{mathrsfs}

\usepackage[all]{xy}
\usepackage{tikz}

\usepackage{tikz-cd}

\newtheorem{theorem}{Theorem}[section]

\newtheorem{lemma}[theorem]{Lemma}
\newtheorem{setup}[theorem]{Setup}
\newtheorem{proposition}[theorem]{Proposition}
\newtheorem{corollary}[theorem]{Corollary}

\theoremstyle{definition}
\newtheorem{definition}[theorem]{Definition}

\newtheorem*{remark}{Remark}

\newtheorem*{example}{Example}

\DeclareMathOperator{\Ext}{Ext}
\DeclareMathOperator{\Hom}{Hom}
\DeclareMathOperator{\Tor}{Tor}

\DeclareMathOperator{\im}{Im}

\newcommand{\cat}[1]{\mathcal{#1}}           

\newcommand{\tensor}{\otimes}

\newcommand{\class}[1]{\mathcal{#1}}   

\newcommand{\Z}{\mathbb{Z}}
\newcommand{\Q}{\mathbb{Q/Z}}
\newcommand{\mathcolon}{\colon\,} 

\newcommand{\ch}{\textnormal{Ch}(R)}
\newcommand{\cha}[1]{\textnormal{Ch}(\mathcal{#1})}
\newcommand{\rmod}{R\text{-Mod}}

\newcommand{\tilclass}[1]{\widetilde{\class{#1}}}
\newcommand{\dgclass}[1]{dg\widetilde{\class{#1}}}

\newcommand{\dwclass}[1]{dw\widetilde{\class{#1}}}

\newcommand{\rightperp}[1]{#1^{\perp}}
\newcommand{\leftperp}[1]{{}^\perp #1}

\newcommand{\homcomplex}{\mathit{Hom}}

\newcommand{\barAF}{_{A_{\! X}\! }{\widetilde{\mathcal F}}}
\newcommand{\barAFac}{_{A_{\! X}\!\textrm{-ac} }{\widetilde{\mathcal F}}}

\def\Qco{\mathfrak{Qco}}

\begin{document}

\title{The projective stable category of a coherent scheme}

\author{Sergio Estrada}
\address{Departamento de Matem\'aticas \\
         Universidad de Murcia \\
         Campus de Espinardo \\
         Murcia, Spain 30100}
\email[Sergio Estrada]{sestrada@um.es}

\author{James Gillespie}
\address{Ramapo College of New Jersey \\
         School of Theoretical and Applied Science \\
         505 Ramapo Valley Road \\
         Mahwah, NJ 07430}
\email[Jim Gillespie]{jgillesp@ramapo.edu}
\urladdr{http://pages.ramapo.edu/~jgillesp/}

\thanks{The authors are grateful to the Institut Henri Poincar\'e for support
through the \emph{Research in Paris} program.}
\date{\today}

\begin{abstract}
We define the projective stable category of a coherent scheme. It is the homotopy category of an abelian model structure on the category of unbounded chain complexes of quasi-coherent sheaves. We study the cofibrant objects of this model structure, which are certain complexes of flat quasi-coherent sheaves satisfying a special acyclicity condition.
\end{abstract}

\maketitle

\section{introduction}
Let $R$ be a ring and $R$-Mod the category of left $R$-modules. The projective stable module category of $R$ was introduced in~\cite{bravo-gillespie-hovey}. The construction provides a triangulated category $\cat{S}_{prj}$ and a product-preserving functor $\gamma \mathcolon R\text{-Mod} \xrightarrow{} \cat{S}_{prj}$ taking short exact sequences to exact triangles, and kills all injective and projective modules (but typically will kill more than just these modules). The motivation for this paper is to extend this construction to schemes, that is, to replace $R$ with a scheme $X$ and introduce the projective stable (quasi-coherent sheaf) category of a scheme $X$. Although we don't yet understand the situation in full generality, this paper does make significant progress towards this goal.

Let us back up and explain the projective stable module category of a ring $R$. The idea is based on a familiar concept in Gorenstein homological algebra, that of a totally acyclic complex of projectives.
These are exact complexes $P$ of projective $R$-modules such that $\Hom_R(P,Q)$ is also exact for all projective $R$-modules $Q$. Such complexes historically arose in group cohomology theory, since the Tate cohomology groups are defined using totally acyclic complexes of projectives. The essentials of the theory hold for  Noetherian rings $R$, assuming a hypothesis on the class of flat modules (which is always satisfied in the case that $R$ has a dualizing complex).
A key idea from~\cite{bravo-gillespie-hovey} is that if we drop this last hypothesis then the theory still works for a general Noetherian ring, and in fact for \emph{all} rings, at the cost of replacing the totally acyclic complexes of projectives with the stronger notion of firmly acyclic complexes of projectives. For any ring $R$, an exact complex $P$ of projectives is called \emph{firmly acyclic} if $\Hom_R(P,F)$ is also exact for all level $R$-modules $F$. Level modules are defined in Section~\ref{sec-preliminaries}, but they are nothing more than flat modules whenever $R$ is a coherent ring, and in particular when $R$ is a Noetherian ring. The firmly acyclic complexes of projectives appear to have first arisen in the work of Nanqing Ding and coauthors. For example, they appear explicitly in~\cite{ding and mao 08}. The projective stable module category, $\cat{S}_{prj}$, is equivalent to $K_{fir}(Proj)$, the chain homotopy category of all firmly acyclic complexes of projectives.

Now in~\cite{bravo-gillespie-hovey}, $K_{fir}(Proj)$ was constructed as the homotopy category of a cofibrantly generated abelian model stucture on $\ch$, the category of chain complexes of $R$-modules. The cofibrant objects in this model structure are precisely the firmly acyclic complexes of projectives. The first main point made in this paper, formally stated in Corollary~\ref{cor-exact-AC-flat-model},  is that $K_{fir}(Proj)$ is the homotopy category of another cofibrantly generated abelian model structure on $\ch$.  The cofibrant objects in this model structure are exact complexes $F$ of flat modules that are \emph{AC-acyclic} in the sense that $A \tensor_R F$ remains exact for all absolutely clean modules $A$. Absolutely clean modules are also defined in Section~\ref{sec-preliminaries}, but they are nothing more than absolutely pure modules (also called FP-injective modules) whenever $R$ is a coherent ring. We call this model structure the \emph{exact AC-acyclic flat model structure}, and we think of it as a ``flat model'' for $K_{fir}(Proj)$; see Corollary~\ref{cor-flat-model}.

The existence of the exact AC-acyclic model structure indicates that, in the spirit of~\cite{gillespie},~\cite{gillespie-sheaves},~\cite{gillespie-quasi-coherent},~\cite{murfet-thesis},~\cite{neeman-flat}, and~\cite{murfet-salarian}, the category $K_{fir}(Proj)$ might extend to non-affine schemes $X$. The theme in each of these papers is to introduce some triangulated category based on flat sheaves to replace some hole left by the fact that there are not enough projectives. So to proceed, for a scheme $X$ with structure sheaf $\mathscr{R}$, we define an exact chain complex $\mathscr{F}_{\bullet}$ of flat quasi-coherent sheaves to be \emph{AC-acyclic} if $\mathscr{F}_{\bullet}(U)$ is an AC-acyclic complex for every open affine $U$ of $X$; that is, if $A_U \otimes_{\mathscr{R}(U)}\mathscr{F}_{\bullet}(U)$ is exact for every absolutely clean $\mathscr{R}(U)$-module $A_U$. Assuming that $X$ is quasi-compact and semi-separated, Corollary~\ref{cor-exact-AC-flat-model-sheaf} shows that the exact AC-acyclic flat model structure does exist on complexes of quasi-coherent sheaves and recovers $K_{fir}(Proj)$ in the affine case $X = \text{Spec}\,R$. Of course one also wants it to agree under their hypotheses, and it does, with Murfet and Salarian's category $D_{tac}(Flat X)$ from~\cite[Definition~4.12]{murfet-salarian}. This was the first paper to consider these topics, focusing instead on extending $K_{tac}(Proj)$, the chain homotopy category of totally acyclic complexes of projectives.

But to be a good notion for complexes of flat sheaves, one wants the notion of AC-acyclicity to be a \emph{Zariski-local} property; that is, a property that can be checked by using any open affine cover of $X$. Unfortunately, we do not know whether or not the notion of AC-acyclic complexes of flats is always a Zariski-local property. However, using recent work in~\cite{chris-estrada-iacob}, we can argue that it is indeed a Zariski-local property whenever the underlying scheme $X$ is locally coherent. This just means that the structure sheaf $\mathscr{R}$ satisfies $\mathscr{R}(U)$ is a coherent ring for each open affine $U \subseteq X$. Assuming $X$ is also semi-separated, the following summarizes the nice properties of AC-acyclic complexes of flats.

\begin{theorem}\label{them-zariski-local}
Let $X$ be a semi-separated and locally coherent scheme with structure sheaf $\mathscr{R}$. Let $\mathscr{F}_{\bullet}$ be a chain complex of flat quasi-coherent sheaves. Then the following are equivalent:
\begin{enumerate}
\item $\mathscr{A} \otimes \mathscr{F}_{\bullet}$ is exact for all locally absolutely pure quasi-coherent sheaves $\mathscr{A}$; that is, all $\mathscr{A}$ such that  $\mathscr{A}(U)$ is an absolutely pure $\mathscr{R}(U)$-module for every open affine $U \subseteq X$, or, just for all $U$ in some open affine cover of $X$.
\item  $A \otimes_{\mathscr{R}(U)} \mathscr{F}_{\bullet}(U)$ is exact for all open affine $U \subseteq X$, or just for all $U$ in some open affine cover of $X$,  and absolutely pure $\mathscr{R}(U)$-modules $A$.
\item  $I \otimes_{\mathscr{R}(U)} \mathscr{F}_{\bullet}(U)$ is exact for all open affine $U \subseteq X$, or just for all $U$ in some open affine cover of $X$, and injective $\mathscr{R}(U)$-modules $I$.
\end{enumerate}
\end{theorem}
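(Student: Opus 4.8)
The plan is to move back and forth between the global tensor $\mathscr{A} \otimes \mathscr{F}_{\bullet}$ and its sections over affine opens, to reduce the comparison of the three test classes to a question over the individual coherent rings $\mathscr{R}(U)$, and only then to confront the genuinely scheme-theoretic point, namely that the condition may be checked on a single affine cover. First I would record the local formula: for any quasi-coherent sheaf $\mathscr{A}$ and any open affine $U \subseteq X$ one has $(\mathscr{A} \otimes \mathscr{F}_{\bullet})(U) \cong \mathscr{A}(U) \otimes_{\mathscr{R}(U)} \mathscr{F}_{\bullet}(U)$, since the tensor of quasi-coherent sheaves restricts on an affine to the tensor of the modules of sections. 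Because exactness of a complex of quasi-coherent sheaves can be tested on any open affine cover (the global sections functor is exact on an affine), condition $(1)$ with the quantifier ``for every open affine $U$'' becomes literally the statement that $\mathscr{A}(U) \otimes_{\mathscr{R}(U)} \mathscr{F}_{\bullet}(U)$ is exact for every $U$ and every locally absolutely pure $\mathscr{A}$. With this dictionary the implication $(2) \Rightarrow (3)$ is immediate, as injective modules are absolutely pure; moreover, over the coherent ring $\mathscr{R}(U)$ the absolutely pure modules coincide with the absolutely clean ones, so $(2)$ is exactly the affine AC-acyclicity condition of the earlier sections.

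Next I would upgrade the test class from injectives back up to all absolutely pure modules over a fixed coherent ring, that is, prove $(3) \Rightarrow (2)$ locally. Fix an open affine $U$, set $R = \mathscr{R}(U)$ and $F_{\bullet} = \mathscr{F}_{\bullet}(U)$, a complex of flat $R$-modules, and take $A$ absolutely pure. Since $R$ is coherent, the class of absolutely pure modules is closed under cosyzygies, so in an injective coresolution $0 \to A \to E^0 \to E^1 \to \cdots$ every image $Z^i = \im(E^{i-1} \to E^i)$ is again absolutely pure. As each $F_j$ is flat, tensoring the short exact sequences $0 \to Z^i \to E^i \to Z^{i+1} \to 0$ by $F_{\bullet}$ yields short exact sequences of complexes whose middle term $E^i \otimes_R F_{\bullet}$ is exact by $(3)$. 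Assembling these into the double complex $E^i \otimes_R F_j$ (which lies in a half-plane, $i \geq 0$), the rows $E^i \otimes_R F_{\bullet}$ are exact by hypothesis, while the augmented columns $0 \to A \otimes_R F_j \to E^0 \otimes_R F_j \to \cdots$ are exact by flatness; a standard spectral-sequence argument, of the sort carried out in \cite{chris-estrada-iacob}, then forces $A \otimes_R F_{\bullet}$ to be exact. Coherence of $R$ is what makes the cosyzygies absolutely pure, so it is essential here. Together with the local dictionary this gives $(2) \Leftrightarrow (3)$, with the two quantifiers (``all $U$'' and ``some cover'') matched on both sides.

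It then remains to tie $(2)$ and $(3)$, formulated over a \emph{single} affine cover, to $(1)$, formulated with \emph{global} locally absolutely pure sheaves. For $(1) \Rightarrow (2)$ I would realize absolutely pure modules by global sheaves: given an absolutely pure $\mathscr{R}(U)$-module $A$, let $j \colon U \hookrightarrow X$ be the inclusion, which is an affine morphism because $X$ is semi-separated, and form $j_{*}\widetilde{A}$. Using that intersections $U \cap V$ of affine opens are affine and that $\mathscr{R}(U) \to \mathscr{R}(U \cap V)$ is flat, one computes $(j_{*}\widetilde{A})(V) \cong A \otimes_{\mathscr{R}(U)} \mathscr{R}(U \cap V)$ and checks that this is absolutely pure over $\mathscr{R}(V)$, so that $j_{*}\widetilde{A}$ is locally absolutely pure and restricts to $A$ on $U$; applying $(1)$ to these sheaves yields $(2)$. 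Conversely, $(3)$ over a single cover, run through $(2) \Leftrightarrow (3)$ and the local formula, gives exactness of $\mathscr{A} \otimes \mathscr{F}_{\bullet}$ on that cover, hence globally, for every locally absolutely pure $\mathscr{A}$, which is $(1)$.

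The main obstacle is precisely the Zariski descent packaged into the last paragraph: that absolute purity of modules, and with it the tensor-exactness condition, is both preserved and reflected under the flat restriction and pushforward maps between overlapping affine opens, so that testing on one cover is the same as testing on all affine opens. This is not formal, since absolute purity is not a local property of modules in general, and it is exactly here that the locally coherent hypothesis and the results of \cite{chris-estrada-iacob} are indispensable; outside the coherent setting the equivalences are expected to fail.
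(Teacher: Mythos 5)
Your overall architecture matches the paper's: the affine dictionary $(\mathscr{A}\otimes\mathscr{F}_{\bullet})(U)\cong\mathscr{A}(U)\otimes_{\mathscr{R}(U)}\mathscr{F}_{\bullet}(U)$ together with testing exactness on affines, the pushforward $j_{*}\widetilde{A}$ along the affine inclusion of a member of a semi-separating basis to deduce $(1)\Rightarrow(2)$ (this is Lemma~\ref{lemma-4}, resting on Lemmas~\ref{lemma-1}--\ref{lemma-3}), and the appeal to~\cite{chris-estrada-iacob} and~\cite{EEO-pure} for the single-cover refinements (Corollary~\ref{cor-zariski-local}). The one place you genuinely diverge is the local implication $(3)\Rightarrow(2)$, and there your argument has a gap. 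The double complex $E^{i}\otimes_{R}F_{j}$ lives in a half-plane ($i\geq 0$) but is unbounded in the $j$-direction, since $F_{\bullet}$ is an unbounded complex of flats. The row filtration of its direct-sum total complex is therefore not bounded below in any total degree, so the spectral sequence whose $E_{1}$-page vanishes need not converge to the homology of the total complex. The elementary shadow of the same problem: the short exact sequences $0\to Z^{k}\to E^{k}\to Z^{k+1}\to 0$ give only the dimension shift $H_{n}(A\otimes_{R}F_{\bullet})\cong H_{n+k}(Z^{k}\otimes_{R}F_{\bullet})$ for every $k$, which never terminates when $F_{\bullet}$ is unbounded above. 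So ``a standard spectral-sequence argument'' does not finish as stated; it would finish only for bounded-above $F_{\bullet}$.

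The repair is both shorter and needs less, and is the paper's Lemma~\ref{lemma-AP-acyclic}, valid over an \emph{arbitrary} ring. Since $A$ is absolutely pure, the single embedding $A\hookrightarrow E^{0}$ is already a pure monomorphism, so $A\otimes_{R}M\to E^{0}\otimes_{R}M$ is injective for every $M$. By the criterion of Lemma~\ref{lemma-A-acyclic}(3), exactness of $A\otimes_{R}F_{\bullet}$ amounts to each $A\otimes_{R}(F_{n+1}/B_{n+1}F)\to A\otimes_{R}F_{n}$ being a monomorphism, and this drops out of one commutative square using that $E^{0}\otimes_{R}F_{\bullet}$ is exact. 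In particular, coherence is not what makes $(3)\Rightarrow(2)$ work over a single ring, contrary to your closing remark; coherence enters only to identify absolutely pure with absolutely clean modules and, via~\cite{chris-estrada-iacob} and~\cite{EEO-pure}, to make all three conditions testable on a single affine cover.
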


We note that the above also says that, in the coherent case, $\mathscr{F}_{\bullet}$ is \emph{$\mathbf{F}$-totally acyclic} in the sense of~\cite{chris-estrada-iacob} if and only if it is both exact and AC-acyclic. In turn, this notion agrees with the original definition given in~\cite{murfet-salarian} assuming $X$ is locally noetherian.

\begin{proof}
See Definition~\ref{def-AC-sheaves}, Theorem~\ref{them-AC-characterization} and Corollary~\ref{cor-zariski-local}.
\end{proof}

In light of the above we therefore see the following theorem and its corollary as the main results of the paper. A note on language: we call a scheme \emph{coherent} if it is both quasi-compact and locally coherent. A quasi-coherent sheaf $\mathscr{F}$ is called \emph{Gorenstein flat} if $\mathscr{F} = Z_0\mathscr{F}_{\bullet}$ for some $\mathbf{F}$-totally acyclic complex $\mathscr{F}_{\bullet}$. Also, assuming $R$ is a coherent ring, a \emph{Ding projective} module refers to a module $M$ such that $M = Z_0P$ for some exact complex of projectives that is firmly acyclic.

\begin{theorem}\label{them-coherent-scheme-model}
Let $X$ be a semi-separated coherent scheme and
let ${}_A\tilclass{F}$ denote the class of all exact AC-acyclic (equivalently, $F$-totally acyclic) complexes of flat quasi-coherent sheaves. Then there is a cofibrantly generated abelian model structure, the \textbf{exact AC-acyclic flat model structure} on $\textnormal{Ch}(\Qco(X))$, for which ${}_A\tilclass{F}$ is the class of cofibrant objets. The trivially cofibrant objects are precisely the categorically flat chain complexes. In other words, the cofibrant (resp. trivially cofibrant) complexes are precisely the exact complexes of flats having all cycles Gorenstein flat (resp. categorically flat). If $X = \textnormal{Spec}\,R$ is an affine scheme, then the homotopy category of this model structure is equivalent to the homotopy category of all Ding projective modules.
\end{theorem}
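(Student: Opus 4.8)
The plan is to realize the stated model structure as a Hovey triple built from two compatible complete cotorsion pairs on the Grothendieck category $\textnormal{Ch}(\Qco(X))$, extracting both completeness and cofibrant generation from the deconstructibility of the left-hand classes. On the one hand I would construct the cotorsion pair $({}_A\tilclass{F}, \rightperp{{}_A\tilclass{F}})$ whose left class is the proposed cofibrant class; on the other hand I would use the flat cotorsion pair $(\mathcal{V}, \rightperp{\mathcal{V}})$ on $\textnormal{Ch}(\Qco(X))$, whose left class $\mathcal{V}$ is the categorically flat complexes, as the proposed class of trivially cofibrant objects. Gillespie's procedure for manufacturing a Hovey triple from two cotorsion pairs then applies as soon as one checks the containment $\mathcal{V} \subseteq {}_A\tilclass{F}$ together with the equality of the two cores; it produces a unique thick class $\mathcal{W}$ making $({}_A\tilclass{F}, \mathcal{W}, \rightperp{\mathcal{V}})$ a Hovey triple with ${}_A\tilclass{F} \cap \mathcal{W} = \mathcal{V}$, which is exactly the assertion that the trivially cofibrant objects are the categorically flat complexes.

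The containment is immediate from Theorem~\ref{them-zariski-local}: a categorically flat complex $\mathscr{F}_\bullet$ is an exact complex of flats for which $\mathscr{A} \otimes \mathscr{F}_\bullet$ is exact for every quasi-coherent $\mathscr{A}$, in particular for every locally absolutely pure $\mathscr{A}$, so $\mathscr{F}_\bullet \in {}_A\tilclass{F}$. The description of the cofibrant objects as the exact complexes of flats with Gorenstein flat cycles also rests on Theorem~\ref{them-zariski-local}. In one direction, every cycle of an object of ${}_A\tilclass{F}$ is by definition a cycle of an $F$-totally acyclic complex, hence Gorenstein flat. In the other direction, for an exact complex of flats $\mathscr{F}_\bullet$ the homology of $\mathscr{A} \otimes \mathscr{F}_\bullet$ at each degree is computed, affine-locally, by a single group $\Tor_1(\mathscr{A}, Z_n\mathscr{F}_\bullet)$; over a coherent ring this vanishes whenever the cycles are Gorenstein flat and $\mathscr{A}$ is (locally) absolutely pure, so such a complex is $F$-totally acyclic and lies in ${}_A\tilclass{F}$.

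The substantive step, which I expect to be the main obstacle, is the deconstructibility of ${}_A\tilclass{F}$: closure under transfinite extensions and generation, as a class of such extensions, by a single set. Closure under extensions is the easy half. Given a short exact sequence of complexes with outer terms in ${}_A\tilclass{F}$, the middle term is again a complex of flats (flatness being extension-closed), and tensoring with a locally absolutely pure $\mathscr{A}$ keeps the sequence short exact because the quotient has flat components and so contributes no $\Tor_1$; exactness of the middle then follows from exactness of the outer terms by the two-out-of-three property, and the same colimit-stability handles arbitrary transfinite extensions. The delicate half is exhibiting a set $\mathcal{S} \subseteq {}_A\tilclass{F}$ through which every object of ${}_A\tilclass{F}$ is filtered. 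This is exactly where the hypotheses enter: quasi-compactness and semi-separatedness reduce all computations to a fixed finite affine cover, local coherence makes absolutely pure agree with absolutely clean and flat agree with level on each affine piece, and the Zariski-local property of Theorem~\ref{them-zariski-local} guarantees that exact AC-acyclicity is detected and preserved along that cover. I would then run a Hill-lemma type argument, filtering an arbitrary object of ${}_A\tilclass{F}$ by cardinality-bounded quasi-coherent subcomplexes that are themselves exact AC-acyclic complexes of flats, using the known deconstructibility of flat quasi-coherent sheaves on a coherent scheme. With such a set in hand, the Eklof--Trlifaj theorem in its form for Grothendieck categories yields that $({}_A\tilclass{F}, \rightperp{{}_A\tilclass{F}})$ is a functorially complete, cofibrantly generated cotorsion pair, while $(\mathcal{V}, \rightperp{\mathcal{V}})$ is complete and cofibrantly generated by the established theory of flat quasi-coherent sheaves and their complexes.

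It then remains to verify the core equality ${}_A\tilclass{F} \cap \rightperp{{}_A\tilclass{F}} = \mathcal{V} \cap \rightperp{\mathcal{V}}$ demanded by Gillespie's theorem, and to identify the affine case. Both cores should be identified with the complexes that are simultaneously categorically flat and cotorsion: an object that is exact AC-acyclic-flat and lies in $\rightperp{{}_A\tilclass{F}}$ is in particular orthogonal to every categorically flat complex while having Gorenstein flat cycles, which forces those cycles to be flat -- the Gorenstein-flat analogue of the fact that a Gorenstein flat module orthogonal to all Gorenstein flats is flat -- and the reverse inclusion follows by dimension shifting along special $\mathcal{V}$-precovers. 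By Theorem~\ref{them-zariski-local} each of these checks is affine-local and hence reduces to the coherent-ring statements underlying Corollary~\ref{cor-exact-AC-flat-model}. Finally, when $X = \textnormal{Spec}\,R$ the equivalence $\Qco(X) \simeq \rmod$ identifies $\textnormal{Ch}(\Qco(X))$ with $\ch$ and carries the constructed model structure to the exact AC-acyclic flat model structure of Corollary~\ref{cor-exact-AC-flat-model}; its homotopy category is $K_{fir}(Proj)$, the stable category of the Ding projective modules, which gives the last claim.
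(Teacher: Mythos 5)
Your overall architecture is the same as the paper's: build the Hovey triple from the two complete hereditary cotorsion pairs $(\tilclass{F},\dgclass{C})$ and $({}_A\tilclass{F},\rightperp{{}_A\tilclass{F}})$ on $\textnormal{Ch}(\Qco(X))$, check the containment $\tilclass{F}\subseteq{}_A\tilclass{F}$ and the equality of cores, and invoke the theorem of~\cite{gillespie-hovey triples}; the paper's proof is exactly Corollary~\ref{cor-exact-AC-flat-model-sheaf}, i.e.\ Theorem~\ref{thm-how to create flat on sheaf} specialized via Setup~\ref{setup-choice of A}, combined with Theorem~\ref{them-AC-characterization} and Corollary~\ref{cor-zariski-local} for the Gorenstein-flat description. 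The two technical inputs are handled differently, though. For deconstructibility the paper does not run a Hill-lemma filtration on flat sheaves: it shows ${}_A\tilclass{F}$ is closed under pure subcomplexes and pure quotients (checked sectionwise via Theorem~\ref{them-deconstructibility of tensor acyclic complexes}) and then filters an arbitrary object by $\gamma$-presentable pure subcomplexes using the $\lambda$-purity machinery of Ad\'amek--Rosick\'y (Lemma~\ref{pure.subcomp.sheaf}, Proposition~\ref{prop-transfinite-extens-sheaf}). Your sketch correctly isolates this as the crux but does not actually produce the filtration; the purity argument is the concrete mechanism that does. For the core equality the paper's route is also different and cleaner: $\rightperp{{}_A\tilclass{F}}$ (resp.\ $\dgclass{C}$) consists of complexes of cotorsion objects receiving only null-homotopic maps from ${}_A\tilclass{F}$ (resp.\ $\tilclass{F}$), so any $X$ in either core satisfies $1_X\sim 0$ and both cores are the contractible complexes of flat cotorsion sheaves.

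Two steps of your argument are genuinely under-justified as written. First, in the core computation, orthogonality to every categorically flat complex only places $X$ in $\dgclass{C}$ and does not force the cycles to be flat; to apply your module-level fact that a Gorenstein flat object in the right orthogonal of the Gorenstein flats is flat, you would need a bridging lemma passing from $X\in\rightperp{{}_A\tilclass{F}}$ to $Z_nX\in\rightperp{\class{GF}}$, which you do not supply (the contractibility argument sidesteps this entirely). Second, the final claim is not merely $\textnormal{Ho}(\class{M})\cong K_{fir}(Proj)$, which is what your reduction to Corollary~\ref{cor-exact-AC-flat-model} yields; identifying $K_{fir}(Proj)$ with the homotopy category of the Ding projective modules is the content of Section~\ref{sec-coherent case} (Theorem~\ref{them-diagram}), resting on the Quillen equivalence $X\mapsto X_0/B_0X$ and Propositions~\ref{prop-trivial-objects} and~\ref{prop-equiv}, and you assert it rather than prove it.
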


The last sentence answers a question left open at the end of~\cite{gillespie-flat stable}. Section~\ref{sec-coherent case} is devoted to making this equivalence abundantly clear. It means that, for a semi-separated coherent scheme $X$, the category $\cat{S}_{prj}$ we seek should be defined as the homotopy category of the exact AC-acyclic flat model structure.
For the proof of Theorem~\ref{them-coherent-scheme-model}, see Corollary~\ref{cor-exact-AC-flat-model-sheaf} and the paragraph that follows it.

At the end of the paper we use recent methods from the theory of abelian model categories to easily obtain a localization sequence; see Definition~\ref{def-localization sequence}, Corollary~\ref{cor-localization} and the remark following it. To briefly describe it, let $\class{M}_2$ be the model structure from the above Theorem~\ref{them-coherent-scheme-model}. We also have an abelian model structure,  $\class{M}_1$, whose homotopy category is equivalent to $\class{D}(Flat X)$, the derived category of flat sheaves from~\cite{murfet-salarian}. (This category was initially denoted $K_m(Proj X)$ and called the \emph{mock homotopy category of projectives} in~\cite{murfet-thesis}.)
Using ideas from~\cite{becker, gillespie-mock projectives}, we obtain a third model structure $\class{M}_2\backslash \class{M}_1$, called the \emph{left localization of $\class{M}_1$ by $\class{M}_2$}.  Then we have the following localization sequence recovering~\cite[Prop.~5.1]{murfet-salarian} under their noetherian hypothesis.

\begin{corollary}\label{cor-localization-sequence}
Continuing Theorem~\ref{them-coherent-scheme-model}, there is a localization sequence
\[
\begin{tikzpicture}[node distance=3.5 cm, auto]
\node (A)  {$\textnormal{Ho}(\class{M}_2)$};
\node (B) [right of=A] {$\textnormal{Ho}(\class{M}_1)$};
\node (C) [right of=B] {$\textnormal{Ho}(\class{M}_2\backslash\class{M}_1)$};
%
%
\draw[->] (A.10) to node {$L(\textnormal{Id})$} (B.170);
\draw[<-] (A.350) to node [swap] {$R(\textnormal{Id})$} (B.190);
\draw[->] (B.10) to node {$L(\textnormal{Id})$} (C.173);
\draw[<-] (B.350) to node [swap] {$R(\textnormal{Id})$} (C.187);
\end{tikzpicture}
\]
where $L(\textnormal{Id})$ and $R(\textnormal{Id})$ are left and right derived identity functors on $\textnormal{Ch}(\Qco(X))$ and $\class{M}_2 \backslash \class{M}_1$ is the \emph{left localization of $\class{M}_1$ by $\class{M}_2$}.
\end{corollary}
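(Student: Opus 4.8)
The plan is to obtain the localization sequence as a formal consequence of the general machinery for building left localizations of abelian model structures, as developed in~\cite{becker} and~\cite{gillespie-mock projectives}. The essential input is that both $\class{M}_1$ and $\class{M}_2$ are cofibrantly generated abelian model structures on the \emph{same} ambient category $\textnormal{Ch}(\Qco(X))$, and that they share the same class of fibrant objects (equivalently, the same class of trivially fibrant objects, so that the right-hand sides of the two model structures coincide). Indeed, $\class{M}_1$ presents $\class{D}(Flat X)$, built from \emph{all} exact complexes of flats, while $\class{M}_2$ from Theorem~\ref{them-coherent-scheme-model} is built from the subclass of exact AC-acyclic (equivalently $\mathbf{F}$-totally acyclic) complexes of flats; the cofibrant class of $\class{M}_2$ sits inside that of $\class{M}_1$, and the two fibrant structures agree. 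First I would verify precisely this nesting: the trivially cofibrant objects agree (both are the categorically flat complexes), and the cofibrant class $_A\tilclass{F}$ of $\class{M}_2$ is contained in the cofibrant class of $\class{M}_1$. This is exactly the configuration under which the cited method produces a third model structure.

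Next I would invoke the construction of the left localization $\class{M}_2 \backslash \class{M}_1$. The idea, following~\cite{becker} and~\cite{gillespie-mock projectives}, is that when two model structures on the same category have the same fibrant objects and nested cofibrant classes, one forms a new cotorsion pair (and hence, via Hovey's correspondence, a new abelian model structure) whose cofibrant objects are those $X$ in the larger cofibrant class that are right orthogonal, in the appropriate $\textnormal{Ext}$ sense, to the smaller trivially cofibrant class relative to the intermediate classes. The output $\class{M}_2\backslash\class{M}_1$ has the feature that the identity functor on $\textnormal{Ch}(\Qco(X))$ becomes a left Quillen functor $\class{M}_1 \to \class{M}_2\backslash\class{M}_1$ and a right Quillen functor $\class{M}_2 \to \class{M}_1$, which is precisely what assembles the two adjoint pairs in the displayed diagram. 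I would check that the hereditary and cofibrant-generation hypotheses needed to apply this construction hold here, which follows because all three model structures are hereditary abelian (the cotorsion pairs are cogenerated by sets, as in the cofibrantly generated setting of the earlier corollaries).

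The substance then reduces to confirming that the resulting diagram of derived identity functors is genuinely a \emph{localization sequence} in the sense of Definition~\ref{def-localization sequence}: that $R(\textnormal{Id})$ is fully faithful with essential image the kernel of the next $L(\textnormal{Id})$, and dually at each stage, so that $\textnormal{Ho}(\class{M}_2)$ is the kernel of $\textnormal{Ho}(\class{M}_1) \to \textnormal{Ho}(\class{M}_2\backslash\class{M}_1)$ and $\textnormal{Ho}(\class{M}_2\backslash\class{M}_1)$ is the corresponding Verdier quotient. This is where the abstract theory of~\cite{becker, gillespie-mock projectives} does the real work: a left localization of one abelian model structure by another, sharing fibrant objects, automatically yields such a localization (recollement-type) sequence on homotopy categories. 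I expect the main obstacle to be purely bookkeeping rather than conceptual, namely matching the orientation conventions (left versus right localization, and which model structure plays the role of the ``acyclics'') so that the triangulated kernel and quotient land on the correct sides, and then identifying $\textnormal{Ho}(\class{M}_1)$ with $\class{D}(Flat X)$ and verifying that under the noetherian hypothesis this recovers~\cite[Prop.~5.1]{murfet-salarian}. Once the three model structures are in place with their shared fibrant class, the localization sequence follows formally.
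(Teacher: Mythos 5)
Your proposal follows essentially the same route as the paper: both deduce the statement from the left-localization machinery of Becker and of~\cite{gillespie-mock projectives} (the dual of Prop.~3.2 there), applied to $\class{M}_1$ and $\class{M}_2$ viewed as hereditary abelian model structures on $\textnormal{Ch}(\Qco(X))$ that share the fibrant class $\dgclass{C}$ and the core, with the cofibrant class of $\class{M}_2$ nested inside that of $\class{M}_1$. Two small corrections that do not affect the argument: the cofibrant objects of $\class{M}_1$ are \emph{all} complexes of flat sheaves $\dwclass{F}$ (not the exact ones), and what the two model structures share is the class of \emph{trivially cofibrant} objects $\tilclass{F}$ (the categorically flat complexes), not the class of trivially fibrant objects.
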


After a preliminary Section~\ref{sec-preliminaries}, Sections~\ref{sec-acyclicity-modules} and~\ref{sec-AC-acyclic flat model on ch} are devoted to a clear understanding of the affine case. We work over a not necessarily commutative ring with identity. In~\cite{bravo-gillespie-hovey}, there appears some very general constructions of ``injective'' and ``projective'' model structures on $\ch$, leading up to the injective and projective stable module categories of a ring $R$. Section~\ref{sec-AC-acyclic flat model on ch} is the ``flat'' analog providing models that are Quillen equivalent to the corresponding projective model structures built in~\cite{bravo-gillespie-hovey}. This lays the groundwork for Sections~\ref{sec-sheaves} through~\ref{sec-AC-acyclic flat model on sheaf} which build the analogous model structures on complexes of quasi-coherent sheaves. The coherent assumption makes things interesting even in the case of modules over a ring $R$. This is discussed in detail in Section~\ref{sec-coherent case}.

\section{preliminaries}\label{sec-preliminaries}

Throughout the paper $R$ will denote a ring.   The category of (left) $R$-modules will be denoted by $R$-Mod. Starting in Section~\ref{sec-sheaves} the reader is assumed to have a basic understanding of (quasi-coherent) sheaves. Our principal concern is the construction of some hereditary abelian model structures on (unbounded) chain complexes of modules and sheaves, resulting in stable homotopy categories. Some particulars are recalled in this section.

\subsection{Cotorsion pairs and abelian model structures}\label{subsec-abelian model cats}
Let $\cat{A}$ be an abelian category. By definition, a pair of classes $(\class{X},\class{Y})$ in $\cat{A}$ is called a \emph{cotorsion pair} if $\class{Y} = \rightperp{\class{X}}$ and $\class{X} = \leftperp{\class{Y}}$. Here, given a class of objects $\class{C}$ in $\cat{A}$, the right orthogonal  $\rightperp{\class{C}}$ is defined to be the class of all objects $X$ such that $\Ext^1_{\cat{A}}(C,X) = 0$ for all $C \in \class{C}$. Similarly, we define the left orthogonal $\leftperp{\class{C}}$. We call $\class{X} \cap \class{Y}$ the \emph{core} of the cotorsion pair, and we call a cotorsion pair \emph{hereditary} if $\Ext^i_{\cat{A}}(X,Y) = 0$ for all $X \in \class{X}$, $Y \in \class{Y}$, and $i \geq 1$. The cotorsion pair is \emph{complete} if it has enough injectives and enough projectives. This means that for each $A \in \cat{A}$ there exist short exact sequences $0 \xrightarrow{} A \xrightarrow{} Y \xrightarrow{} X \xrightarrow{} 0$ and $0 \xrightarrow{} Y' \xrightarrow{} X' \xrightarrow{} A \xrightarrow{} 0$ with $X,X' \in \class{X}$ and $Y,Y' \in \class{Y}$.
Cotorsion pairs are fundamentally connected to the study of precovers and preenvelopes in relative homological algebra; we refer to the standard reference~\cite{enochs-jenda-book}.

Cotorsion pairs are also fundamentally connected to the theory of abelian model categories. The main theorem of~\cite{hovey} showed that an abelian model structure on $\cat{A}$ is equivalent to a triple $(\class{C},\class{W},\class{F})$ of classes of objects in $\cat{A}$ for which $\class{W}$ is thick and $(\class{C} \cap \class{W},\class{F})$ and $(\class{C},\class{W} \cap \class{F})$ are each complete cotorsion pairs. By \emph{thick} we mean that the class $\class{W}$ is closed under retracts and satisfies that whenever two out of three terms in a short exact sequence are in $\class{W}$ then so is the third. In this case, $\class{C}$ is precisely the class of cofibrant objects of the model structure, $\class{F}$ are precisely the fibrant objects, and $\class{W}$ is the class of trivial objects. We hence denote an abelian model structure by a triple $\class{M} = (\class{C},\class{W},\class{F})$ and sometimes we call it  a \emph{Hovey triple}. We say that $\class{M}$ is \emph{hereditary} if both of the associated cotorsion pairs are hereditary. Finally, by the \emph{core} of an abelian model structure $\class{M} = (\class{C},\class{W},\class{F})$ we mean the class $\class{C} \cap \class{W} \cap \class{F}$.

A recent result appearing in~\cite{gillespie-hovey triples} will prove fundamental to this paper. It says that whenever $(\tilclass{C},\class{F})$ and $(\class{C},\tilclass{F})$ are complete hereditary cotorsion pairs with equal cores and $\tilclass{F} \subseteq \class{F}$, then there is a unique thick class $\class{W}$ yielding a Hovey triple $\class{M} = (\class{C},\class{W},\class{F})$ with $\class{C} \cap \class{W} = \tilclass{C}$ and $\class{W} \cap \class{F} = \tilclass{F}$. Besides~\cite{hovey} we will refer to~\cite{hovey-model-categories} for any other basics from the theory of model categories.

Now suppose that our category $\cat{A}$ has enough projectives. Then by a \emph{projective model structure} we mean a Hovey triple $\class{M}  = (\class{C},  \class{W}, \class{A})$ where $\class{A}$ now denotes the class of all objects in the category. It is easy to see that such a model structure is equivalent to a cotorsion pair $(\class{C},\class{W})$ with $\class{W}$ thick and $\class{C} \cap \class{W}$ coinciding with the class of projectives. We call such a cotorsion pair a \emph{projective cotorsion pair}. On the other hand, we also have \emph{injective cotorsion pairs} and \emph{injective model structures} on categories with enough injectives. We will refer to~\cite{becker, gillespie-recollement} on these notions.

\subsection{Chain complexes on abelian categories}\label{subsec-complexes}
Again, let $\cat{A}$ be an abelian category. We denote the corresponding category of chain complexes by $\cha{A}$. In the case $\cat{A} = \rmod$, we denote it by $\ch$. Our convention is that the differentials of our chain complexes lower degree, so $\cdots
\xrightarrow{} X_{n+1} \xrightarrow{d_{n+1}} X_{n} \xrightarrow{d_n}
X_{n-1} \xrightarrow{} \cdots$ is a chain complex. We also have the chain homotopy category of $\cat{A}$, denoted $K(\cat{A})$. Its objects are also chain complexes but its morphisms are chain homotopy classes of chain maps.
Given a chain complex $X$, the
\emph{$n^{\text{th}}$ suspension of $X$}, denoted $\Sigma^n X$, is the complex given by
$(\Sigma^n X)_{k} = X_{k-n}$ and $(d_{\Sigma^n X})_{k} = (-1)^nd_{k-n}$.
For a given object $A \in \cat{A}$, we denote the \emph{$n$-disk on $A$} by $D^n(A)$. This is the complex consisting only of $A \xrightarrow{1_A} A$ concentrated in degrees $n$ and $n-1$, and 0 elsewhere. We denote the \emph{$n$-sphere on $A$} by $S^n(A)$, and this is the complex consisting only of $A$ in degree $n$ and 0 elsewhere.

\subsection{Localization sequences} We  briefly recall the definition of a localization sequence from~\cite{krause-stable derived cat of a Noetherian scheme}.

\begin{definition}\label{def-localization sequence}
Let $\class{T}' \xrightarrow{F} \class{T} \xrightarrow{G} \class{T}''$ be a sequence of exact functors between triangulated categories. We say it is a \emph{localization sequence} when there exists right adjoints $F_{\rho}$ and $G_{\rho}$ giving a diagram of functors as below with the listed properties.
$$\begin{tikzcd}
\class{T}'
\rar[to-,
to path={
([yshift=0.5ex]\tikztotarget.west) --
([yshift=0.5ex]\tikztostart.east) \tikztonodes}][swap]{F}
\rar[to-,
to path={
([yshift=-0.5ex]\tikztostart.east) --
([yshift=-0.5ex]\tikztotarget.west) \tikztonodes}][swap]{F_{\rho}}
& \class{T}
\rar[to-,
to path={
([yshift=0.5ex]\tikztotarget.west) --
([yshift=0.5ex]\tikztostart.east) \tikztonodes}][swap]{G}
\rar[to-,
to path={
([yshift=-0.5ex]\tikztostart.east) --
([yshift=-0.5ex]\tikztotarget.west) \tikztonodes}][swap]{G_{\rho}}
& \class{T}'' \\
\end{tikzcd}$$
\begin{enumerate}
\item The right adjoint $F_{\rho}$ of $F$ satisfies $F_{\rho} \circ F \cong \text{id}_{\class{T}'}$.
\item The right adjoint $G_{\rho}$ of $G$ satisfies $G \circ G_{\rho} \cong \text{id}_{\class{T}''}$.
\item For any object $X \in \class{T}$, we have $GX = 0$ iff $X \cong FX'$ for some $X' \in \class{T}'$.
\end{enumerate}
\end{definition}

\subsection{Absolutely clean and level modules}\label{subsec-absclean-level}

Now let $R$ be a ring.
We will often refer to~\cite{bravo-gillespie-hovey} for the theory of absolutely clean and level modules. We think of absolutely clean modules as a class of modules possessing the same properties as injective modules over Noetherian rings, and we think of level modules as a class of modules with the same properties as flat modules over coherent rings.

Briefly, note that an $R$-module $I$ is injective if and only if $\Ext^1_R(N,I) = 0$ for all finitely generated modules $N$. Over Noetherian rings, $N$ is finitely generated if and only if it is of type $FP_{\infty}$; that is, if $N$ has a projective resolution by finitely generated projective modules. We call $A$ \emph{absolutely clean} (or \emph{$FP_{\infty}$-injective}) if $\Ext^1_R(N,A) = 0$ for all modules $N$ of type $FP_{\infty}$.  For coherent rings,  absolutely clean modules coincide with the absolutely pure modules (also called FP-injective modules). The $FP_{\infty}$-modules have been studied by Livia Hummel in~\cite{miller-livia}.

On the other hand, a module $L$ is called \emph{level} if $\Tor_1^R(N,L) = 0$   for all (right) $R$-modules $N$ of type $FP_{\infty}$. For coherent rings, we have that the level modules coincide with the flat modules. A fundamental result is the character module duality: A left (resp. right) $R$-module $A$ is absolutely clean if and only if $A^+ = \Hom_{\Z}(A,\Q)$ is a level right (resp. left) $R$-module. Similarly, a left (resp. right) $R$-module $L$ is level if and only if $L^+ = \Hom_{\Z}(L,\Q)$ is an absolutely clean right (resp. left) $R$-module.

\subsection{Modified tensor product and Tor functors}\label{subsec-modified tensor and Tor}
Again $R$ is a ring. We denote by $X \overline{\otimes} Y$, the modified tensor product of chain complexes from~\cite{enochs-garcia-rozas} and~\cite{garcia-rozas}. This is the correct tensor product for characterizing flatness and purity in $\ch$. That is, a complex $F$ is a direct limit of finitely generated projective complexes if and only if $F \overline{\otimes} -$ is an exact functor. And, a short exact sequence $\class{E}$ of chain complexes is \emph{pure} in the sense of Definition~\ref{def-purity} if and only if $X \overline{\otimes} \class{E}$ remains exact for all complexes $X$ (of right $R$-modules). $\overline{\otimes}$ is defined in terms of the usual tensor product $\otimes$ of chain complexes as follows. Given a complex $X$ of right $R$-modules and a complex $Y$ of left $R$-modules, we define $X \overline{\otimes} Y$ to be the complex whose $n$-th entry is $(X \otimes Y)_n / B_n(X \otimes Y)$ with boundary map  $(X \otimes Y)_n / B_n(X \otimes Y) \rightarrow (X \otimes Y)_{n-1} / B_{n-1}(X \otimes Y)$ given by
\[
\overline{x \otimes y} \mapsto \overline{dx \otimes y}.
\]
This defines a complex and we get a bifunctor $ - \overline{\otimes} - $ which is right exact in each variable.  We refer the reader to~\cite{garcia-rozas} for more details.

Note that since $M \otimes -$ is right exact, given any complex $X$ we have, for all $n$, a right exact sequence
\[
  M \otimes_R X_{n+1} \to M \otimes_R X_n \to M \otimes_R \left( {X_n}/{B_nX} \right) \to 0.
\]
Therefore ${M \otimes_R X_n} / B_n(M \otimes_R X) \cong M \otimes_R \left( {X_n}/{B_nX} \right) $.

\section{Acyclicity for complexes of flat modules}\label{sec-acyclicity-modules}

Let $R$ be a ring. In this section we introduce and study the notion of $A$-acyclic and (exact) AC-acyclic complexes of flat modules. The main results, which we apply in the next section, are Theorem~\ref{them-deconstructibility of tensor acyclic complexes} and Corollary~\ref{cor-AC acyclic} which show that these complexes are part of complete hereditary cotorsion pairs in $\ch$.

We use purity methods to obtain the results. Purity in the category of chain complexes is defined the same way it is in any other locally finitely presented category.

\begin{definition}\label{def-purity}
A short exact sequence $\class{E} : 0 \xrightarrow{} P \xrightarrow{} X \xrightarrow{} Y \xrightarrow{} 0$ of chain complexes is called \textbf{pure} if $\Hom_{\ch}(F,\class{E})$ remains exact for any finitely presented chain complex $F$. In the same way we say $P \subseteq X$ is a \textbf{pure subcomplex} and $X/P \cong Y$ is a \textbf{pure quotient}.
\end{definition}

By definition, a chain complex $F$ is said to be \emph{finitely presented} if the functor $\Hom_{\ch}(F,-)$ preserves direct limits. But a simpler characterization is that these are precisely the complexes $F$ which are bounded above and below and with each $F_n$ a finitely presented $R$-module~\cite{garcia-rozas}. Using this fact, we prove the following useful lemma concerning properties of pure exact sequences of complexes.

\begin{lemma}\label{lemma-props of pure subcomplexes}
Let $\class{E} : 0 \xrightarrow{} P \xrightarrow{} X \xrightarrow{} Y \xrightarrow{} 0$ be a pure exact sequence of chain complexes. Then the following hold.
\begin{enumerate}
\item Each $0 \xrightarrow{} P_n \xrightarrow{} X_n \xrightarrow{} Y_n \xrightarrow{} 0$ is pure in $R$-Mod.
\item Each $0 \xrightarrow{} Z_nP \xrightarrow{} Z_nX \xrightarrow{} Z_nY \xrightarrow{} 0$ is pure in $R$-Mod.
\item Each $0 \xrightarrow{} P_n/B_nP \xrightarrow{} X_n/B_nX \xrightarrow{} Y_n/B_nY \xrightarrow{} 0$ is pure in $R$-Mod.
\item If $X$ is exact, then so is $P$ and $Y$. That is, the class of exact complexes is closed under pure subcomplexes and pure quotients.
\end{enumerate}
\end{lemma}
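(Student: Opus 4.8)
The plan is to prove (1)--(3) by testing $\class{E}$ against the two descriptions of purity available to us, and then to deduce (4) from (2) and (3) by a short diagram chase. The tools are: the defining criterion (Definition~\ref{def-purity}), that $\Hom_{\ch}(F,\class{E})$ stays exact for every finitely presented complex $F$; the tensor criterion recalled in Section~\ref{subsec-modified tensor and Tor}, that $W\,\overline{\otimes}\,\class{E}$ stays exact for every complex $W$ of right modules; the two adjunction isomorphisms
\[
\Hom_{\ch}(D^n(F),Y)\cong\Hom_R(F,Y_n),\qquad \Hom_{\ch}(S^n(F),Y)\cong\Hom_R(F,Z_nY);
\]
and the fact that $D^n(F)$ and $S^n(F)$ are finitely presented complexes whenever $F$ is a finitely presented module.

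For (1) I would apply $\Hom_{\ch}(D^n(F),-)$ to $\class{E}$: purity makes this exact, and the disk adjunction rewrites it as the surjectivity of $\Hom_R(F,X_n)\to\Hom_R(F,Y_n)$ for every finitely presented $F$. Since $0\to P_n\to X_n\to Y_n\to 0$ is already short exact, this is exactly its purity in $R$-Mod. For (2) I would repeat this with the spheres $S^n(F)$: the sphere adjunction now produces $\Hom_R(F,Z_n(-))$, so that the case $F=R$ gives surjectivity of $Z_nX\to Z_nY$ (hence short exactness, using left exactness of $Z_n$), and the general finitely presented $F$ gives purity.

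For (3) I would switch to the tensor criterion. Viewing a right module $N$ as the stalk complex $S^0(N)$, an inspection of the differential shows $S^0(N)\otimes X$ is simply the complex with entries $N\otimes_R X_k$; the isomorphism ${(N\otimes_R X_n)}/{B_n(N\otimes_R X)}\cong N\otimes_R(X_n/B_nX)$ recalled in Section~\ref{subsec-modified tensor and Tor} then identifies the degree-$n$ term of $S^0(N)\,\overline{\otimes}\,\class{E}$ with $N\otimes_R(-)$ applied to the cokernel sequence $0\to P_n/B_nP\to X_n/B_nX\to Y_n/B_nY\to 0$. Purity of $\class{E}$ forces $S^0(N)\,\overline{\otimes}\,\class{E}$ to be (degreewise) short exact, so taking $N=R$ gives short exactness of the cokernel sequence and letting $N$ vary over all right modules gives its purity. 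I expect (3) to be the crux: it needs the modified-tensor bookkeeping, and, in contrast with (1), the injectivity of $P_n/B_nP\to X_n/B_nX$ is a real consequence of purity rather than a formality --- and it is precisely this injectivity that drives (4).

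Finally, for (4) suppose $X$ is exact, so that the canonical map $\bar d_n\colon X_n/B_nX\to Z_{n-1}X$, $[x]\mapsto d_nx$, is an isomorphism for all $n$ (its kernel is $H_n(X)$ and its cokernel is $H_{n-1}(X)$). I would assemble $\bar d_n$ into the commutative ladder with top row the pure exact sequence of (3) and bottom row the pure exact sequence of (2) (at index $n-1$). Commutativity of the left-hand square, together with injectivity of $\bar d_n^X$ and of the top horizontal map, forces $\bar d_n^P$ to be injective, i.e. $H_n(P)=\ker\bar d_n^P=0$; dually, commutativity of the right-hand square together with surjectivity of $\bar d_n^X$ and of the bottom horizontal map forces $\bar d_n^Y$ to be surjective, i.e. $H_{n-1}(Y)=\cok\bar d_n^Y=0$. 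As this holds for all $n$, both $P$ and $Y$ are exact.
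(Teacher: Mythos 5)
Your proof is correct and follows essentially the same route as the paper: disks and spheres for (1)--(2), the modified tensor product applied to stalk complexes for (3), and an elementary diagram chase for (4). The only cosmetic difference is in (4), where the paper applies the snake lemma twice (to the ladders $Z_n(-) \hookrightarrow (-)_n$ and then $B_n(-) \hookrightarrow Z_n(-)$), whereas you chase injectivity/surjectivity through the factorization $X_n/B_nX \to Z_{n-1}X$ directly; the two chases are equivalent.
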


\begin{proof}
If $M$ is a finitely presented module, then $S^n(M)$ and $D^n(M)$ are finitely presented complexes. So both of $\Hom_{\ch}(S^n(M),\class{E})$ and $\Hom_{\ch}(D^n(M),\class{E})$ remain exact. Using the standard isomorphisms $$\Hom_{\ch}(S^n(M),X) \cong \Hom_{R}(M,Z_nX)$$ and $\Hom_{\ch}(D^n(M),X) \cong \Hom_R(M,X_n)$ we conclude that (1) and (2) hold. Note that in particular, $0 \xrightarrow{} Z_nP \xrightarrow{} Z_nX \xrightarrow{} Z_nY \xrightarrow{} 0$ must be a short exact sequence since $$\Hom_{\ch}(S^n(R),X) \cong \Hom_{R}(R,Z_nX) \cong Z_nX.$$

To prove (3), we use the modified tensor product of Enochs and Garc\'\i{}a-Rozas from Section~\ref{subsec-modified tensor and Tor}. If $M$ is a finitely presented (right) module, then again $S^0(M)$ is a finitely presented complex. So $0 \xrightarrow{} S^0(M) \overline{\otimes} P \xrightarrow{} S^0(M) \overline{\otimes} X \xrightarrow{} S^0(M) \overline{\otimes} Y \xrightarrow{} 0$ remains exact by~\cite[Theorem~5.1.3]{garcia-rozas}. Note that by the definition of $\overline{\otimes}$, this is just the following short exact sequence of complexes.
$$\begin{CD}
 @. \vdots @. \vdots @. \vdots  \\
@. @VV0V @VV0V @VV0V @. \\
 0 @>>> \frac{M \otimes_R P_{n+1}}{B_{n+1}(M \otimes_R P)} @>>> \frac{M \otimes_R X_{n+1}}{B_{n+1}(M \otimes_R X)} @>>> \frac{M \otimes_R Y_{n+1}}{B_{n+1}(M \otimes_R Y)}  @>>> 0 \\
@. @VV0V @VV0V @VV0V @. \\
 0 @>>> \frac{M \otimes_R P_{n}}{B_{n}(M \otimes_R P)} @>>> \frac{M \otimes_R X_{n}}{B_{n}(M \otimes_R X)} @>>> \frac{M \otimes_R Y_{n}}{B_{n}(M \otimes_R Y)}  @>>> 0 \\
@. @VV0V @VV0V @VV0V @. \\
 0 @>>> \frac{M \otimes_R P_{n-1}}{B_{n-1}(M \otimes_R P)} @>>> \frac{M \otimes_R X_{n-1}}{B_{n-1}(M \otimes_R X)} @>>> \frac{M \otimes_R Y_{n-1}}{B_{n-1}(M \otimes_R Y)}  @>>> 0 \\
@. @VV0V @VV0V @VV0V @. \\
 @. \vdots @. \vdots @. \vdots  \\
\end{CD}$$
But note that for any complex $X$, we have $\frac{M \otimes_R X_{n}}{B_{n}(M \otimes_R X)} \cong M \otimes_R (X_n/B_nX)$. (Indeed $M \otimes_R -$ preserves right exact sequences. So for each $n$, we have the right exact sequence $M \otimes_R X_{n+1} \xrightarrow{} M \otimes_R X_n \xrightarrow{} M \otimes_R (X_n/B_nX) \xrightarrow{} 0$.) So through this isomorphism, we see that for each $n$, we have a short exact sequence
$$0 \xrightarrow{} M \otimes_R (P_n/B_nP) \xrightarrow{} M \otimes_R (X_n/B_nX) \xrightarrow{} M \otimes_R (Y_n/B_nY) \xrightarrow{} 0.$$
We conclude  that $$0 \xrightarrow{} P_n/B_nP \xrightarrow{} X_n/B_nX \xrightarrow{} Y_n/B_nY \xrightarrow{} 0$$ is a pure exact sequence in $R$-Mod.

To prove (4), we now apply the snake lemma to
$$\begin{CD}
 0 @>>> Z_nP @>>> Z_nX @>>> Z_nY  @>>> 0 \\
@. @VVV @VVV @VVV @. \\
 0 @>>> P_n @>>> X_n @>>> Y_n  @>>> 0 \\
\end{CD}$$
to conclude we have a short exact sequence
$0 \xrightarrow{} B_{n-1}P \xrightarrow{} B_{n-1}X \xrightarrow{} B_{n-1}Y \xrightarrow{} 0$ for all $n$.
We then turn around and again apply the snake lemma to
$$\begin{CD}
 0 @>>> B_nP @>>> B_nX @>>> B_nY  @>>> 0 \\
@. @VVV @VVV @VVV @. \\
 0 @>>> Z_nP @>>> Z_nX @>>> Z_nY  @>>> 0 \\
\end{CD}$$
and use that $B_nX = Z_nX$ to conclude that $B_nP = Z_nP$ and $B_nY = Z_nY$.

\end{proof}

The next proposition will require the following lemma whose proof can be found in~\cite[Lemma~5.2.1]{garcia-rozas} or~\cite[Lemma~4.6]{gillespie}. For a chain complex $X$, we define its cardinality to be
$|\coprod_{n \in \Z} X_n|$.

\begin{lemma}\label{lemma-pure cardinality}
Let $\kappa$ be some regular cardinal with $\kappa > |R|$.
Say $X \in \ch$ and $S \subseteq X$ has $|S| \leq \kappa$. Then there exists a pure $P \subseteq X$ with
$S \subseteq P$ and $|P| \leq \kappa$.
\end{lemma}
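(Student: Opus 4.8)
The plan is to run the Löwenheim–Skolem style argument that produces pure subobjects of prescribed cardinality in any locally finitely presented category, carried out inside $\ch$. The feature of purity that makes it sensitive only to small data is the characterization of pure monomorphisms by finitely presented objects: the inclusion of a subcomplex $\iota\colon P\hookrightarrow X$ is pure exactly when every commutative square built from finitely presented complexes $A,B$, a map $u\colon A\to B$, and maps $g\colon A\to P$ and $h\colon B\to X$ with $\iota g=hu$, admits a filler $w\colon B\to P$ satisfying $wu=g$. This is equivalent to the definition via $\Hom_{\ch}(F,-)$ given in Definition~\ref{def-purity} by the standard facts relating the two, and I would invoke it in place of that definition. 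Recall also that the finitely presented complexes are precisely the bounded complexes whose entries are finitely presented $R$-modules, and that the image of any chain map out of such a complex is automatically a subcomplex of $X$; the latter is what keeps all of our enlargements closed under the differential.

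First I would replace $S$ by the subcomplex $P^{0}$ it generates, i.e.\ the smallest subcomplex of $X$ containing $S$ and closed under the $R$-action in each degree and under the differentials. Closing $S$ off in this way multiplies its cardinality by at most $|R|\cdot\aleph_{0}$, so $|P^{0}|\le\kappa$. Next I would construct an increasing chain $P^{0}\subseteq P^{1}\subseteq\cdots$ of subcomplexes of $X$, each of cardinality $\le\kappa$, as follows. Given $P^{i}$, consider all pairs $(u\colon A\to B,\ g\colon A\to P^{i})$ with $A,B$ finitely presented for which there exists a map $h\colon B\to X$ with $\iota g=hu$; for each such pair choose one witnessing $h$ and adjoin the finitely generated image $h(B)$ to $P^{i}$, letting $P^{i+1}$ be the subcomplex generated by $P^{i}$ together with all these chosen images. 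The point of adjoining $h(B)$ is that once $h(B)\subseteq P^{i+1}$ the map $h$ factors uniquely as $h=\iota w$ for some $w\colon B\to P^{i+1}$, and then $\iota w u=hu=\iota g$ forces $wu=g$ because $\iota$ is monic; thus the filler required by the purity criterion already lives in $P^{i+1}$.

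Finally I would set $P=\bigcup_{i<\omega}P^{i}$. Then $S\subseteq P^{0}\subseteq P$, and $|P|\le\kappa$ because $\kappa$ is regular and each $|P^{i}|\le\kappa$. To verify purity of $\iota\colon P\hookrightarrow X$, take any commutative square with finitely presented corners and $g\colon A\to P$: since $A$ is finitely presented its finitely many generators have images lying in a single $P^{i}$, so $g$ factors through $P^{i}$ and the pair $(u,g)$ was among those treated at stage $i$; hence a filler $w\colon B\to P^{i+1}\subseteq P$ was supplied. This is exactly the purity criterion, so $P$ is a pure subcomplex containing $S$ with $|P|\le\kappa$.

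The only real work is the bookkeeping that keeps each $P^{i+1}$ within $\kappa$. Here one uses that, up to isomorphism, there are at most $\max(|R|,\aleph_{0})\le\kappa$ finitely presented complexes (a finite presentation involves only finitely many elements of $R$), that there are at most $\kappa$ maps $g\colon A\to P^{i}$ out of a fixed finitely presented $A$ since such a map is determined by the images of finitely many generators in $P^{i}$, and that each chosen $h(B)$ has cardinality $\le\max(|R|,\aleph_{0})\le\kappa$. Multiplying these bounds shows at most $\kappa$ pairs are processed at each stage and each contributes $\le\kappa$ new elements, so $|P^{i+1}|\le\kappa\cdot\kappa=\kappa$; this is precisely where the hypotheses that $\kappa$ be regular and exceed $|R|$ are consumed. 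No structural input about $\ch$ beyond local finite presentability is needed, which is why the same argument appears as~\cite[Lemma~5.2.1]{garcia-rozas} and~\cite[Lemma~4.6]{gillespie}.
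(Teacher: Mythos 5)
Your argument is correct. The paper itself gives no proof of this lemma --- it simply cites \cite[Lemma~5.2.1]{garcia-rozas} and \cite[Lemma~4.6]{gillespie} --- and what you have written is a correct, self-contained rendering of the standard L\"owenheim--Skolem closure argument that underlies those references: generate a subcomplex from $S$, iterate $\omega$ times adjoining witnesses for the finitely-presented square-filling criterion, and take the union, with the cardinality bookkeeping controlled by $\kappa>|R|$. The one step you take on faith is the equivalence between the square-filling characterization of pure monomorphisms and the $\Hom_{\ch}(F,-)$-exactness of Definition~\ref{def-purity}; this is indeed standard (and is exactly the sort of equivalence the paper's own remark after the lemma appeals to), though a fully self-contained write-up would include the short pushout/presentation argument establishing it.
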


\begin{remark}\label{remark-homcomplex}
We note that~\cite[Lemma~5.2.1]{garcia-rozas} and~\cite[Lemma~4.6]{gillespie} give several other characterizations of pure exact sequences of complexes, but none of them are stated exactly the same as our definition above. However, they are equivalent. In particular, one of their characterizations of purity is that the enriched Hom-complex functor $\overline{\homcomplex}(F,-)$ remains an exact sequence (of complexes) for any finitely presented complex $F$. However, for chain complexes $X$,$Y$, the definition of $\overline{\homcomplex}(X,Y)$ turns out to just be $\Hom_{\ch}(X,\Sigma^{-n}Y)$ in degree $n$. So indeed, $\overline{\homcomplex}(F,-)$ preserves a short exact sequence for all finitely presented $F$ if and only if $\Hom_{\ch}(F,-)$ preserves the same short exact sequence for all finitely presented $F$.
\end{remark}

\begin{proposition}\label{prop-transfinite}
Suppose $\mathcal{A}$ is a class of chain complexes that is closed under taking pure subcomplexes and pure quotients. Then there is a regular cardinal $\kappa$ such that every chain complex in $\mathcal{A}$ is a transfinite extension of complexes in $\mathcal{A}$ with cardinality bounded by $\kappa$, meaning $\leq \kappa$.
\end{proposition}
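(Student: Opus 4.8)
The plan is to use the standard Eklof-style filtration / transfinite induction argument, combining the cardinality-control lemma (Lemma~\ref{lemma-pure cardinality}) with the closure hypotheses on $\mathcal{A}$. The key observation is that to write a complex $X \in \mathcal{A}$ as a transfinite extension of small complexes in $\mathcal{A}$, it suffices to build a continuous increasing chain of pure subcomplexes $0 = X_0 \subseteq X_1 \subseteq \cdots \subseteq X_\alpha \subseteq \cdots$ with union $X$, such that each successive quotient $X_{\alpha+1}/X_{\alpha}$ lies in $\mathcal{A}$ and has cardinality $\leq \kappa$. Fixing once and for all a regular cardinal $\kappa > |R|$ as in Lemma~\ref{lemma-pure cardinality}, I would carry out this construction by transfinite recursion.

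\emph{First} I would set $X_0 = 0$ and, at limit ordinals $\lambda$, set $X_\lambda = \bigcup_{\alpha < \lambda} X_\alpha$, so continuity holds by fiat. \emph{The successor step} is the heart of the argument: given a pure subcomplex $X_\alpha \subsetneq X$, choose any element in a nonempty set $S_0$ generating $X/X_\alpha$ (or simply any single element not in $X_\alpha$) with $|S_0| \leq \kappa$, and apply Lemma~\ref{lemma-pure cardinality} to the quotient complex $X/X_\alpha$ to find a pure subcomplex $\overline{P} \subseteq X/X_\alpha$ containing the image of $S_0$ with $|\overline{P}| \leq \kappa$. Pulling back along the quotient map, let $X_{\alpha+1} \subseteq X$ be the preimage of $\overline{P}$, so that $X_{\alpha+1}/X_\alpha \cong \overline{P}$. \emph{The crucial verifications} are then: (a) $X_{\alpha+1}$ is pure in $X$, which follows because purity of subcomplexes is transitive---$X_\alpha$ is pure in $X_{\alpha+1}$ (as the kernel of a quotient onto $\overline{P}$ whose purity will be used) and $X_{\alpha+1}$ is pure in $X$ because its image $\overline{P}$ is pure in $X/X_\alpha$ and $X_\alpha$ is pure in $X$; and (b) each quotient $X_{\alpha+1}/X_\alpha \cong \overline{P}$ lies in $\mathcal{A}$, which is exactly where the hypothesis that $\mathcal{A}$ is closed under \emph{pure quotients} is used: $\overline{P}$ is a pure \emph{subcomplex} of $X/X_\alpha$, and $X/X_\alpha$ is a pure quotient of $X \in \mathcal{A}$, hence lies in $\mathcal{A}$, hence so does its pure subcomplex $\overline{P}$ (using closure under pure subcomplexes).

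\emph{The main obstacle} I anticipate is getting the purity bookkeeping exactly right, particularly the transitivity of purity and the claim that $X_\alpha$ is pure in $X$ at each stage (needed so that $X/X_\alpha$ is a legitimate member of $\mathcal{A}$ to which the lemma applies). At limit stages one must check that a union of a continuous chain of pure subcomplexes is again pure; this is standard but should be stated, since it guarantees that $X/X_\lambda$ remains in $\mathcal{A}$ and the recursion can continue. One must also confirm that the process terminates, i.e., strictly exhausts $X$: since each successor step enlarges $X_\alpha$ by capturing at least one new element, a cardinality/ordinal argument shows the chain reaches $X$ at some ordinal, and by construction $X = \bigcup_\alpha X_\alpha$ is the transfinite extension sought. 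Modulo these routine but delicate purity checks, the statement follows, with $\kappa$ the regular cardinal supplied by Lemma~\ref{lemma-pure cardinality}.
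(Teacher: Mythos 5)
Your proposal is correct and follows essentially the same route as the paper: a transfinite recursion building a continuous chain of pure subcomplexes via Lemma~\ref{lemma-pure cardinality}, with purity of each $X_{\alpha+1}$ in $X$ established by composing the surjections $\Hom(F,X)\to\Hom(F,X/X_\alpha)\to\Hom(F,(X/X_\alpha)/\overline{P})$, closure of $\mathcal{A}$ under pure subcomplexes and pure quotients supplying membership of the filtration quotients, and unions of chains of pure subcomplexes remaining pure at limit stages. The only (immaterial) difference is that you start the chain at $X_0=0$ while the paper starts with a nonzero pure subcomplex of cardinality at most $\kappa$.
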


\begin{proof}
As in Lemma~\ref{lemma-pure cardinality}, we let $\kappa$ be some regular cardinal with $\kappa > |R|$. Let $A \in \class{A}$. If $|A| \leq \kappa$ there is nothing to prove. So assume $|A| > \kappa$. We will use transfinite induction to find a strictly increasing continuous chain $0 \neq A_0 \subsetneq A_1 \subsetneq A_2 \subsetneq \cdots \subsetneq A_{\alpha} \subsetneq \cdots$
of subcomplexes of $A$ with each $A_{\alpha}, A_{\alpha+1}/A_{\alpha} \in \class{A}$ and with $|A_0| , |A_{\alpha+1}/A_{\alpha}| \leq \kappa$. We start by applying Lemma~\ref{lemma-pure cardinality} to find a nonzero pure subcomplex $A_0 \subset A$ with $|A_0| \leq \kappa$. Then $A_0$ and $A/A_0$ are each complexes in $\class{A}$ by assumption. So we again apply Lemma~\ref{lemma-pure cardinality} to $A/A_0$ to obtain a nonzero pure subcomplex $A_1/A_0 \subset A/A_0$ with $|A_1/A_0| \leq \kappa$. So far we have $0 \neq A_0 \subsetneq A_1 \subsetneq A$ and with $A_0,A_1/A_0$ back in $\class{A}$ and with their cardinalities bounded by $\kappa$.

We now pause to point out the important fact that $A_1 \subset A$ is also a pure subcomplex. Indeed, given a finitely presented complex $F$, we need to argue that $\Hom_{\ch}(F,A) \rightarrow \Hom_{\ch}(F,A/A_1)$ is an epimorphism. But after identifying $(A/A_0)/(A_1/A_0) \cong A/A_1$, this map is just the composite $$\Hom_{\ch}(F,A) \xrightarrow{} \Hom_{\ch}(F,A/A_0) \xrightarrow{} \Hom_{\ch}(F,(A/A_0)/(A_1/A_0)),$$ and each of these are epimorphisms because $A_0 \subset A$ is pure and $A_1/A_0 \subset A/A_0$ is pure.

Back to the increasing chain $0 \neq A_0 \subsetneq A_1 \subsetneq A$, we also note that $A/A_1$ is back in $\class{A}$ since $(A/A_0)/(A_1/A_0) \cong A/A_1$ is a pure quotient.
So we may repeat the above procedure to construct a strictly increasing chain $0 \neq A_0 \subsetneq A_1 \subsetneq A_2 \subsetneq \cdots$ where each $A_n$ is a pure subcomplex of $A$ and each $A_{n+1}/A_n \in \class{A}$ has cardinality bounded by $\kappa$. We set $A_{\omega} = \cup_{n<\omega} A_n$. Then $A_{\omega}$ is also a pure subcomplex since pure subcomplexes are easily seen to be closed under direct unions; for example, see~\cite[pp.~3384]{gillespie}. So $A_{\omega}$ and $A/A_{\omega}$ are also each in $\class{A}$ and we may continue to build the continuous chain $$0 \neq A_0 \subsetneq A_1 \subsetneq A_2 \subsetneq \cdots \subsetneq A_{\omega} \subsetneq A_{\omega +1} \cdots$$ Continuing with transfinite induction, and setting $A_{\gamma} = \cup_{\alpha < \gamma} A_{\alpha}$ whenever $\gamma$ is a limit ordinal, this process eventually must terminate and we end up with $A$ expressed as a union of a continuous chain with all the desired properties.
\end{proof}

\subsection{Acyclicity of complexes for a given module $A$}

We will now investigate the following general notion of $A$-acyclicity for a given $R$-module $A$.
\begin{definition}\label{def-A-acyclicity}
Suppose we are given a right $R$-module $A$. Then a chain complex $X$ of flat modules, including perhaps  a complex of projectives, will be called \textbf{$\boldsymbol{A}$-acyclic} if $A \tensor_R X$ is exact.
\end{definition}

Note that we do not necessarily assume $X$ to be exact from the start. But note also that $X$ is $(A \oplus R)$-acyclic if and only if $X$ is exact and $A$-acyclic.
The next lemma is the key to the simple proof of the theorem that follows it.

\begin{lemma}[$A$-acyclicity lemma]\label{lemma-A-acyclic}
 For any chain complex $$X = \cdots \to X_{n+1} \xrightarrow{d_{n+1}} X_n \xrightarrow{d_n} X_{n-1} \xrightarrow{d_{n-1}} \cdots $$ there is a canonical factorization of the differentials $d_n$ as  $$d_n = X_n \xrightarrow{\pi_n} \frac{X_n}{B_nX} \xrightarrow{\bar{d}_n} X_{n-1}$$ satisfying the following:
\begin{enumerate}
\item $\pi_n$ is projection and $\bar{d}_n$ is defined as $\bar{d}_n(\bar{x}) = d_n(x)$.
\item For each $n$, we have the (right) exact sequence $$\frac{X_{n+1}}{B_{n+1}X} \xrightarrow{\bar{d}_{n+1}} X_n \xrightarrow{\pi_n} \frac{X_n}{B_nX} \xrightarrow{} 0.$$
\item Let $A$ be a right $R$-module.
Then the complex $A \otimes_R X$ is exact if and only if for each $n$, the induced sequence below is short exact
 $$0 \xrightarrow{} A \otimes_R \frac{X_{n+1}}{B_{n+1}X} \xrightarrow{1_A \otimes \bar{d}_{n+1}} A \otimes_R X_n \xrightarrow{1_A \otimes \pi_n} A \otimes_R \frac{X_n}{B_nX} \xrightarrow{} 0.$$ That is, if and only if applying $A \otimes_R -$ turns each $\frac{X_{n+1}}{B_{n+1}X} \xrightarrow{\bar{d}_{n+1}} X_n$ into a monomorphism.
\end{enumerate}
\end{lemma}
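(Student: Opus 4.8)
The plan is to treat the three parts in turn, with the genuine content concentrated in part (3). For (1), the only thing to check is that $\bar{d}_n$ is well defined. Since $B_nX = \im(d_{n+1})$ and $d_nd_{n+1}=0$, the map $d_n$ vanishes on $B_nX$, so it descends to $\bar{d}_n\colon X_n/B_nX\to X_{n-1}$ with $\bar{d}_n(\bar{x})=d_n(x)$; by construction $\bar{d}_n\circ\pi_n=d_n$, which is the asserted factorization. For (2), the map $\pi_n$ is a quotient map, hence epic, giving exactness at the right-hand term. Exactness at $X_n$ is the identity $\im(\bar{d}_{n+1})=\ker(\pi_n)$, and this is immediate: $\ker(\pi_n)=B_nX$, while $\im(\bar{d}_{n+1})=\im(d_{n+1})=B_nX$ because $\pi_{n+1}$ is surjective and $\bar{d}_{n+1}\circ\pi_{n+1}=d_{n+1}$. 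Note that $\bar{d}_{n+1}$ need not be monic, which is exactly why the sequence is only right exact.

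For (3), the first move is to apply the right exact functor $A\otimes_R-$ to the right exact sequence of part (2). Right exactness immediately yields exactness of the resulting sequence at its two right-hand terms, so the only thing that can fail is injectivity of $1_A\otimes\bar{d}_{n+1}$. Thus the asserted equivalence reduces to the single claim that $A\otimes_R X$ is exact if and only if $1_A\otimes\bar{d}_{n+1}$ is a monomorphism for every $n$.

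The heart of the matter, and the step I expect to require the most care, is identifying $\ker(1_A\otimes\bar{d}_{n+1})$ with the homology $H_{n+1}(A\otimes_R X)$. Here I would invoke the isomorphism recorded in Section~\ref{subsec-modified tensor and Tor}, namely $\frac{A\otimes_R X_{n+1}}{B_{n+1}(A\otimes_R X)}\cong A\otimes_R(X_{n+1}/B_{n+1}X)$, which comes from right exactness of $A\otimes_R-$ and is induced by $1_A\otimes\pi_{n+1}$. Since $1_A\otimes\pi_{n+1}$ is surjective with kernel $B_{n+1}(A\otimes_R X)$, and since $1_A\otimes d_{n+1}=(1_A\otimes\bar{d}_{n+1})\circ(1_A\otimes\pi_{n+1})$, the map $1_A\otimes\bar{d}_{n+1}$ is precisely the factorization of the boundary $1_A\otimes d_{n+1}$ through this quotient. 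Its kernel is therefore $Z_{n+1}(A\otimes_R X)/B_{n+1}(A\otimes_R X)=H_{n+1}(A\otimes_R X)$. Consequently $1_A\otimes\bar{d}_{n+1}$ is monic for all $n$ exactly when every homology group of $A\otimes_R X$ vanishes, i.e.\ exactly when $A\otimes_R X$ is exact, which is (3).

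The only real obstacle is confirming that the kernel of the reduced, tensored differential captures the \emph{full} homology and nothing larger; this is forced by the identification $\ker(1_A\otimes\pi_{n+1})=B_{n+1}(A\otimes_R X)$, which I would verify by factoring $d_{n+2}$ as $X_{n+2}\twoheadrightarrow B_{n+1}X\hookrightarrow X_{n+1}$ and tensoring, so that $\im(1_A\otimes d_{n+2})$ coincides with the image of $A\otimes_R B_{n+1}X$ in $A\otimes_R X_{n+1}$. Everything else is formal diagram-chasing built on the right exactness of $A\otimes_R-$.
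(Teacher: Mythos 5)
Your proposal is correct and follows essentially the same route as the paper's proof: both hinge on applying the right exact functor $A\otimes_R-$ to the sequence in part (2), the identification $\ker(1_A\otimes\pi_n)=B_n(A\otimes_R X)=\im(1_A\otimes\bar{d}_{n+1})$, and the observation that exactness of $A\otimes_R X$ is then equivalent to $1_A\otimes\bar{d}_{n+1}$ being monic. Your explicit identification of $\ker(1_A\otimes\bar{d}_{n+1})$ with $H_{n+1}(A\otimes_R X)$ just spells out the step the paper leaves as ``one can check.''
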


\begin{proof}
Note that the map $\bar{d}_n$ is well-defined since $B_nX \subseteq Z_nX$, and then it is clear that $d_n = \bar{d}_n \circ \pi_n$. The right exactness of the sequence in (2) follows from the fact that $\im{(\bar{d}_{n+1})} = B_nX$.

For (3), applying $A \otimes_R -$ to $\cdots \to X_{n+1} \xrightarrow{d_{n+1}} X_n \xrightarrow{d_n} X_{n-1} \xrightarrow{d_{n-1}} \cdots $, we observe that we still have the factorizations $$1_A \otimes d_n = A \otimes_R X_n \xrightarrow{1 \otimes \pi_n} A \otimes_R \frac{X_n}{B_nX} \xrightarrow{1 \otimes \bar{d}_n} A \otimes_R X_{n-1}$$ satisfying that each
 $$A \otimes_R \frac{X_{n+1}}{B_{n+1}X} \xrightarrow{1_A \otimes \bar{d}_{n+1}} A \otimes_R X_n \xrightarrow{1_A \otimes \pi_n} A \otimes_R \frac{X_n}{B_nX} \xrightarrow{} 0$$ is right exact. Note also that in general we always have $$B_n(A \otimes_R X) = \im{(1_A \otimes d_{n+1})} = \im{(1_A \otimes \bar{d}_{n+1})} = \ker{(1 \otimes \pi_n)}$$ $$ \subseteq \ker{(1 \otimes d_n)} = Z_n(A \otimes_R X).$$ So $A \otimes_R X$ is exact if and only if $\ker{(1 \otimes \pi_n)} = \ker{(1 \otimes d_n)}$. But one can check that this happens if and only if $1 \otimes \bar{d}_n$ is a monomorphism.
\end{proof}

\begin{theorem}\label{them-deconstructibility of tensor acyclic complexes}
Let $R$ be any ring and $A$ be a given right $R$-module. Let ${}_A\tilclass{F}$ denote the class of all $A$-acyclic complexes of flat modules; that is, complexes $F$ that are degreewise flat and such that $A \tensor_R F$ is exact. Then
$({}_A\tilclass{F}, \rightperp{{}_A\tilclass{F}})$ is a complete hereditary cotorsion pair cogenerated by a set. Moreover, every chain complex has a surjective ${}_A\tilclass{F}$-cover.
\end{theorem}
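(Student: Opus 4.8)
The plan is to realize $({}_A\tilclass{F},\rightperp{{}_A\tilclass{F}})$ as the cotorsion pair cogenerated by a set, since for such a pair completeness is automatic by the Eklof--Trlifaj machinery, and the left-hand class consists exactly of the retracts of transfinite extensions of the cogenerating set (see~\cite{enochs-jenda-book}). The device that produces the set is Proposition~\ref{prop-transfinite}: once I know that ${}_A\tilclass{F}$ is closed under pure subcomplexes and pure quotients, there is a regular cardinal $\kappa$ for which every complex in ${}_A\tilclass{F}$ is a transfinite extension of complexes in ${}_A\tilclass{F}$ of cardinality $\le\kappa$. I would then let $\mathcal{S}$ be a set of isomorphism representatives of the complexes in ${}_A\tilclass{F}$ of cardinality $\le\kappa$.

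The heart of the proof, and the step I expect to be the main obstacle, is therefore verifying that ${}_A\tilclass{F}$ is closed under pure subcomplexes and pure quotients. Let $0\to P\to X\to Y\to 0$ be pure exact with $X\in{}_A\tilclass{F}$. Degreewise flatness of $P$ and $Y$ is quick: by Lemma~\ref{lemma-props of pure subcomplexes}(1) each $0\to P_n\to X_n\to Y_n\to 0$ is pure in $\rmod$, and pure submodules and pure quotients of flat modules are flat. For $A$-acyclicity I would use Lemma~\ref{lemma-A-acyclic}, which reduces exactness of $A\otimes_R X$ to injectivity, for every $n$, of $1_A\otimes\bar d_{n+1}^X\colon A\otimes_R(X_{n+1}/B_{n+1}X)\to A\otimes_R X_n$. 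By Lemma~\ref{lemma-props of pure subcomplexes}(1),(3) the sequences $0\to P_n\to X_n\to Y_n\to 0$ and $0\to P_n/B_nP\to X_n/B_nX\to Y_n/B_nY\to 0$ are pure in $\rmod$, so they remain short exact after $A\otimes_R-$. Arranging these into the commutative ladder whose vertical maps are the $1_A\otimes\bar d_{n+1}$, I get injectivity of the left-hand vertical map by composing monomorphisms, and injectivity of the right-hand vertical map from the snake lemma: its three cokernels are identified with $A\otimes_R(P_n/B_nP)$, $A\otimes_R(X_n/B_nX)$, $A\otimes_R(Y_n/B_nY)$ via the right exact sequence of Lemma~\ref{lemma-A-acyclic}(2), and the induced map between the first two is the monomorphism coming from purity in degree $n$. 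Hence $A\otimes_R P$ and $A\otimes_R Y$ are exact, so $P,Y\in{}_A\tilclass{F}$.

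To conclude that $\mathcal{S}$ cogenerates exactly $({}_A\tilclass{F},\rightperp{{}_A\tilclass{F}})$ I would check the remaining closure properties of ${}_A\tilclass{F}$. It is closed under extensions (flat modules are, and given a short exact sequence of complexes with flat cokernel in each degree one tensors with $A$ and uses the long exact homology sequence), under direct limits (flats are, $A\otimes_R-$ commutes with colimits, and filtered colimits of exact complexes are exact), and under retracts; moreover it contains every projective complex, since these are direct sums of disks $D^n(P)$ and $A\otimes_R D^n(P)$ is contractible. Consequently ${}_A\tilclass{F}$ contains all $\mathcal{S}$-filtered complexes and their retracts, i.e. the left-hand class of the pair cogenerated by $\mathcal{S}$, while Proposition~\ref{prop-transfinite} supplies the reverse inclusion. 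The resulting complete cotorsion pair is hereditary: being complete, it suffices to see that ${}_A\tilclass{F}$ is closed under kernels of epimorphisms, and for $0\to K\to F'\to F\to 0$ with $F',F\in{}_A\tilclass{F}$ the flatness of $F_n$ makes the sequence degreewise pure, so each $K_n$ is a pure submodule of the flat $F'_n$ and hence flat, and applying $A\otimes_R-$ degreewise together with the long exact homology sequence shows $A\otimes_R K$ is exact.

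Finally, the surjective cover statement follows from the same data. Since ${}_A\tilclass{F}$ is closed under direct limits and the cotorsion pair is complete, the pair is perfect, so every chain complex admits an ${}_A\tilclass{F}$-cover (see~\cite{enochs-jenda-book}). Surjectivity is then forced by the fact that ${}_A\tilclass{F}$ contains the projective complexes: any surjection onto a given complex from a projective complex factors through the cover, so the cover is itself epic.
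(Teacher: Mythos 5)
Your proof is correct and follows essentially the same route as the paper: closure of ${}_A\tilclass{F}$ under pure subcomplexes and quotients via Lemma~\ref{lemma-props of pure subcomplexes} and the snake-lemma/cokernel identification from Lemma~\ref{lemma-A-acyclic}, then Proposition~\ref{prop-transfinite} to produce a cogenerating set, and the standard perfect-cotorsion-pair machinery for completeness and surjective covers. The only substantive difference is that you spell out the heredity argument (closure under kernels of epimorphisms using degreewise purity against the flat $F_n$), which the paper leaves implicit; that addition is correct.
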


\begin{proof}
We first show that
${}_A\tilclass{F}$ is closed under pure subcomplexes and pure quotients.
So suppose that $F \in {}_A\tilclass{F}$, and assume  $0 \xrightarrow{} P \xrightarrow{} F \xrightarrow{} Y \xrightarrow{} 0$ is a pure exact sequence.  We have the commutative diagram below with (right) exact columns by part~(2) of Lemma~\ref{lemma-A-acyclic} and pure exact rows by parts~(1) and~(3) of Lemma~\ref{lemma-props of pure subcomplexes}.
$$\begin{CD}
 0 @>>> \frac{P_{n+1}}{B_{n+1}P} @>>> \frac{F_{n+1}}{B_{n+1}F} @>>> \frac{Y_{n+1}}{B_{n+1}Y}  @>>> 0 \\
@. @VV\bar{d}_{n+1}V @VV\bar{d}_{n+1}V @VV\bar{d}_{n+1}V @. \\
 0 @>>> P_n @>>> F_n @>>> Y_n  @>>> 0 \\
 @. @VV\pi_nV @VV\pi_nV @VV\pi_nV @. \\
  0 @>>> \frac{P_n}{B_nP} @>>> \frac{F_n}{B_nF} @>>> \frac{Y_n}{B_nY}  @>>> 0 \\
  @. @VVV @VVV @VVV @. \\
  @. 0 @. 0 @. 0 \\
\end{CD}$$
Hence applying $A \tensor_R -$ to this diagram, the rows and columns all remain exact. Moreover, in light of part~(3) of Lemma~\ref{lemma-A-acyclic}, the middle column enjoys that  $0 \xrightarrow{} A \otimes_R \frac{F_{n+1}}{B_{n+1}F} \xrightarrow{1_A \otimes \bar{d}_{n+1}} A \otimes_R F_n$ is a monomorphism.
We can now argue using the snake lemma that $0 \xrightarrow{} A \otimes_R \frac{P_{n+1}}{B_{n+1}P} \xrightarrow{1_A \otimes \bar{d}_{n+1}} A \otimes_R P_n$ and $0 \xrightarrow{} A \otimes_R \frac{Y_{n+1}}{B_{n+1}Y} \xrightarrow{1_A \otimes \bar{d}_{n+1}} A \otimes_R Y_n$ are also monomorphisms. By again applying part~(3) of Lemma~\ref{lemma-props of pure subcomplexes} this means $P$ and $Y$ are in ${}_A\tilclass{F}$, proving ${}_A\tilclass{F}$ is closed under pure subcomplexes and pure quotients.

It now follows from Proposition~\ref{prop-transfinite} that there exists a regular cardinal $\kappa$ such that every complex $F \in {}_A\tilclass{F}$ is a transfinite extension of the class of all complexes in ${}_A\tilclass{F}$ with cardinality bounded by $\kappa$. So we can let $\class{S}$ be a set of representatives for the isomorphism classes of all complexes in ${}_A\tilclass{F}$ with cardinality bounded by $\kappa$. Then $\class{S}$ cogenerates a complete cotorsion pair $(\leftperp{(\rightperp{\class{S}})}, \rightperp{\class{S}})$, and $\leftperp{(\rightperp{\class{S}})}$ consists precisely of direct summands of transfinite extensions of complexes in $\class{S}\cup\{D^n(R)\}$.
But clearly $\class{S}\cup\{D^n(R)\} \subseteq {}_A\tilclass{F}$. Also, ${}_A\tilclass{F}$ is clearly closed under direct summands (which in fact are pure), extensions, and direct limits, and hence all transfinite extensions.
We conclude $\leftperp{(\rightperp{\class{S}})} = {}_A\tilclass{F}$, proving (${}_A\tilclass{F}, \rightperp{{}_A\tilclass{F}})$ is a cotorsion pair cogenerated by a set.

It is now a standard result that the cotorsion pair is complete and that every complex $X$ has a ${}_A\tilclass{F}$-cover, since ${}_A\tilclass{F}$ is closed under direct limits. See~\cite[Corollary~5.2.7]{enochs-jenda-book} for an argument for module categories that carries over to Grothendieck categories.
\end{proof}

\subsection{AC-acyclic and firmly acyclic}
Based on the duality between absolutely clean and level modules, the following new type of acyclicity for complexes of projectives became interesting in~\cite{bravo-gillespie-hovey}. For our purposes we are now extending the definitions to complexes of flats.

\begin{definition}\label{def-acyclicity}
Let $X$ be a chain complex of flat $R$-modules, including perhaps  a complex of projectives.
\begin{enumerate}
\item We say that $X$ is \textbf{AC-acyclic} if $A \tensor_R X$ is exact for every absolutely clean right $R$-module $A$.  If $X$ is itself exact we say $X$  is \textbf{exact AC-acyclic}.
\item We say that $X$ is  \textbf{firmly acyclic} if $\Hom_R(X,F)$ is exact for all level left $R$-modules $F$. If $X$ is itself exact we say $X$ is \textbf{exact firmly acyclic}.
\end{enumerate}
\end{definition}

A main result of~\cite{bravo-gillespie-hovey}, extending a result of Murfet and Salarian from~\cite{murfet-salarian}, is the following.

\begin{theorem}[\cite{bravo-gillespie-hovey}, Theorem~6.6]\label{them-AC-acyclic-firmly-acyclic}
Let $R$ be any ring and $P$ a complex of projective modules. Then $P$ is AC-acyclic if and only if it is firmly acyclic. If all level modules have finite projective dimension, these conditions are equivalent to $\Hom_R(P,Q)$ being exact for all projective modules $Q$.
\end{theorem}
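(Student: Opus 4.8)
The plan is to reduce both characterizations to the character-module duality between absolutely clean and level modules recalled in Subsection~\ref{subsec-absclean-level}. The crucial input is the classical adjunction isomorphism relating $\Hom$ and tensor through the Pontryagin dual $(-)^+ = \Hom_{\Z}(-,\Q)$: for a complex $P$ of flat modules and any left $R$-module $F$, there is a natural isomorphism of complexes
\[
(\Hom_R(P,F))^+ \;\cong\; P \otimes_R F^+ ,
\]
where on the left we apply $(-)^+$ degreewise to the Hom-complex, and on the right $F^+$ is a right $R$-module. Since $\Q$ is an injective cogenerator of abelian groups, a complex of abelian groups is exact if and only if its character dual is exact. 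Thus $\Hom_R(P,F)$ is exact if and only if $P \otimes_R F^+$ is exact.

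With this translation in hand, I would argue as follows. First I would show \emph{firmly acyclic implies AC-acyclic}. Suppose $\Hom_R(P,F)$ is exact for every level left module $F$; I want $A \otimes_R P$ exact for every absolutely clean right module $A$. By the duality of Subsection~\ref{subsec-absclean-level}, a right module $A$ is absolutely clean if and only if $A^+$ is a level left module. Given such an $A$, set $F = A^+$, which is level, so $\Hom_R(P,F)$ is exact, hence by the displayed isomorphism $P \otimes_R A^{++}$ is exact. Since the canonical pure embedding $A \hookrightarrow A^{++}$ splits after applying $(-)^+$ (equivalently, $A$ is a direct summand of $A^{++}$ as these are character-reflexive up to a split), exactness of $P \otimes_R A^{++}$ forces exactness of $P \otimes_R A$; alternatively one checks directly that $A$ being a pure submodule of $A^{++}$ and $P$ being degreewise flat makes $A \otimes_R P \to A^{++}\otimes_R P$ a degreewise pure (hence injective on homology) inclusion of a direct summand. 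For the converse, \emph{AC-acyclic implies firmly acyclic}, I would run the dual computation: if $F$ is level then $F^+$ is absolutely clean, so AC-acyclicity gives $P \otimes_R F^+$ exact, and the displayed isomorphism then shows $(\Hom_R(P,F))^+$ is exact, whence $\Hom_R(P,F)$ is exact. This direction is clean and symmetric.

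For the final clause, assume every level module has finite projective dimension. The statement to prove is that, for $P$ a complex of projectives, firm acyclicity is equivalent to $\Hom_R(P,Q)$ being exact for all projective $Q$. One implication is immediate since projectives are level (they are flat, and flat modules are level). For the reverse, suppose $\Hom_R(P,Q)$ is exact for all projective $Q$; given a level module $F$, take a finite projective resolution $0 \to Q_n \to \cdots \to Q_0 \to F \to 0$ and induct on its length. The key point is that $P$ is a complex of \emph{projectives}, so $\Hom_R(P,-)$ is exact on the module variable; applying it to the resolution and using the resulting short exact sequences of Hom-complexes together with the long exact sequence in homology lets one propagate exactness from the $Q_i$ to $F$ by dimension shifting. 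I expect this last dévissage to be the main technical obstacle, since one must be careful that $\Hom_R(P,-)$ preserves the short exact sequences arising from breaking up the finite resolution—this uses precisely that each $P_n$ is projective—and that the induction on projective dimension is correctly anchored; the first two equivalences, by contrast, are formal consequences of character duality and should go through with minimal friction.
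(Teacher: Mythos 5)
This statement is quoted from~\cite{bravo-gillespie-hovey} and the paper gives no proof of its own, so the only comparison to make is with your argument itself --- and your argument has a fatal gap. The ``crucial input'' isomorphism $(\Hom_R(P,F))^+ \cong P \otimes_R F^+$ is false in the generality you need it. The natural map $F^+ \otimes_R P_n \to \bigl(\Hom_R(P_n,F)\bigr)^+$ is an isomorphism when $P_n$ is finitely presented, but for $P_n = R^{(I)}$ with $I$ infinite it is the non-surjective map $(F^+)^{(I)} \to (F^I)^+$ (a direct sum mapping into the dual of a product). Since the whole point of the theorem is to handle complexes of arbitrary, not necessarily finitely generated, projectives, this identity cannot be invoked. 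The adjunction that \emph{is} always valid goes the other way: $(A \otimes_R P)^+ \cong \Hom_R(P, A^+)$ for any complex $P$ and right module $A$. That identity gives the direction ``firmly acyclic $\Rightarrow$ AC-acyclic'' in one line (if $A$ is absolutely clean then $A^+$ is level, so $\Hom_R(P,A^+)\cong (A\otimes_R P)^+$ is exact, so $A\otimes_R P$ is exact because $\Q$ is an injective cogenerator); your detour through $A^{++}$ is unnecessary, and your claim that $A$ is a direct summand of $A^{++}$ is also wrong --- the canonical map $A \to A^{++}$ is a pure monomorphism but does not split in general.

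The direction ``AC-acyclic $\Rightarrow$ firmly acyclic'' is where all the content of the theorem lives, and your proof of it rests entirely on the false isomorphism. A decisive sanity check is the Remark immediately following this theorem in the paper: over a von Neumann regular ring that is not semisimple, a non-split short exact sequence viewed as a complex is an exact AC-acyclic complex of \emph{flat} modules that is not firmly acyclic. Your argument for this direction uses nothing about $P$ beyond degreewise flatness, so if it were correct it would contradict that example; any valid proof must use projectivity of the terms $P_n$ in an essential way, which is exactly why~\cite{bravo-gillespie-hovey} devotes a nontrivial theorem to it. Your d\'evissage for the final clause (downward induction along a finite projective resolution of a level module, using that $\Hom_R(P,-)$ preserves short exact sequences because each $P_n$ is projective, plus the long exact homology sequence) is correct, but it only connects ``firmly acyclic'' with ``$\Hom_R(P,Q)$ exact for all projective $Q$'' and does not repair the main equivalence.
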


\begin{remark}\label{remark-von Neumann}
 It is important to note that $X$ must be taken to be a complex of projectives in the above theorem. Indeed
there are von Neumann regular rings (every module is flat) that are not semisimple (every module is projective). Such a ring is coherent and there must be short exact sequences that are not split, though all short exact sequences are pure. Any such sequence must be exact AC-acyclic when viewed as a chain complex. However, it can't be exact firmly acyclic because then it would have to split.
\end{remark}

The main motivation for considering $A$-acyclic complexes is explained in the following setup.

\begin{setup}\label{setup-choice of A} We know from~\cite[Proposition~2.5~(v)]{bravo-gillespie-hovey} that there is a set $\class{S}$ of modules for which every absolutely clean (right) $R$-module is a transfinite extension of modules in $\class{S}$. Fix such a set $\class{S}$ and let $A$ to be the direct sum of all modules in $\class{S}$. Then a complex $X$ of flat modules is $A$-acyclic precisely when it is AC-acyclic in the sense of Definition~\ref{def-acyclicity}. If we wish, we may also ``throw in'' the (right) module $R$, making it too a summand of $A$. In this case $X$ is $A$-acyclic precisely when it is an exact AC-acyclic complex.
\end{setup}

Taking $A$ to be as in Setup~\ref{setup-choice of A}, we get the following from Theorem~\ref{them-deconstructibility of tensor acyclic complexes}.

\begin{corollary}\label{cor-AC acyclic}
Let ${}_A\tilclass{F}$ denote the class of all AC-acyclic complexes of flat modules in $\ch$. Then $({}_A\tilclass{F}, \rightperp{{}_A\tilclass{F}})$ is a complete hereditary cotorsion pair cogenerated by a set. Moreover, every chain complex has a surjective ${}_A\tilclass{F}$-cover. The same results hold if we replace ${}_A\tilclass{F}$ with the class of all exact AC-acyclic complexes of flat modules.
\end{corollary}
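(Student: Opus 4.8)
The plan is to deduce this directly from Theorem~\ref{them-deconstructibility of tensor acyclic complexes} by letting a single right module $A$ carry the entire AC-acyclicity condition, exactly as arranged in Setup~\ref{setup-choice of A}. The only real content is the assertion embedded in that Setup: for the particular choice $A = \bigoplus_{S \in \class{S}} S$, a complex $F$ of flat modules is $A$-acyclic if and only if it is AC-acyclic. Granting this, the class ${}_A\tilclass{F}$ of $A$-acyclic complexes of flats named in the theorem is literally the same class as the AC-acyclic complexes of flats named in the corollary, so every conclusion of the theorem---complete hereditary cotorsion pair, cogenerated by a set, existence of surjective covers---transports verbatim.

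So first I would verify the Setup reduction. Since $\tensor_R$ commutes with direct sums and a coproduct of complexes is exact precisely when each summand is, $A \tensor_R F$ is exact if and only if $S \tensor_R F$ is exact for every $S \in \class{S}$; in particular every AC-acyclic $F$ is $A$-acyclic. For the converse I would run a transfinite induction over a filtration $0 = A'_0 \subseteq A'_1 \subseteq \cdots$, $A' = \bigcup_{\alpha} A'_{\alpha}$, of an arbitrary absolutely clean module $A'$ with successive quotients in $\class{S}$, which exists by~\cite[Proposition~2.5~(v)]{bravo-gillespie-hovey}. Because each $F_n$ is flat, applying $-\tensor_R F$ to each short exact sequence $0 \to A'_{\alpha} \to A'_{\alpha+1} \to A'_{\alpha+1}/A'_{\alpha} \to 0$ yields a short exact sequence of complexes (there is no obstruction from $\Tor$ since $F$ is degreewise flat). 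The long exact homology sequence then shows that exactness of $A'_{\alpha} \tensor_R F$ is inherited at successor stages, and at a limit ordinal $\gamma$ one uses $A'_{\gamma} \tensor_R F = \colim_{\alpha < \gamma}(A'_{\alpha} \tensor_R F)$ together with the fact that homology commutes with filtered colimits. Hence $S \tensor_R F$ exact for all $S \in \class{S}$ forces $A' \tensor_R F$ exact for all absolutely clean $A'$, i.e.\ $A$-acyclic implies AC-acyclic.

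With the reduction in hand, I would simply invoke Theorem~\ref{them-deconstructibility of tensor acyclic complexes} for this $A$ to obtain that $({}_A\tilclass{F}, \rightperp{{}_A\tilclass{F}})$ is a complete hereditary cotorsion pair cogenerated by a set, together with the surjective ${}_A\tilclass{F}$-covers. For the final sentence, I would repeat the argument after enlarging $A$ to include $R$ as a direct summand, as permitted in Setup~\ref{setup-choice of A}: then $R \tensor_R F \cong F$ exact forces $F$ itself to be exact, so $A$-acyclicity becomes precisely the exact AC-acyclic condition, and Theorem~\ref{them-deconstructibility of tensor acyclic complexes} applies to this enlarged $A$ without change.

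I expect no genuine obstacle; the corollary is a repackaging of the theorem once the Setup translation is in place. The one mildly substantive point is that translation, whose crux is the interchange of homology with the filtered colimits arising in the transfinite filtration of an absolutely clean module, made possible by the degreewise flatness of $F$.
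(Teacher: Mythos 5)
Your proposal is correct and follows exactly the paper's route: the paper's entire proof is to take $A$ as in Setup~\ref{setup-choice of A} and invoke Theorem~\ref{them-deconstructibility of tensor acyclic complexes}. The only difference is that you additionally verify the claim embedded in the Setup (that $A$-acyclicity for $A=\bigoplus_{S\in\class{S}}S$ coincides with AC-acyclicity, via the transfinite induction using degreewise flatness of $F$ and the compatibility of homology with filtered colimits), which the paper asserts without proof; your verification is sound.
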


\section{Flat model structures on complexes of modules}\label{sec-AC-acyclic flat model on ch}

Let $R$ be a ring and $A$ a given $R$-module. In~\cite[Theorem~4.1]{bravo-gillespie-hovey} we see a general theorem that puts an injective model structure, the \emph{$A$-acyclic injective model structure}, on $\ch$. Its main application was to put a model structure on $\ch$ whose fibrant objects are all the exact AC-acyclic complexes of injectives; that is, exact complexes $I$ of injectives such that $\Hom_R(A,I)$ remains exact for all absolutely clean (left) $R$-modules $A$. Denoting its homotopy category by $\cat{S}_{inj}$, we think of this category as the injective stable category of $R$. On the other hand, we then see in~\cite[Theorem~6.1]{bravo-gillespie-hovey} a general theorem that puts a projective model structure, the \emph{$A$-acyclic projective model structure}, on $\ch$. Its main application was to put a model structure on $\ch$ whose cofibrant objects are all the exact firmly acyclic complexes of projectives; that is, exact complexes $P$ of projectives such that $\Hom_R(P,L)$ remains exact for all level (left) $R$-modules $L$. Denoting its homotopy category by $\cat{S}_{prj}$, we think of this category as the projective stable category of $R$. In general, $\cat{S}_{inj}$ is not equivalent to $\cat{S}_{prj}$, but they agree when $R$ is Gorenstein and recover the usual stable module category in this case.

It is natural to attempt to make similar constructions using complexes of flat modules, and that is the goal of this section. As it turns out, the flat model structures of this section coincide with the projective model structures of~\cite[Section~6]{bravo-gillespie-hovey}, in the sense that they have the same homotopy categories.
This means that we can ``mock'' the projective model structures for (quasi-coherent) sheaf categories. In particular, $\cat{S}_{prj}$ ought to extend to sheaf categories, and this will be the subject of Section~\ref{sec-sheaves} and~\ref{sec-AC-acyclic flat model on sheaf}.

For comparison purposes we now restate the relevant result from~\cite{bravo-gillespie-hovey}.

\begin{theorem}[\cite{bravo-gillespie-hovey}, Theorem~6.1]\label{thm-how to create projective on chain}
Let $R$ be any ring and $A$ a given right $R$-module.  Let
${}_A\tilclass{P}$ be the class of all $A$-acyclic complexes of projectives; that
is, chain complexes $P$ that are degreewise projective and such that
$A\otimes_{R} P$ is exact.  Then there is a cofibrantly generated
abelian model structure on $\ch$ determined by the projective cotorsion pair
$({}_A\tilclass{P}, \class{W})$ where $\class{W} = \rightperp{({}_A\tilclass{P})}$.
Furthermore, $\class{W}$ contains all
contractible complexes.  We call this model structure the
\textbf{$A$-acyclic projective model structure}.  Its homotopy category
is equivalent to $K_{\text{A-ac}}(Proj)$, the chain homotopy category of all $A$-acyclic complexes
of projectives.
\end{theorem}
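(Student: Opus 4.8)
The goal is to exhibit $({}_A\tilclass{P},\class{W})$ as a \emph{projective cotorsion pair} in the sense of Subsection~\ref{subsec-abelian model cats}: a cotorsion pair $(\class{C},\class{W})$ whose class $\class{W}$ is thick and whose core $\class{C}\cap\class{W}$ is the class $\class{P}$ of projective objects of $\ch$, namely the contractible complexes of projective modules. Once this is in hand, Hovey's correspondence~\cite{hovey} converts it into an abelian model structure $({}_A\tilclass{P},\class{W},\ch)$, cofibrant generation follows from the pair being cogenerated by a set, and the standard description of the homotopy category of a projective model structure identifies $\Ho$ with $K_{\text{A-ac}}(Proj)$. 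So the work divides into: (i) showing $({}_A\tilclass{P},\rightperp{{}_A\tilclass{P}})$ is a complete hereditary cotorsion pair cogenerated by a set; (ii) computing the core and deducing thickness; and (iii) the homotopy-category identification.

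For the heredity half of (i), suppose $0\to C'\to C\to C''\to 0$ is short exact with $C,C''\in{}_A\tilclass{P}$. Each degree splits because $C''_n$ is projective, so $C'$ is degreewise projective, and tensoring the degreewise-split sequence with $A$ keeps it exact; since $A\tensor_R C$ and $A\tensor_R C''$ are exact, the long exact sequence forces $A\tensor_R C'$ exact. Thus ${}_A\tilclass{P}$ is closed under kernels of epimorphisms and the pair is hereditary. The completeness and cogeneration-by-a-set is the \textbf{main obstacle}. I would mirror the proof of Theorem~\ref{them-deconstructibility of tensor acyclic complexes}, where the $A$-acyclicity Lemma~\ref{lemma-A-acyclic} together with Lemma~\ref{lemma-props of pure subcomplexes} propagate $A$-acyclicity across a \emph{pure} subcomplex and its quotient. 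The difficulty is that, unlike flatness, projectivity is \emph{not} inherited by pure subcomplexes, so the purely purity-based filtration of Proposition~\ref{prop-transfinite} is unavailable. The remedy is to replace the cardinality Lemma~\ref{lemma-pure cardinality} by a Kaplansky-type version: using Kaplansky's theorem that each $C_n$ is a direct sum of $\le(|R|+\aleph_0)$-generated projectives, one runs a back-and-forth L\"owenheim--Skolem argument to enlarge any small $S\subseteq C$ to a subcomplex $P'$ that is simultaneously (a) a degreewise direct summand of $C$ with projective complement, so that $P'$ and $C/P'$ are again complexes of projectives, and (b) a pure subcomplex, so that the snake-lemma computation of Theorem~\ref{them-deconstructibility of tensor acyclic complexes} applies and places $P'$ and $C/P'$ in ${}_A\tilclass{P}$. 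Interweaving these two requirements is the delicate point; granting it, every member of ${}_A\tilclass{P}$ is a transfinite extension of $\le\kappa$-bounded members, a set $\class{S}$ of representatives is formed, and the Eklof--Trlifaj machinery yields that $({}_A\tilclass{P},\rightperp{{}_A\tilclass{P}})$ is complete and cogenerated by $\class{S}\cup\{D^n(R)\}$.

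For (ii), first note $\class{P}\subseteq{}_A\tilclass{P}$, since a contractible complex of projectives $Q$ has $A\tensor_R Q$ contractible, hence exact. Conversely, if $C\in{}_A\tilclass{P}\cap\class{W}$, choose a special precover $0\to W\to Q\to C\to 0$ with $Q\in\class{P}$ and $W\in\class{W}$; as $C\in{}_A\tilclass{P}$ gives $\Ext^1_{\ch}(C,W)=0$, the sequence splits and $C$ is a retract of $Q$, so $C\in\class{P}$. Hence $\class{C}\cap\class{W}=\class{P}$. Thickness of $\class{W}$ I would obtain for free from~\cite{gillespie-hovey triples}, applied to the two complete hereditary cotorsion pairs $(\class{P},\ch)$ and $({}_A\tilclass{P},\class{W})$: they have the common core $\class{P}$ and satisfy $\class{W}\subseteq\ch$, so the cited result certifies that $\class{W}$ is thick and yields the Hovey triple $({}_A\tilclass{P},\class{W},\ch)$. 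The ``furthermore'' is then a direct check: a contractible complex is a direct sum of disks, and $\Ext^1_{\ch}(X,D^n(M))=0$ for every complex of projectives $X$ because any such extension is degreewise split while $\Hom_{\ch}(-,D^n(M))\cong\Hom_R((-)_n,M)$ is exact on degreewise-split sequences; thus every contractible complex lies in $\class{W}=\rightperp{{}_A\tilclass{P}}$.

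Finally, for (iii), in the resulting projective model structure every object is fibrant and the cofibrant objects are exactly ${}_A\tilclass{P}$. Two maps between cofibrant objects are homotopic precisely when their difference factors through the core $\class{P}$, and for complexes of projectives factoring through a contractible complex is the same as being null-homotopic. Hence $\Ho$ is the quotient of ${}_A\tilclass{P}$ by chain homotopy, which is exactly $K_{\text{A-ac}}(Proj)$, as in~\cite{bravo-gillespie-hovey}. Cofibrant generation is inherited from the two cotorsion pairs being cogenerated by sets.
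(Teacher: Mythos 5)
You should first note what you are being compared against: the paper does not prove this statement at all. It is imported verbatim from \cite{bravo-gillespie-hovey} (Theorem~6.1 there), restated ``for comparison purposes,'' so the only internal material to measure your argument by is the flat analogue --- Theorem~\ref{them-deconstructibility of tensor acyclic complexes} and Theorem~\ref{thm-how to create flat on chain} --- whose purity-based strategy you are adapting. Your overall architecture (deconstructibility of ${}_A\tilclass{P}$ $\Rightarrow$ a complete hereditary cotorsion pair cogenerated by a set; core equal to the contractible complexes of projectives; thickness of $\class{W}$ via the gluing result of \cite{gillespie-hovey triples} applied to $(\class{P},\ch)$ and $({}_A\tilclass{P},\class{W})$; identification of the homotopy category via the coincidence of the formal homotopy relation with chain homotopy on complexes of projectives) is the right one and matches how the cited source proceeds.

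That said, there are two gaps. The first you flag but do not close: producing, inside a complex $C\in{}_A\tilclass{P}$, a small subcomplex $P'$ that is \emph{simultaneously} a degreewise direct summand with projective complement and a pure subcomplex in the sense of Definition~\ref{def-purity}. This is not a routine interweaving: a degreewise split subcomplex need not be pure (the degreewise split sequence $0\to S^0(R)\to D^1(R)\to S^1(R)\to 0$ is not pure, since the identity of the finitely presented complex $S^1(R)$ does not lift to $D^1(R)$), and purity does not produce degreewise summands; the back-and-forth that reconciles the two conditions is the entire technical content of the theorem, and as written your step (i) is a plan rather than a proof. The second gap is a circularity in your core computation: the sequence $0\to W\to Q\to C\to 0$ with $Q\in\class{P}$ \emph{and} $W\in\class{W}$ is not supplied by completeness of either cotorsion pair you have at that stage --- $(\class{P},\ch)$ gives no control on the kernel, and $({}_A\tilclass{P},\class{W})$ places the middle term only in ${}_A\tilclass{P}$ --- while the usual way to manufacture it uses thickness of $\class{W}$, which you derive only \emph{afterwards} from the core computation. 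This is repairable: take $Q=\Sigma^{-1}\mathrm{cone}(1_C)$, a contractible complex of projectives mapping onto $C$ with kernel $\Sigma^{-1}C$, which lies in $\class{W}=\rightperp{({}_A\tilclass{P})}$ because that class is closed under suspension; then $\Ext^1_{\ch}(C,\Sigma^{-1}C)=0$ splits the sequence and your retract argument finishes. Alternatively, argue exactly as the paper does in the flat case (proof of Theorem~\ref{thm-how to create flat on chain}): by \cite[Proposition~3.7]{gillespie-degreewise-model-strucs}, membership of $C$ in $\rightperp{({}_A\tilclass{P})}$ forces every map from an object of ${}_A\tilclass{P}$ into $C$ to be null homotopic, so $1_C\sim 0$ and $C$ is a contractible complex of projectives.
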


The next theorem says that the homotopy category $K_{\text{A-ac}}(Proj)$ can be recovered from a model structure based on flat modules.  First we recall some notation from~\cite{gillespie}. Given a cotorsion pair $(\class{F},\class{C})$ in $R$-Mod, it lifts to two cotorsion pairs $(\tilclass{F},\dgclass{C})$ and $(\dgclass{F},\tilclass{C})$ in $\ch$. Here $\tilclass{F}$ (resp. $\tilclass{C}$) consists of all exact complexes $X$ with cycles $Z_nX \in \class{F}$ (resp.$Z_nX \in \class{C}$). We also let $\class{E}$ denote the class of all exact complexes. When $(\class{F},\class{C})$ is Enochs' flat cotorsion pair, then it was shown in~\cite{gillespie} that we have an hereditary abelian model structure $(\dgclass{F}, \class{E}, \dgclass{C})$ on $\ch$ whose corresponding complete cotorsion pairs are $(\tilclass{F},\dgclass{C})$ and $(\dgclass{F},\tilclass{C})$. We call this the \emph{flat model structure} for the derived category $\class{D}(R)$. The complexes in $\tilclass{F}$ are the flat objects in $\ch$; they are direct limits of projective complexes.

\begin{theorem}\label{thm-how to create flat on chain}
Let $R$ be any ring and $A$ a given right $R$-module.
Let ${}_A\tilclass{F}$ be the class of all $A$-acyclic complexes of flat modules; that
is, chain complexes $F$ that are degreewise flat and such that
$A\otimes_{R} F$ is exact. Then there is a cofibrantly generated abelian model structure, the \textbf{A-acyclic flat model structure},  $({}_A\tilclass{F}, \class{W}, \dgclass{C})$ on $\ch$. Moreover, the thick class $\class{W}$ of trivial objects is exactly the same $\class{W}$ as in Theorem~\ref{thm-how to create projective on chain}.
Thus its homotopy category is also equivalent to $K_{\text{A-ac}}(Proj)$, the chain homotopy category of all $A$-acyclic complexes
of projectives.
\end{theorem}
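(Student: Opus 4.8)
The plan is to produce the $A$-acyclic flat model structure by exhibiting two complete hereditary cotorsion pairs with equal cores and invoking the gluing result of~\cite{gillespie-hovey triples} quoted in Section~\ref{subsec-abelian model cats}. The two cotorsion pairs should be $({}_A\tilclass{F}, \rightperp{{}_A\tilclass{F}})$ from Corollary~\ref{cor-AC acyclic} (in the guise of Theorem~\ref{them-deconstructibility of tensor acyclic complexes}) and the cotorsion pair $(\tilclass{F},\dgclass{C})$ coming from the flat model structure of~\cite{gillespie}. To apply the gluing theorem I need these to fit the template $(\tilclass{C},\class{F})$ and $(\class{C},\tilclass{F})$ with a common right-hand class; here the common class will be $\dgclass{C}$. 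So the first thing to check is that $\rightperp{{}_A\tilclass{F}} = \dgclass{C}$, equivalently that $\leftperp{(\dgclass{C})} = {}_A\tilclass{F}$. Since $\tilclass{F} \subseteq {}_A\tilclass{F}$ (every exact complex of flats with flat cycles is certainly $A$-acyclic, as $A\tensor_R -$ is exact on short exact sequences with flat kernel) and $(\tilclass{F},\dgclass{C})$ is a cotorsion pair, I already get ${}_A\tilclass{F} \subseteq \leftperp{(\dgclass{C})}$ once I know $\dgclass{C} \subseteq \rightperp{{}_A\tilclass{F}}$; the reverse inclusion is the substantive point.

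First I would verify the two cotorsion pairs are complete and hereditary: completeness and heredity of $({}_A\tilclass{F}, \rightperp{{}_A\tilclass{F}})$ is exactly Corollary~\ref{cor-AC acyclic}, and $(\tilclass{F},\dgclass{C})$ is complete hereditary by~\cite{gillespie}. Next I would identify the cores. The core of $(\tilclass{F},\dgclass{C})$ is $\tilclass{F}\cap\dgclass{C}$, which is the class of contractible complexes of flat-cotorsion modules (equivalently the categorically projective/flat objects that are also cotorsion); this is the known core of the flat model structure. I must check that ${}_A\tilclass{F}\cap\rightperp{{}_A\tilclass{F}}$ equals this same class. Containment of the flat core in ${}_A\tilclass{F}$ is clear; for the matching of cores the key is again the equality $\rightperp{{}_A\tilclass{F}} = \dgclass{C}$, which forces both cores to be $\tilclass{F}\cap\dgclass{C}$.

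The main obstacle, then, is proving $\rightperp{{}_A\tilclass{F}} = \dgclass{C}$. The inclusion $\dgclass{C}\subseteq\rightperp{{}_A\tilclass{F}}$ should follow from heredity: writing any $F\in{}_A\tilclass{F}$ and any $C\in\dgclass{C}$, one reduces $\Ext^1$-vanishing to the flat cotorsion pair by a dimension-shift, using that $\tilclass{F}$ is the trivially cofibrant class and that $A$-acyclic flat complexes are built from $\tilclass{F}$ up to summands of trivial pieces. For the reverse inclusion $\rightperp{{}_A\tilclass{F}}\subseteq\dgclass{C}$, since $\dgclass{C} = \rightperp{\tilclass{F}}$ and $\tilclass{F}\subseteq{}_A\tilclass{F}$, we automatically get $\rightperp{{}_A\tilclass{F}}\subseteq\rightperp{\tilclass{F}} = \dgclass{C}$. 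So in fact the hard direction is $\dgclass{C}\subseteq\rightperp{{}_A\tilclass{F}}$, i.e.\ that cotorsion complexes in the dg-sense see no $\Ext^1$ against the larger class of $A$-acyclic flats. I expect this to follow by comparing with Theorem~\ref{thm-how to create projective on chain}: the projective cotorsion pair $({}_A\tilclass{P},\class{W})$ has $\class{W}=\rightperp{{}_A\tilclass{P}}$, and since every complex of flats is a direct limit (hence a nice filtered object) of complexes of projectives, one transfers the $\Ext$-vanishing from the projective side.

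Finally, once $\rightperp{{}_A\tilclass{F}} = \dgclass{C}$ is in hand, the gluing theorem of~\cite{gillespie-hovey triples} produces a unique thick class $\class{W}'$ with $({}_A\tilclass{F},\class{W}',\dgclass{C})$ a Hovey triple, and $\tilclass{F} = {}_A\tilclass{F}\cap\class{W}'$ together with $\tilclass{F}\cap\dgclass{C} = \class{W}'\cap\dgclass{C}$. To finish the theorem I must identify $\class{W}'$ with the class $\class{W}=\rightperp{{}_A\tilclass{P}}$ of Theorem~\ref{thm-how to create projective on chain}. This is a two-out-of-three argument on the level of trivial objects: $\class{W}'$ is characterized as the class of $W$ admitting a special ${}_A\tilclass{F}$-precover with kernel in $\dgclass{C}$, and one checks both model structures have the same weak equivalences because they share the same cofibrant-trivial and fibrant-trivial classes after passing to the common homotopy category $K_{\text{A-ac}}(Proj)$. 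The equality of homotopy categories then follows formally, since cofibrant replacement in the flat structure lands in ${}_A\tilclass{F}$ and can be further replaced by a complex of projectives in ${}_A\tilclass{P}$ without changing the class in the stable category, giving the claimed Quillen equivalence with the projective model structure.
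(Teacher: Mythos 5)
Your overall skeleton is the right one --- the paper also glues the two complete hereditary cotorsion pairs $(\tilclass{F},\dgclass{C})$ and $({}_A\tilclass{F},\rightperp{{}_A\tilclass{F}})$ via the result of~\cite{gillespie-hovey triples} and then identifies $\class{W}$ with the trivial class of Theorem~\ref{thm-how to create projective on chain} --- but your central step is false. You make the equality $\rightperp{{}_A\tilclass{F}} = \dgclass{C}$ the linchpin of the argument. Since both $({}_A\tilclass{F},\rightperp{{}_A\tilclass{F}})$ and $(\tilclass{F},\dgclass{C})$ are cotorsion pairs, that equality would force ${}_A\tilclass{F} = \leftperp{(\dgclass{C})} = \tilclass{F}$, i.e.\ every $A$-acyclic complex of flats would have flat cycles; this fails already for a complete projective resolution of a non-projective Gorenstein projective module over a Gorenstein ring. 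The gluing theorem does not ask for equality of the two right-hand classes: it asks only for the containment $\rightperp{{}_A\tilclass{F}} \subseteq \dgclass{C}$ (which, as you note, is automatic from $\tilclass{F}\subseteq{}_A\tilclass{F}$) together with \emph{equality of cores}, ${}_A\tilclass{F}\cap\rightperp{{}_A\tilclass{F}} = \tilclass{F}\cap\dgclass{C}$. The core equality must be proved directly; the paper does this by invoking the characterization (from the degreewise model structures paper) of both $\rightperp{{}_A\tilclass{F}}$ and $\dgclass{C}$ as the complexes of cotorsion modules receiving only null-homotopic maps from the respective left-hand class, whence both cores are the contractible complexes of flat cotorsion modules. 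Your ``dimension-shift'' argument for $\dgclass{C}\subseteq\rightperp{{}_A\tilclass{F}}$ is attempting to prove something that is not true.

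The second half of your argument is also too thin at the decisive point. Identifying $\class{W}$ with $\rightperp{({}_A\tilclass{P})}$ reduces (via the cited comparison lemmas) to proving ${}_A\tilclass{F}\cap\rightperp{({}_A\tilclass{P})} = \tilclass{F}$, and the inclusion $\subseteq$ genuinely requires Neeman's theorem $\dwclass{F}\cap\rightperp{(\dwclass{P})} = \tilclass{F}$: one embeds $F\in{}_A\tilclass{F}\cap\rightperp{({}_A\tilclass{P})}$ into a sequence $0\to F\to W\to P\to 0$ using the cotorsion pair $(\dwclass{P},\rightperp{(\dwclass{P})})$, deduces $W\in\tilclass{F}$ from Neeman's theorem, checks $P\in{}_A\tilclass{P}$ by tensoring the degreewise split sequence with $A$, and concludes the sequence splits. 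Your substitute --- that ``every complex of flats is a direct limit of complexes of projectives'' so one can ``transfer the $\Ext$-vanishing'' --- does not work ($\Ext^1$ in the first variable does not pass to direct limits this way, and the statement itself is not available at the level of complexes), and your appeal to ``the common homotopy category $K_{\text{A-ac}}(Proj)$'' to show the weak equivalences agree is circular, since that common homotopy category is exactly what is being established.
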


\begin{proof}
We already have that $(\tilclass{F}, \dgclass{C})$ is a complete hereditary cotorsion pair, cogenerated by a set, and the same is true for $({}_A\tilclass{F}, \rightperp{{}_A\tilclass{F}})$ by Theorem~\ref{them-deconstructibility of tensor acyclic complexes}. We have $\tilclass{F} \subseteq {}_A\tilclass{F}$, and so by~\cite{gillespie-hovey triples} we immediately get an hereditary abelian model structure $({}_A\tilclass{F}, \class{W}, \dgclass{C})$ once we show that ${}_A\tilclass{F} \cap \rightperp{{}_A\tilclass{F}} =  \tilclass{F} \cap \dgclass{C}$. But by~\cite[Proposition~3.7]{gillespie-degreewise-model-strucs} we get that $\rightperp{{}_A\tilclass{F}}$ (resp. $\dgclass{C}$) is precisely the class of all complexes of cotorsion modules $C$ such that any map $F \xrightarrow{} C$ with $F \in {}_A\tilclass{F}$ (resp. $F \in \tilclass{F}$) is null homotopic.  We conclude that ${}_A\tilclass{F} \cap \rightperp{{}_A\tilclass{F}}$ (resp. $\tilclass{F} \cap \dgclass{C}$) coincides with the class of all contractible complexes of flat cotorsion modules because any $X \in {}_A\tilclass{F} \cap \rightperp{{}_A\tilclass{F}}$ (resp. $X \in \tilclass{F} \cap \dgclass{C}$) must be a complex of flat cotorsion modules with $1_X \sim 0$.

Now it is a standard fact from~\cite{hovey} that the model structure is cofibrantly generated since the two associated cotorsion pairs are each cogenerated by a set. So it is left to prove that the thick class $\class{W}$ of trivial objects is exactly the same $\class{W}$ from Theorem~\ref{thm-how to create projective on chain}.
This will follow immediately from~\cite[Proposition~3.2]{gillespie-recollement} and~\cite[Lemma~2.3(1)]{gillespie-injective models} once we show ${}_A\tilclass{F} \cap \rightperp{({}_A\tilclass{P})} = \tilclass{F}$.  But ${}_A\tilclass{F} \cap \rightperp{({}_A\tilclass{P})} \supseteq \tilclass{F}$ is clear from Neeman's result, Lemma~\ref{lemma-Neeman} below. Using that same lemma, we now show ${}_A\tilclass{F} \cap \rightperp{({}_A\tilclass{P})} \subseteq \tilclass{F}$. So let $F \in {}_A\tilclass{F} \cap \rightperp{({}_A\tilclass{P})}$. From~\cite{bravo-gillespie-hovey} we have a complete cotorsion pair $(\dwclass{P}, \rightperp{(\dwclass{P})})$, where $\dwclass{P}$ is the class of all complexes of projectives. So we may write a short exact sequence $$0 \xrightarrow{} F  \xrightarrow{} W  \xrightarrow{} P  \xrightarrow{} 0 $$ with $W \in  \rightperp{(\dwclass{P})}$ and $P \in \dwclass{P}$. But then using Lemma~\ref{lemma-Neeman}, one easily argues that $W \in \tilclass{F}$. Since the short exact sequence is degreewise split we obtain a short exact sequence   $$0 \xrightarrow{} A \tensor_R F  \xrightarrow{} A \tensor_R W  \xrightarrow{} A \tensor_R P  \xrightarrow{} 0. $$  We have that $A \tensor_R F$  and $A \tensor_R W$ are each exact and so it follows  that $A \tensor_R P$ is also exact; that is, $P \in {}_A\tilclass{P}$. Therefore the original short exact sequence must split. This forces $F \in \tilclass{F}$ and completes the proof that ${}_A\tilclass{F} \cap \rightperp{({}_A\tilclass{P})} = \tilclass{F}$.
\end{proof}

The above proof heavily relies on the following nontrivial result of Neeman.

\begin{lemma}[Neeman~\cite{neeman-flat}]\label{lemma-Neeman}
Let $R$ be any ring, $\dwclass{P}$ (resp. $\dwclass{F}$) denote the class of all complexes of projectives (resp. flats), and $\tilclass{F}$ denote the class of all categorically flat complexes; that is, exact complexes with all flat cycles. Then $\dwclass{F} \cap \rightperp{(\dwclass{P})} = \tilclass{F}$.
\end{lemma}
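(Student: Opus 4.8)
The plan is to translate the Ext-orthogonality into a statement about the chain homotopy category $K(R)$ and then handle the two inclusions separately, isolating the flatness of the cycles as the genuinely hard point. First I would reformulate the condition $X \in \rightperp{(\dwclass{P})}$. Since every $P \in \dwclass{P}$ is degreewise projective, any short exact sequence $0 \to X \to Y \to P \to 0$ is degreewise split, and degreewise split extension classes are computed by the homology of the total Hom-complex, i.e.\ by homotopy classes of chain maps; thus $\Ext^1_{\ch}(P,X) \cong \Hom_{K(R)}(P,\Sigma^{-1}X)$. As $\dwclass{P}$ is closed under suspensions, this yields the clean reformulation: $X \in \rightperp{(\dwclass{P})}$ if and only if every chain map $P \to X$ with $P \in \dwclass{P}$ is null-homotopic, i.e.\ $\Hom_{K(R)}(P,X) = 0$ for all complexes of projectives $P$.

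For the inclusion $\tilclass{F} \subseteq \dwclass{F} \cap \rightperp{(\dwclass{P})}$, the containment in $\dwclass{F}$ is immediate. Given $X \in \tilclass{F}$, each cycle $Z_{n-1}X$ is flat, so each $0 \to Z_nX \to X_n \to Z_{n-1}X \to 0$ is a pure short exact sequence of modules; that is, $X$ is pure acyclic. For a complex of projectives $P$ and a chain map $f \colon P \to X$, I would build a contracting homotopy degreewise by the usual staircase induction, at each stage lifting a map out of the projective $P_m$ through the epimorphism $X_{m+1} \twoheadrightarrow B_mX = Z_mX$; purity together with projectivity of $P_m$ makes each such lift exist. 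Hence $f \sim 0$, and therefore $X \in \rightperp{(\dwclass{P})}$.

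For the reverse inclusion, let $F \in \dwclass{F} \cap \rightperp{(\dwclass{P})}$. Exactness is easy: taking $P = S^n(R) \in \dwclass{P}$, a direct computation (using $\Hom_{\ch}(S^n(R),F) \cong Z_nF$ and null-homotopy) gives $\Hom_{K(R)}(S^n(R),F) \cong H_n(F)$, so the orthogonality forces $H_n(F) = 0$ for all $n$ and $F$ is exact. It then remains to prove that each cycle $Z_nF$ is flat, equivalently that each sequence $0 \to Z_nF \to F_n \to Z_{n-1}F \to 0$ is pure, since a pure quotient of a flat module is flat.

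This last point is the main obstacle, and is the reason the lemma is genuinely due to Neeman. An acyclic complex $F$ absorbs, up to homotopy, every chain map out of a \emph{bounded-above} complex of projectives, irrespective of whether its cycles are flat, so bounded witnesses detect nothing; the flatness of the cycles can only be read off from \emph{unbounded} complexes of projectives. I would therefore argue contrapositively: if some cycle sequence fails to be pure, there is a finitely presented module and a map into $Z_{n-1}F$ that does not factor through a finitely generated projective, and I would splice this data into an unbounded complex of projectives admitting a chain map to $F$ whose non-null-homotopy witnesses the failure. This delicate unbounded construction is exactly the content of~\cite{neeman-flat}. Combining exactness with the flatness of the cycles gives $F \in \tilclass{F}$, completing the asserted equality $\dwclass{F} \cap \rightperp{(\dwclass{P})} = \tilclass{F}$.
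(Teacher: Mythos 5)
The paper offers no proof of this lemma: it is imported wholesale from Neeman's Theorem~8.6 in~\cite{neeman-flat} and is explicitly flagged in the text as a ``nontrivial result of Neeman.'' Your reformulation of the orthogonality is fine --- extensions by degreewise projective complexes are degreewise split, so $\Ext^1_{\ch}(P,X)\cong\Hom_{K(R)}(P,\Sigma^{-1}X)$, and closure of $\dwclass{P}$ under suspension turns $X\in\rightperp{(\dwclass{P})}$ into the vanishing of $\Hom_{K(R)}(P,X)$ for all $P\in\dwclass{P}$. The deduction of exactness from the spheres $S^n(R)$ is also correct, and you are right that the flatness of the cycles in $\dwclass{F}\cap\rightperp{(\dwclass{P})}\subseteq\tilclass{F}$ is deep content that must be referred to Neeman.

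The genuine gap is in the inclusion you treat as routine, namely $\tilclass{F}\subseteq\rightperp{(\dwclass{P})}$. The staircase induction computes the homotopy $s_m\colon P_m\to X_{m+1}$ from $s_{m-1}$ and $s_{m-2}$, so it needs a degree in which $P$ vanishes in order to start; it only applies to complexes of projectives that are bounded below. (Purity plays no role in the lifting step --- projectivity of $P_m$ alone gives the lift --- and it cannot supply the missing base case.) For unbounded $P$ this is exactly the phenomenon you correctly isolate in the reverse direction (``bounded witnesses detect nothing''), and the inclusion is in fact the \emph{other} nontrivial implication of Neeman's Theorem~8.6. A quick sanity check: applying your claimed inclusion to a complex $P\in\dwclass{P}\cap\tilclass{F}$ gives $\Hom_{K(R)}(P,P)=0$, i.e., that every exact complex of projectives with flat (not necessarily projective) cycles is contractible. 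That statement is a well-known, decidedly non-elementary theorem, so no two-line induction can establish the inclusion. Both halves of the equality $\dwclass{F}\cap\rightperp{(\dwclass{P})}=\tilclass{F}$, not just one, must be credited to~\cite{neeman-flat}.
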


\subsection{The exact AC-acyclic flat model structure}
We now consider again the special case of taking $A$ to be the module from Setup~\ref{setup-choice of A}. Then a complex of flats is $A$-acyclic precisely when it is (exact) AC-acyclic in the sense of Definition~\ref{def-acyclicity}. Moreover,  by Theorem~\ref{them-AC-acyclic-firmly-acyclic}, for complexes of projectives this is equivalent to it being (exact) firmly acyclic. Note that if $R$ is a ring for which all level modules have finite projective dimension, an exact complex of projectives satisfies these properties if and only if it is \textbf{totally acyclic} in the usual sense that it remains exact after applying $\Hom_R(-,Q)$ for any projective module $Q$.
So taking $A$ as in Setup~\ref{setup-choice of A}, the following result was obtained in~\cite{bravo-gillespie-hovey}. Again, we repeat the statement here for the convenience in comparing it to Corollary~\ref{cor-exact-AC-flat-model}.

\begin{corollary}[\cite{bravo-gillespie-hovey}, Theorem~6.7]\label{cor-firmly-acyclic}
Let $R$ be any ring and ${}_A\tilclass{P}$ be the class of all firmly acyclic complexes of projectives, or equivalently, all AC-acyclic complexes of projectives. (See Definition~\ref{def-acyclicity} and Theorem~\ref{them-AC-acyclic-firmly-acyclic}.)
Then there is a cofibrantly generated
abelian model structure on $\ch$ determined by the projective cotorsion pair
$({}_A\tilclass{P}, \class{W})$ where $\class{W} = \rightperp{({}_A\tilclass{P})}$.
The same statements hold by letting ${}_A\tilclass{P}$ be the class of all exact firmly acyclic complexes of projectives, or equivalently, all exact AC-acyclic complexes of projectives.
We call these model structures the
\textbf{(exact) firmly acyclic model structure}, or, alternatively, the
\textbf{(exact) AC-acyclic projective model structure}.  Their homotopy categories
are equivalent to the chain homotopy category of all (exact) firmly acyclic complexes of projectives.
\end{corollary}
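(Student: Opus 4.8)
The plan is to derive this corollary directly from the general machinery of Theorem~\ref{thm-how to create projective on chain}, specializing to the single judicious choice of module $A$ recorded in Setup~\ref{setup-choice of A}. That theorem already guarantees, for \emph{any} right $R$-module $A$, that the class ${}_A\tilclass{P}$ of $A$-acyclic complexes of projectives is the cofibrant half of a projective cotorsion pair $({}_A\tilclass{P}, \class{W})$ with $\class{W} = \rightperp{({}_A\tilclass{P})}$, and that this cotorsion pair underlies a cofibrantly generated abelian model structure whose homotopy category is $K_{\text{A-ac}}(Proj)$. So no new model-category input is needed: the whole content is to translate the class ${}_A\tilclass{P}$ into intrinsic terms under the chosen $A$.

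First I would fix $A$ to be the direct sum of a set $\class{S}$ of representatives through which every absolutely clean right $R$-module is a transfinite extension, as in Setup~\ref{setup-choice of A}. For a complex $X$ of flats we have $A \otimes_R X = \bigoplus_{A' \in \class{S}} (A' \otimes_R X)$, and a direct sum of complexes is exact if and only if each summand is; hence $A$-acyclicity of $X$ is equivalent to exactness of $A' \otimes_R X$ for every $A' \in \class{S}$. To upgrade this to exactness of $A'' \otimes_R X$ for \emph{all} absolutely clean $A''$, I would use that $X$ is degreewise flat, so $- \otimes_R X$ carries any short exact sequence of modules to a short exact sequence of complexes; since exactness of complexes satisfies two-out-of-three, and filtered colimits of exact complexes are exact, a transfinite induction along the defining chain of $A''$ shows $A'' \otimes_R X$ is exact. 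This is precisely AC-acyclicity in the sense of Definition~\ref{def-acyclicity}, so for this $A$ we get ${}_A\tilclass{P}$ equal to the class of AC-acyclic complexes of projectives.

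Next I would invoke Theorem~\ref{them-AC-acyclic-firmly-acyclic}, which for complexes of \emph{projectives} identifies the AC-acyclic condition with the firmly acyclic condition; this supplies the ``equivalently'' clause and justifies the alternative name \textbf{(exact) AC-acyclic projective model structure}. For the exact version, I would enlarge $A$ by adjoining $R$ as a direct summand, exactly as permitted in Setup~\ref{setup-choice of A}: since $R \otimes_R X \cong X$, demanding $A$-acyclicity now forces $X$ itself to be exact \emph{in addition} to being AC-acyclic, so ${}_A\tilclass{P}$ becomes the class of exact AC-acyclic (equivalently exact firmly acyclic) complexes of projectives. The homotopy-category statement in each case is then immediate from the last sentence of Theorem~\ref{thm-how to create projective on chain}, since under the identification $K_{\text{A-ac}}(Proj)$ is exactly the chain homotopy category of (exact) firmly acyclic complexes of projectives.

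The step requiring the most care is the class identification $\,{}_A\tilclass{P} = \{\text{AC-acyclic complexes of projectives}\}$: one must verify that the passage from ``$A' \otimes_R X$ exact for each generator $A' \in \class{S}$'' to ``$A'' \otimes_R X$ exact for all absolutely clean $A''$'' survives transfinite extension. This is precisely where the deconstructibility of the absolutely clean modules (the existence of the set $\class{S}$, cited in Setup~\ref{setup-choice of A}) is essential, together with the degreewise flatness of $X$ that keeps $- \otimes_R X$ exact enough to propagate acyclicity along the filtration. Everything else in the corollary is bookkeeping layered on top of Theorem~\ref{thm-how to create projective on chain} and Theorem~\ref{them-AC-acyclic-firmly-acyclic}.
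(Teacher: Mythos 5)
Your proposal is correct and follows essentially the same route the paper takes: the paper obtains this corollary by specializing Theorem~\ref{thm-how to create projective on chain} to the module $A$ of Setup~\ref{setup-choice of A} (optionally with $R$ adjoined as a summand for the exact version) and then invoking Theorem~\ref{them-AC-acyclic-firmly-acyclic} to translate AC-acyclic into firmly acyclic for complexes of projectives. Your transfinite-induction argument identifying ${}_A\tilclass{P}$ with the AC-acyclic complexes of projectives correctly fills in the detail that Setup~\ref{setup-choice of A} asserts without proof.
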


\begin{definition}\label{def-firm-hocat}
Let $K(R)$ denote the usual chain homotopy category of complexes. Let $K(R)/\class{W}$ be the Verdier quotient by the thick subcategory $\class{W} = \rightperp{({}_A\tilclass{P})}$ of all trivial objects in the exact firmly acyclic model structure. Then Corollary~\ref{cor-firmly-acyclic} tells us $K(R)/\class{W} \cong K_{fir}(Proj)$, where $K_{fir}(Proj)$ denotes the chain homotopy category of all exact firmly acyclic complexes of projectives. If all level $R$-modules have finite projective dimension this coincides with $K_{tac}(Proj)$, the chain homotopy category of all totally acyclic complexes of projectives.
\end{definition}

Keeping the same choice of module $A$, but instead applying Theorem~\ref{thm-how to create flat on chain}, we get the following corollary. We think of it as saying that we can ``mock''  $K_{fir}(Proj)$, and hence in some cases $K_{tac}(Proj)$, using just the flat modules.

\begin{corollary}\label{cor-exact-AC-flat-model}
Let $R$ be any ring and ${}_A\tilclass{F}$ be the class of all AC-acyclic complexes of flat modules. Then there is a cofibrantly generated abelian model structure $({}_A\tilclass{F}, \class{W}, \dgclass{C})$  on $\ch$. Moreover, the thick class $\class{W}$ of trivial objects is exactly the same $\class{W}$ as in the firmly acyclic model structure of Corollary~\ref{cor-firmly-acyclic}. The same statements hold if we instead let ${}_A\tilclass{F}$ denote the class of all exact AC-acyclic complexes of flat modules. We call these model structures the \textbf{(exact) AC-acyclic flat model structure}.
\end{corollary}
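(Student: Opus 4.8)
The plan is to recognize Corollary~\ref{cor-exact-AC-flat-model} as the special case of Theorem~\ref{thm-how to create flat on chain} obtained by choosing $A$ as in Setup~\ref{setup-choice of A}. First I would recall that Setup~\ref{setup-choice of A} gives a single right $R$-module $A$ (the direct sum over a suitable set $\class{S}$ of representatives) with the property that a complex $F$ of flats is $A$-acyclic precisely when it is AC-acyclic, in the sense that $B \tensor_R F$ is exact for every absolutely clean $B$. Hence the class ${}_A\tilclass{F}$ of all $A$-acyclic complexes of flats coincides on the nose with the class of all AC-acyclic complexes of flats. With this identification in hand, Theorem~\ref{thm-how to create flat on chain} directly supplies a cofibrantly generated abelian model structure $({}_A\tilclass{F}, \class{W}, \dgclass{C})$ on $\ch$, and moreover asserts that the thick class $\class{W}$ of trivial objects is \emph{the same} $\class{W}$ appearing in Theorem~\ref{thm-how to create projective on chain}.

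Next I would match that $\class{W}$ against the one in the firmly acyclic model structure. By Corollary~\ref{cor-firmly-acyclic}, the firmly acyclic (equivalently AC-acyclic projective) model structure is the projective cotorsion pair $({}_A\tilclass{P}, \class{W})$ with $\class{W} = \rightperp{({}_A\tilclass{P})}$, for the very same module $A$ (this uses Theorem~\ref{them-AC-acyclic-firmly-acyclic} to identify firmly acyclic with AC-acyclic complexes of projectives). But the $\class{W}$ produced by Theorem~\ref{thm-how to create flat on chain} is explicitly defined as $\rightperp{({}_A\tilclass{P})}$ in Theorem~\ref{thm-how to create projective on chain}. So the two trivial classes are literally the same class, and no further argument is needed beyond citing the coincidence of $A$ and the already-proved equality in Theorem~\ref{thm-how to create flat on chain}.

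Finally, I would handle the ``exact'' variant in one stroke. Setup~\ref{setup-choice of A} already records that by throwing the module $R$ into the direct sum defining $A$, one arranges that $A$-acyclicity becomes exactness together with AC-acyclicity; Corollary~\ref{cor-AC acyclic} confirms that the cotorsion-pair machinery of Theorem~\ref{them-deconstructibility of tensor acyclic complexes} applies verbatim to the exact AC-acyclic class as well. Therefore reapplying Theorem~\ref{thm-how to create flat on chain} with this enlarged $A$ yields the model structure $({}_A\tilclass{F}, \class{W}, \dgclass{C})$ for the exact AC-acyclic flats, and the same reasoning identifies its trivial class with that of the exact firmly acyclic model structure of Corollary~\ref{cor-firmly-acyclic}. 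The only genuine content one must be careful about is that both assertions (existence of the model structure and the identification of $\class{W}$) are invocations of Theorem~\ref{thm-how to create flat on chain} for the two specific choices of $A$; the hard work was already done there, so I expect no real obstacle here — the proof is a two-line specialization, and the main thing to verify is simply that the $A$ from Setup~\ref{setup-choice of A} is the \emph{same} module used in Corollary~\ref{cor-firmly-acyclic}.
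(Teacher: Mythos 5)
Your proposal is correct and is exactly the paper's argument: the paper obtains this corollary by "keeping the same choice of module $A$" from Setup~\ref{setup-choice of A} (with or without the summand $R$ for the exact variant) and applying Theorem~\ref{thm-how to create flat on chain}, whose conclusion already identifies $\class{W}$ with the trivial class of the corresponding $A$-acyclic projective model structure, i.e.\ with the $\class{W}$ of Corollary~\ref{cor-firmly-acyclic}. Nothing further is needed.
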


\begin{corollary}\label{cor-flat-model}
Let $R$ be any ring. Then we have triangle equivalences:
$$K_{fir}(Proj) \,\cong \, K(R)/\class{W} \, \cong \, ({}_A\tilclass{F} \cap \dgclass{C})/\sim$$
where ${}_A\tilclass{F}$ are the exact AC-acyclic complexes of flat modules, $\dgclass{C} = \rightperp{\tilclass{F}}$ are the cotorsion complexes, and $\sim$ is the usual chain homotopy relation.
When all level $R$-modules have finite projective dimension these coincide with $K_{tac}(Proj)$.
\end{corollary}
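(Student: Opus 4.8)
The plan is to establish the three triangle equivalences by chaining together what the model structures of Corollary~\ref{cor-firmly-acyclic} and Corollary~\ref{cor-exact-AC-flat-model} already give us, together with the general principle that the homotopy category of an abelian model structure is the homotopy category (with respect to the chain homotopy relation restricted to cofibrant-fibrant objects) of that structure. First I would recall that for any cofibrantly generated abelian model structure $\class{M} = (\class{C}, \class{W}, \class{F})$, the homotopy category $\Ho(\class{M})$ is equivalent to the category $(\class{C} \cap \class{F})/\sim$ of bifibrant objects modulo the homotopy relation; this is standard model-category theory from~\cite{hovey-model-categories}, and in the abelian case the homotopy relation is exactly the chain homotopy relation. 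Applying this to the exact firmly acyclic projective model structure of Corollary~\ref{cor-firmly-acyclic}, whose cofibrant objects are the exact firmly acyclic complexes of projectives and whose fibrant objects are everything, the bifibrant objects are precisely the exact firmly acyclic complexes of projectives, giving the first equivalence $K_{fir}(Proj) \cong K(R)/\class{W}$, where the Verdier-quotient description is the content of Definition~\ref{def-firm-hocat}.

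The key observation driving the second equivalence is that by Corollary~\ref{cor-exact-AC-flat-model} the exact AC-acyclic flat model structure $({}_A\tilclass{F}, \class{W}, \dgclass{C})$ has the \emph{same} thick class $\class{W}$ of trivial objects as the projective model structure of Corollary~\ref{cor-firmly-acyclic}. Since two abelian model structures on the same abelian category with the same class $\class{W}$ of trivial objects have canonically equivalent homotopy categories (both are the Verdier quotient of $K(R)$ by the thick subcategory $\class{W}$; see~\cite{gillespie-recollement}), we immediately obtain $\Ho(({}_A\tilclass{F}, \class{W}, \dgclass{C})) \cong K(R)/\class{W}$. Then I would identify this homotopy category concretely via the bifibrant objects of the flat model structure: the cofibrant objects are ${}_A\tilclass{F}$, the fibrant objects are $\dgclass{C}$, so the bifibrant objects are exactly ${}_A\tilclass{F} \cap \dgclass{C}$, yielding the third equivalence $K(R)/\class{W} \cong ({}_A\tilclass{F} \cap \dgclass{C})/\sim$. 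The identification $\dgclass{C} = \rightperp{\tilclass{F}}$ and the description of these as the cotorsion complexes comes from the flat model structure of~\cite{gillespie} recalled before Theorem~\ref{thm-how to create flat on chain}.

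The final sentence is then immediate: when all level $R$-modules have finite projective dimension, Definition~\ref{def-firm-hocat} already records that $K_{fir}(Proj) \cong K_{tac}(Proj)$, since in that case an exact complex of projectives is firmly acyclic if and only if it is totally acyclic (this is the last clause of Theorem~\ref{them-AC-acyclic-firmly-acyclic}). I expect the main subtlety to be making precise the sense in which "the same $\class{W}$" yields the same homotopy category; the cleanest route is to invoke that $\Ho(\class{M})$ only depends on $\class{W}$ through the localization $K(R)[\class{W}^{-1}] = K(R)/\class{W}$, so no direct comparison of the bifibrant objects of the two structures is needed. All the heavy lifting—the existence of both model structures and the coincidence of their trivial classes—has already been done in Corollaries~\ref{cor-firmly-acyclic} and~\ref{cor-exact-AC-flat-model}, so this proof is essentially a bookkeeping assembly of prior results rather than a new argument.
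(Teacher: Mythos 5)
Your proposal is correct and is essentially the paper's own (implicit) argument: the corollary is stated without a separate proof precisely because it is the bookkeeping assembly of Corollary~\ref{cor-firmly-acyclic}, Definition~\ref{def-firm-hocat}, and the coincidence of the trivial classes from Corollary~\ref{cor-exact-AC-flat-model}, with each homotopy category identified with its bifibrant objects modulo chain homotopy. Nothing in your route differs from what the paper intends.
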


\section{The coherent case}\label{sec-coherent case}

Now let $R$ be a coherent ring. In this section we show that there are four different abelian model structures, all Quillen equivalent, serving as models for the \emph{projective stable module category of $R$}. For easy reference throughout this section, the four model structures are highlighted below. The main result is Theorem~\ref{them-diagram}.

The \emph{\textbf{exact firmly acyclic model structure}} on $\ch$ from~\cite[Theorem~6.7]{bravo-gillespie-hovey}, as recalled in Corollary~\ref{cor-firmly-acyclic}. It is represented by the projective cotorsion pair $({}_A\tilclass{P}, \class{W})$ where  ${}_A\tilclass{P}$ is the class of all exact firmly acyclic complexes of projectives. Since we assume $R$ is coherent these are exact complexes $P$ of projectives such that $\Hom_R(P,F)$ remains exact for all flat modules $F$. By~\cite[Theorem~6.6]{bravo-gillespie-hovey}, this is equivalent to $A \tensor_R P$ remaining exact for all absolutely pure (right) $R$-modules $A$. In this section we will let $\ch_{proj}$ denote this model structure.

The \emph{\textbf{exact AC-acyclic flat model structure}} on $\ch$, from Corollary~\ref{cor-exact-AC-flat-model}, represented by the Hovey triple $({}_A\tilclass{F}, \class{W}, \dgclass{C})$.  Since $R$ is coherent ${}_A\tilclass{F}$ consists of exact complexes $F$ of flat modules such that $A \tensor_R F$ remains exact for all absolutely pure (right) $R$-modules $A$. In this section we will let $\ch_{flat}$ denote this model structure.

The \emph{\textbf{Gorenstein AC-projective model structure}} on $R$-Mod, from~\cite{bravo-gillespie-hovey}, represented by the projective cotorsion pair $(\class{GP},\rightperp{\class{GP}})$. Here $\class{GP}$ is the class of all Gorenstein AC-projective modules. But since we are assuming $R$ is a coherent ring, these are exactly the Ding projective modules of~\cite{gillespie-Ding-Chen rings}. By definition, they are modules $M$ which appear as $M = Z_0P$, where $P$ is an exact complex of projectives such that $\Hom_R(P,F)$ remains exact for all flat modules $F$. In other words, $M$ is a cycle module of an exact firmly acyclic complex of projectives. In this section we will let  $R\textnormal{-Mod}_{proj}$ denote this model structure.

From~\cite{gillespie-flat stable}, we have the \emph{\textbf{Gorenstein flat model structure}} on $R$-Mod. This is an hereditary abelian model structure represented by a Hovey triple $(\class{GF},\class{W},\class{C})$ where $\class{GF}$ is the class of Gorenstein flat modules and $\class{C} = \rightperp{\class{F}}$ are the cotorsion modules.  Here $\class{F}$ is the class of flat modules and we also let $\class{GC} =  \rightperp{\class{GF}}$ denote the class of Gorenstein cotorsion modules. $\class{W}$ is the smallest thick class of modules containing $\class{F}$ and $\class{GC}$ and it satisfies $\class{GF} \cap \class{W} = \class{F}$ and $\class{W} \cap \class{C} = \class{GC}$.  In this section we will let  $R\textnormal{-Mod}_{flat}$ denote this model structure.

\begin{theorem}\label{them-diagram}
Let $R$ be a coherent ring and let $F$ be the functor $F : \ch \xrightarrow{} R\textnormal{-Mod}$ given by $X \mapsto X_0/B_0X$. Then all four functors in the commutative diagram below are Quillen equivalences:
$$
\begin{CD}
\ch_{proj} @>F>> R\textnormal{-Mod}_{proj} \\
@V1_{\ch}VV @VV 1_{R\textnormal{-Mod}}V \\
\ch_{flat} @>F>> R\textnormal{-Mod}_{flat}
\end{CD}$$
In particular, the homotopy categories of all four model structures are equivalent.
\end{theorem}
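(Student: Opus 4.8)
The plan is to realize the four arrows as left Quillen functors filling a commutative square, prove that three of them are Quillen equivalences, and deduce the fourth from the two-out-of-three property. First I would record the adjunctions and check the Quillen conditions. The two vertical arrows are identity functors, hence their own left adjoints, and $F$ is a left adjoint: the $0$-sphere functor $S^0\colon R\textnormal{-Mod}\to\ch$ gives a natural isomorphism $\Hom_R(X_0/B_0X,\,M)\cong\Hom_{\ch}(X,\,S^0(M))$, so $F\dashv S^0$. The square commutes on the nose, every composite being the single functor $X\mapsto X_0/B_0X$. That the verticals are left Quillen is immediate from the inclusions ${}_A\tilclass{P}\subseteq{}_A\tilclass{F}$ and $\class{GP}\subseteq\class{GF}$ together with the matching of cores, so the identity carries (trivial) cofibrations to (trivial) cofibrations. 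For $F$ the essential point is that it preserves cofibrations: in a cofibration $0\to X\to Y\to Y/X\to 0$ the quotient $Y/X$ is an exact firmly acyclic (resp. AC-acyclic) complex, so $F(Y/X)=(Y/X)_0/B_0(Y/X)\cong Z_{-1}(Y/X)$ is a cycle module of that complex, hence Ding projective (resp. Gorenstein flat), and the homology long exact sequence forces $FX\to FY$ to remain monic. Since $F$ sends the disks $D^n(R)$, and sums of disks on projectives, to projective (resp. flat) modules, it also preserves trivial cofibrations.

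The left-hand vertical is a Quillen equivalence essentially for free. By Corollary~\ref{cor-exact-AC-flat-model} the model structures $\ch_{proj}$ and $\ch_{flat}$ share the \emph{same} class $\class{W}$ of trivial objects; two Hovey triples on one abelian category with equal $\class{W}$ have the same weak equivalences, so the identity, being already a Quillen adjunction, induces an equivalence of homotopy categories, cf.~\cite{gillespie-hovey triples,hovey}.

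The horizontal functors carry the real content. I would identify the total left derived functor $\mathbb{L}F$. By Corollary~\ref{cor-firmly-acyclic} the homotopy category of $\ch_{proj}$ is $K_{fir}(Proj)$, while that of $R\textnormal{-Mod}_{proj}$ is the homotopy category of Ding projective modules; on cofibrant objects $\mathbb{L}F$ is $P\mapsto P_0/B_0P\cong Z_{-1}P$. The claim is that this is the Gorenstein-projective-style equivalence, with quasi-inverse sending a Ding projective module $M$ to a complete resolution, namely an exact firmly acyclic complex of projectives having $M$ as a chosen cycle, which exists by the very definition of Ding projective and is unique up to chain homotopy. Verifying that the cycle functor and the complete-resolution functor are mutually inverse, i.e. that stable maps are computed by homotopy classes $[P,P']$, is exactly the correspondence underlying~\cite{bravo-gillespie-hovey}, and shows the top $F$ is a Quillen equivalence. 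Reading a cycle of an exact AC-acyclic (equivalently $F$-totally acyclic) complex of flats as a Gorenstein flat module, and using Corollary~\ref{cor-flat-model} to identify the homotopy category of $\ch_{flat}$ with $K_{fir}(Proj)$, the same argument, now with complete flat resolutions in the spirit of~\cite{gillespie-flat stable}, shows the bottom $F$ is a Quillen equivalence.

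Finally, the right-hand vertical follows formally. Since $1_{R\textnormal{-Mod}}\circ F=F\circ 1_{\ch}$ and derived functors of composable left Quillen functors compose, $\mathbb{L}(F_{\mathrm{bot}})\circ\mathbb{L}(1_{\ch})$ is an equivalence; as $\mathbb{L}(F_{\mathrm{top}})$ is also an equivalence, two-out-of-three gives that $\mathbb{L}(1_{R\textnormal{-Mod}})$ is an equivalence, so $1_{R\textnormal{-Mod}}$ is a Quillen equivalence (and, as a byproduct, $\rightperp{\class{GP}}=\class{W}$, settling the question from~\cite{gillespie-flat stable}). All four homotopy categories are then equivalent. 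I expect the main obstacle to be the horizontal step: showing the cycle functor realizes the equivalences with the stable categories of Ding projective and of Gorenstein flat modules, in particular that every such module lifts to a complete resolution unique up to homotopy, which is precisely where coherence of $R$ and the completeness of the cotorsion pairs established earlier are used.
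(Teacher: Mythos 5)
Your overall architecture is sound: the adjunction $F\dashv S^0$, the left Quillen checks, the left vertical via equality of trivial classes, the citation for the top horizontal, and the formal two-out-of-three deduction of the remaining arrow are all fine in principle. The genuine gap is the bottom horizontal functor. You dispatch it with ``the same argument, now with complete flat resolutions,'' but the argument for the top $F$ does not transfer. In $\ch_{proj}$ and $R\textnormal{-Mod}_{proj}$ every object is fibrant, so the homotopy categories really are the chain homotopy category of exact firmly acyclic complexes of projectives and the stable category of Ding projectives, and the cycle/complete-resolution correspondence is the classical lifting argument. In the flat model structures the fibrant objects are the (dg-)cotorsion objects, the homotopy category of $R\textnormal{-Mod}_{flat}$ is $(\class{GF}\cap\class{C})/\sim$ for a thick class $\class{W}$ that is only characterized indirectly, and there is no off-the-shelf statement that morphisms there are computed by chain homotopy classes of maps of complete flat resolutions. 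Making this precise is exactly the content of the paper's Proposition~\ref{prop-equiv}, which verifies Hovey's criterion \cite[Corollary~1.3.16(b)]{hovey-model-categories} directly: condition (i) needs that a Gorenstein flat module which is trivial in $R\textnormal{-Mod}_{flat}$ is flat, and condition (ii) needs that $Y_0/B_0Y\in\rightperp{\class{GP}}$ forces $Y_0/B_0Y$ to be trivial in the Gorenstein flat model structure.

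That last point exposes a circularity in your plan. You propose to obtain $\rightperp{\class{GP}}=\class{W}$ (the agreement of trivial objects on the module side, hence the right vertical) as a \emph{byproduct} of the square, but the only available proof of the bottom horizontal uses that equality as an \emph{input}. The paper therefore proves Proposition~\ref{prop-trivial-objects} first --- that $\class{GF}\cap\rightperp{\class{GP}}=\class{F}$ and hence $\rightperp{\class{GP}}=\class{W}$ --- and this is not formal: it runs through Neeman's theorem (Lemma~\ref{lemma-Neeman}) applied to a special approximation $0\to F\to W\to P\to 0$ with $W\in\rightperp{(\dwclass{P})}$ and $P$ a complex of projectives, together with Lemma~\ref{lemma-AP-acyclic} comparing absolutely-pure-acyclicity with injective-acyclicity. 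Only with that in hand do the bottom horizontal (Proposition~\ref{prop-equiv}) and the right vertical (Lemma~\ref{lemma-identity}) follow. To repair your proof you must either supply an independent verification of Hovey's criterion for the bottom $F$ that avoids knowing $\class{W}$, or prove the right vertical first by the Neeman-type argument and then run two-out-of-three in the other direction to get the bottom $F$.
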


We start with the following result, whose proof is similar to that of Theorem~\ref{thm-how to create flat on chain}, relying on Neeman's Lemma~\ref{lemma-Neeman}. But it also relies on Lemma~\ref{lemma-AP-acyclic} which appears afterwards.

\begin{proposition}\label{prop-trivial-objects}
Let $R$ be a coherent ring. Then the trivial objects $\rightperp{\class{GP}}$ of the Gorenstein AC-projective model structure $(\class{GP},\rightperp{\class{GP}})$ (of~\cite{bravo-gillespie-hovey}) coincide with the trivial objects $\class{W}$ in the Gorenstein flat model structure $(\class{GF},\class{W},\class{C})$ (of~\cite{gillespie-flat stable}).
\end{proposition}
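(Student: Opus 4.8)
## Proof Proposal for Proposition~\ref{prop-trivial-objects}

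The plan is to identify both trivial classes $\rightperp{\class{GP}}$ and $\class{W}$ with an intrinsic description and then match them. Recall that $\class{GP}$ is the class of Gorenstein AC-projective (= Ding projective, since $R$ is coherent) modules, which are the cycles $Z_0P$ of exact firmly acyclic complexes of projectives. Meanwhile $\class{GF}$ consists of Gorenstein flat modules, the cycles $Z_0\mathscr{F}$ of exact AC-acyclic (= $F$-totally acyclic) complexes of flats. Since every Gorenstein AC-projective module is in particular Gorenstein flat (a complex of projectives is a complex of flats, and firmly acyclic implies AC-acyclic for such complexes by the duality of absolutely clean and level modules), we have $\class{GP} \subseteq \class{GF}$, and hence $\rightperp{\class{GF}} \subseteq \rightperp{\class{GP}}$. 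The goal is to leverage the equality $\class{GF} \cap \class{W} = \class{F}$ from the Gorenstein flat model structure to get a handle on $\class{W}$.

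First I would recall the characterization of $\class{W}$ from the Hovey triple $(\class{GF}, \class{W}, \class{C})$: since this is an hereditary abelian model structure with $\class{C} = \rightperp{\class{F}}$, the two cotorsion pairs are $(\class{GF}, \class{W} \cap \class{C}) = (\class{GF}, \class{GC})$ and $(\class{GF} \cap \class{W}, \class{C}) = (\class{F}, \class{C})$. A module $W$ lies in $\class{W}$ exactly when, taking any special $\class{GF}$-precover $0 \to W' \to G \to W \to 0$ with $G \in \class{GF}$ and $W' \in \class{GC}$, one can further decompose appropriately; more usefully, $\class{W}$ is the smallest thick class containing $\class{F}$ and $\class{GC}$. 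For the projective side, $\rightperp{\class{GP}}$ is the class of trivial objects in a projective cotorsion pair, so $\class{GP} \cap \rightperp{\class{GP}}$ equals the projectives, and $\rightperp{\class{GP}}$ is thick. The strategy for the inclusion $\class{W} \subseteq \rightperp{\class{GP}}$ is to show both $\class{F} \subseteq \rightperp{\class{GP}}$ and $\class{GC} \subseteq \rightperp{\class{GP}}$, and then invoke thickness of $\rightperp{\class{GP}}$ together with minimality of $\class{W}$. The containment $\class{F} \subseteq \rightperp{\class{GP}}$ should follow from firm acyclicity: for $M = Z_0P \in \class{GP}$ and $F$ flat, $\Ext^1_R(M, F)$ is controlled by the exactness of $\Hom_R(P, F)$, which is precisely the defining condition of firm acyclicity. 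The containment $\class{GC} \subseteq \rightperp{\class{GP}}$ should come from $\rightperp{\class{GF}} \subseteq \rightperp{\class{GP}}$ once we know $\class{GC} = \rightperp{\class{GF}}$, which is exactly the definition given in the setup.

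For the reverse inclusion $\rightperp{\class{GP}} \subseteq \class{W}$, I would argue that any $W \in \rightperp{\class{GP}}$ can be resolved against the Gorenstein flat model structure. Take a special precover sequence $0 \to C \to G \to W \to 0$ with $G \in \class{GF} \cap \class{W} = \class{F}$ (this uses the cotorsion pair $(\class{F}, \class{C})$, giving $G$ flat) and $C \in \class{C}$; actually the cleaner route is to use the cotorsion pair $(\class{GF}, \class{GC})$ to write $0 \to \class{GC} \to \class{GF} \to W \to 0$ and then show the middle Gorenstein flat term lies in $\class{W}$. This is where Lemma~\ref{lemma-AP-acyclic}, advertised as appearing afterwards, and the pattern from the proof of Theorem~\ref{thm-how to create flat on chain} (via Neeman's Lemma~\ref{lemma-Neeman}) will be essential: Neeman's result $\dwclass{F} \cap \rightperp{(\dwclass{P})} = \tilclass{F}$ is what lets one convert vanishing of $\Ext$ against Gorenstein AC-projectives into the flatness conditions defining $\class{W}$. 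The main obstacle I anticipate is precisely this reverse inclusion, namely showing that orthogonality to the cycle modules $\class{GP}$ forces membership in the thick hull $\class{W}$; the difficulty is that $\class{GP}$ and $\class{GF}$ are defined via \emph{complexes} whereas $\class{W}$, $\rightperp{\class{GP}}$ are module classes, so one must lift module-level $\Ext$-vanishing to a complex-level statement. I expect to resolve this by dimension-shifting along a firmly acyclic resolution of a Gorenstein AC-projective module, reducing the problem to the exactness of an appropriate $\Hom$ or tensor complex, and then applying Lemma~\ref{lemma-AP-acyclic} to extract the flat cotorsion / Gorenstein cotorsion decomposition witnessing $W \in \class{W}$.
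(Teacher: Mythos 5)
Your reduction of the proposition to the single claim $\class{GF} \cap \rightperp{\class{GP}} = \class{F}$ is exactly right, and your two-inclusion framing (using that $\class{W}$ is the smallest thick class containing $\class{F}$ and $\class{GC}$, while $\rightperp{\class{GP}}$ is thick and contains both) is a serviceable substitute for the paper's appeal to the general comparison results of~\cite{gillespie-recollement} and~\cite{gillespie-injective models}; both routes funnel into the same key computation. The easy containments are fine as you describe them: $\class{F} \subseteq \rightperp{\class{GP}}$ via exactness of $\Hom_R(P,F)$ for firmly acyclic $P$, and $\class{GC} \subseteq \rightperp{\class{GP}}$ via $\class{GP} \subseteq \class{GF}$ (for which Lemma~\ref{lemma-AP-acyclic} is what lets one pass from firm acyclicity, i.e.\ acyclicity against absolutely pure modules, to acyclicity against injectives and hence Gorenstein flatness).

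The gap is in the hard inclusion $\class{GF} \cap \rightperp{\class{GP}} \subseteq \class{F}$, which carries the entire technical weight of the proposition and which you only gesture at. Your proposed mechanism --- ``dimension-shifting along a firmly acyclic resolution of a Gorenstein AC-projective module'' --- points in the wrong direction: given $M \in \class{GF} \cap \rightperp{\class{GP}}$ you have no Gorenstein AC-projective module in hand to test $M$ against; you must manufacture one. The paper does this as follows. Write $M = Z_0F$ with $F$ a complete flat resolution; by Lemma~\ref{lemma-AP-acyclic}, $A \otimes_R F$ is exact for all absolutely pure $A$. Use the complete cotorsion pair $(\dwclass{P}, \rightperp{(\dwclass{P})})$ to produce a degreewise split short exact sequence $0 \to F \to W \to P \to 0$ with $P$ a complex of projectives and, by Neeman's Lemma~\ref{lemma-Neeman}, $W$ categorically flat. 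Degreewise splitness transfers exactness and $A$-acyclicity from $F$ and $W$ to $P$, so $Z_0P$ is Ding projective; the hypothesis $M = Z_0F \in \rightperp{\class{GP}}$ then splits $0 \to Z_0F \to Z_0W \to Z_0P \to 0$, exhibiting $M$ as a direct summand of the flat module $Z_0W$, hence flat. Without this construction (or an equivalent one) your proposal does not close.
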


\begin{proof}
As already pointed out, since $R$ is coherent, a module $M$ is Gorenstein AC-projective (Ding projective) if and only if $M = Z_0P$, where $P$ is an exact complex of projectives such that $A \tensor_R P$ is exact for all absolutely pure (right) $R$-modules $A$. If follows from Lemma~\ref{lemma-AP-acyclic} that $M$ is Gorenstein flat. So $\class{GP} \subseteq \class{GF}$. Hence $\class{GC} \subseteq \rightperp{\class{GP}}$, where $\class{GC} = \rightperp{\class{GF}}$ are the Gorenstein cotorsion modules. Also, $\rightperp{\class{GP}}$ is already known to be thick from~\cite{bravo-gillespie-hovey}. With these observations, the claim will follow immediately from~\cite[Proposition~3.2]{gillespie-recollement} combined with~\cite[Lemma~2.3(1)]{gillespie-injective models} once we show $\class{GF} \cap \rightperp{\class{GP}} = \class{F}$, where $\class{F}$ is the class of all flat modules.

It is easy to see that $\class{GF} \cap \rightperp{\class{GP}} \supseteq \class{F}$, so we focus on
$\class{GF} \cap \rightperp{\class{GP}} \subseteq \class{F}$. Let $M \in \class{GF} \cap \rightperp{\class{GP}}$, and write it as $M = Z_0F$ where $F$ is a complete flat resolution of $M$. That is, $I \tensor_R F$ remains exact for all injectives $I$. But again from Lemma~\ref{lemma-AP-acyclic} it is true that $A \tensor_R F$ remains exact for all absolutely pure modules $A$. From~\cite{bravo-gillespie-hovey} we have a complete cotorsion pair $(\dwclass{P}, \rightperp{(\dwclass{P})})$, where $\dwclass{P}$ is the class of all complexes of projectives. So we may write a short exact sequence $$0 \xrightarrow{} F  \xrightarrow{} W  \xrightarrow{} P  \xrightarrow{} 0 $$ with $W \in  \rightperp{(\dwclass{P})}$ and $P \in \dwclass{P}$. But then using Lemma~\ref{lemma-Neeman}, one easily argues that $W \in \tilclass{F}$, the class of all exact complexes with all cycle modules flat. Since $F$ and $W$ are each exact, we see that $P$ is exact too. Moreover,  the short exact sequence is split in each degree and so for any absolutely pure $A$ we have a short exact sequence   $$0 \xrightarrow{} A \tensor_R F  \xrightarrow{} A \tensor_R W  \xrightarrow{} A \tensor_R P  \xrightarrow{} 0. $$ It is now clear that $A \tensor_R P$ is also exact, equivalently $\Hom_R(P,F)$ is exact for all flats, and so $Z_0P$ is Ding projective. Note that since each complex is exact we have a short exact sequence $0 \xrightarrow{} Z_0F  \xrightarrow{} Z_0W  \xrightarrow{} Z_0P  \xrightarrow{} 0$. By the hypothesis, $Z_0F \in \rightperp{\class{GP}}$, and so we conclude  that this sequence splits. Since $Z_0W$ is flat, so is the direct summand $Z_0F$, proving $\class{GF} \cap \rightperp{\class{GP}} \subseteq \class{F}$.
\end{proof}

The following lemma is crucial to the above proof but also for what is still to come. It goes back to~\cite[Lemma~2.8]{ding and mao 08}. However we provide a new proof which is quick and easy.

\begin{lemma}\label{lemma-AP-acyclic}
Let $R$ be any ring and $X$ a (not necessarily exact) complex of flat modules. Then $A \otimes_R X$ is exact for all absolutely pure (right) $R$-modules $A$ if and only if $I \otimes_R X$ is exact for all injective (right) $R$-modules $I$.
\end{lemma}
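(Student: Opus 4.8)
The plan is to reduce both conditions to a statement about the \emph{character dual} complexes and then exploit the fact that absolutely pure modules are exactly the pure submodules of injectives. The forward implication is immediate: every injective right $R$-module is absolutely pure, so if $A \tensor_R X$ is exact for all absolutely pure $A$, it is in particular exact for all injective $I$. The content lies in the reverse implication, so I would focus there.

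For the reverse direction, I would first record the standard adjunction isomorphism of complexes $(A \tensor_R X)^+ \cong \Hom_R(X, A^+)$, where $(-)^+ = \Hom_{\Z}(-,\Q)$ and $A^+$ is the character module. Since $\Q$ is an injective cogenerator of $\Z$-Mod, the functor $(-)^+$ is faithfully exact, so a complex of abelian groups is exact if and only if its character dual is exact. Consequently $A \tensor_R X$ is exact if and only if $(A\tensor_R X)^+ \cong \Hom_R(X, A^+)$ is exact. This reformulation is what lets me transfer direct-sum decompositions of character modules directly into the homological condition on $X$.

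Next I would invoke two standard facts (see~\cite{enochs-jenda-book}): first, an absolutely pure module $A$ is a pure submodule of its injective envelope $I = E(A)$, giving a pure short exact sequence $0 \to A \to I \to I/A \to 0$ of right modules; and second, a short exact sequence is pure exact if and only if its character dual sequence splits. Applying $(-)^+$ to the pure sequence therefore yields a \emph{split} short exact sequence $0 \to (I/A)^+ \to I^+ \to A^+ \to 0$, so that $I^+ \cong A^+ \oplus (I/A)^+$. Applying $\Hom_R(X,-)$ gives $\Hom_R(X, I^+) \cong \Hom_R(X, A^+) \oplus \Hom_R(X, (I/A)^+)$, i.e. $(I \tensor_R X)^+ \cong (A \tensor_R X)^+ \oplus ((I/A)\tensor_R X)^+$. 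By hypothesis $I \tensor_R X$ is exact (as $I$ is injective), hence $(I\tensor_R X)^+$ is exact, and a direct summand of an exact complex is exact; therefore $(A \tensor_R X)^+$ is exact, which by the reformulation means $A \tensor_R X$ is exact, as required.

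The key observation that makes this argument short—rather than a genuine obstacle—is the second standard fact, that purity of a short exact sequence is equivalent to the splitting of its character dual. Once this is in hand the proof collapses to the trivial remark that a direct summand of an exact complex is exact; the only other ingredients are the character-module adjunction and the definition of absolute purity as being a pure submodule of an injective. I would also note that flatness of the terms $X_n$ is never actually used, so the same argument in fact proves the statement for an arbitrary complex $X$.
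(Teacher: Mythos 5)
Your proof is correct, but it takes a genuinely different route from the paper's. The paper argues directly with tensor products: it first reduces exactness of $A \otimes_R X$ to the injectivity of each map $A \otimes_R (X_{n+1}/B_{n+1}X) \to A \otimes_R X_n$ via the factorization lemma (Lemma~\ref{lemma-A-acyclic}), then embeds $A$ purely into an injective $I$ and runs a small diagram chase: the horizontal maps $A \otimes_R M \to I \otimes_R M$ are monomorphisms by purity, the right vertical map is a monomorphism because $I \otimes_R X$ is exact, hence so is the left vertical map. You instead dualize: you convert both conditions into exactness of $\Hom_R(X, -)$ applied to character modules, use the fact that the character dual of the pure sequence $0 \to A \to E(A) \to E(A)/A \to 0$ splits to exhibit $A^+$ as a direct summand of $I^+$, and conclude by noting that a direct summand of an exact complex is exact. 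Both arguments hinge on the same single input --- that an absolutely pure module sits purely inside an injective --- but exploit purity in two different equivalent guises (injectivity of induced tensor maps versus splitting of the character dual). The paper's version is more elementary and reuses machinery (the $A$-acyclicity lemma) already set up for Theorem~\ref{them-deconstructibility of tensor acyclic complexes}; yours is slicker and fits naturally with the character-module duality already invoked in Section~\ref{subsec-absclean-level} for absolutely clean and level modules. Your closing observation that flatness of the terms $X_n$ is never used is accurate, and in fact the same is true of the paper's proof.
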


\begin{proof}
Note that the ``only if'' part is trivial. Conversely, suppose that $$X = \cdots \rightarrow X_1 \rightarrow X_0 \rightarrow X^0 \rightarrow X^1 \rightarrow \cdots$$ is a complex of flat modules which remains exact after applying $I \otimes_R -$ for any injective $I$.  We must show that this complex does in fact remain exact after applying $A \otimes_R -$ for any absolutely pure $A$. So let such an $A$ be given and note that by Lemma~\ref{lemma-A-acyclic} it is equivalent to show that, for each $n$, the map below is a monomorphism:
$$A \otimes_R \frac{X_{n+1}}{B_{n+1}X} \xrightarrow{} A \otimes_R X_n$$
Now let $A \hookrightarrow I$ be an embedding into an injective module $I$ and note that this must be a pure monomorphism since $A$ is a absolutely pure. For each $n$ we have the commutative diagram:
$$\begin{CD}
A \otimes_R \frac{X_{n+1}}{B_{n+1}X}  @>>> I \otimes_R \frac{X_{n+1}}{B_{n+1}X} \\
@VVV @VVV       \\
A \otimes_R X_n @>>> I \otimes_R X_n \\
\end{CD}$$
The two horizontal arrows are monomorphisms since $A \hookrightarrow I$ is pure. The right vertical arrow is also a monomorphism since $I \otimes_R X$ is exact.  It follows that the left vertical arrow must also be a monomorphism. This completes the proof. (The last statement of the lemma holds since for coherent rings the absolutely clean modules coincide with the absolutely pure modules.)
\end{proof}

It now follows from the following simple lemma that the identity functors making up the vertical arrows of the diagram in Theorem~\ref{them-diagram} are Quillen equivalences.

\begin{lemma}\label{lemma-identity}
Let $\class{A}$ be an abelian category with two model structures $\class{M} = (\class{Q}, \class{W}, \class{R})$ and $\class{M}' = (\class{Q}', \class{W}, \class{R}')$, each with the same class  $\class{W}$ of trivial objects. If $\class{Q} \subseteq \class{Q}'$, then the identity functor $1_{\class{A}}$ is a (left) Quillen equivalence from $\class{M}$ to $\class{M}'$. Equivalently, if $\class{R}' \subseteq \class{R}$,  then the identity functor $1_{\class{A}}$ is a right Quillen equivalence from $\class{M}'$ to $\class{M}$.
\end{lemma}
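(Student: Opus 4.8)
The plan is to invoke Hovey's correspondence from Section~\ref{subsec-abelian model cats} to see first that $1_{\class{A}}$ gives a Quillen adjunction, and then to promote it to a Quillen equivalence using the principle that the weak equivalences of an abelian model structure are controlled entirely by its class of trivial objects. Before anything else I would observe that the two hypotheses $\class{Q} \subseteq \class{Q}'$ and $\class{R}' \subseteq \class{R}$ are genuinely interchangeable. Each Hovey triple encodes two cotorsion pairs, so $\class{R} = \rightperp{(\class{Q} \cap \class{W})}$ and $\class{Q} = \leftperp{(\class{W} \cap \class{R})}$, and similarly for the primed data; since both structures share the same $\class{W}$, applying $\rightperp{(-)}$ to $\class{Q} \cap \class{W} \subseteq \class{Q}' \cap \class{W}$ yields $\class{R}' \subseteq \class{R}$, while applying $\leftperp{(-)}$ to $\class{W} \cap \class{R}' \subseteq \class{W} \cap \class{R}$ yields $\class{Q} \subseteq \class{Q}'$. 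Thus I may freely assume both inclusions.

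Next I would verify the Quillen adjunction. In an abelian model structure the cofibrations (resp. trivial cofibrations) are precisely the monomorphisms with cokernel in $\class{Q}$ (resp. $\class{Q} \cap \class{W}$), and the fibrations (resp. trivial fibrations) are the epimorphisms with kernel in $\class{R}$ (resp. $\class{R} \cap \class{W}$). From $\class{Q} \subseteq \class{Q}'$ and $\class{Q} \cap \class{W} \subseteq \class{Q}' \cap \class{W}$, every $\class{M}$-cofibration is an $\class{M}'$-cofibration and every $\class{M}$-trivial cofibration is an $\class{M}'$-trivial cofibration, so $1_{\class{A}} \colon \class{M} \to \class{M}'$ preserves both; dually, $\class{R}' \subseteq \class{R}$ shows the self-adjoint partner $1_{\class{A}} \colon \class{M}' \to \class{M}$ preserves fibrations and trivial fibrations. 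Because the identity is its own adjoint with identity unit and counit, this is exactly the data of a Quillen adjunction $(1_{\class{A}}, 1_{\class{A}})$.

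The heart of the argument is the Quillen equivalence, and here the decisive input is the characterization of weak equivalences: a morphism of $\class{A}$ is a weak equivalence for $(\class{Q}, \class{W}, \class{R})$ if and only if it factors as a monomorphism with cokernel in $\class{W}$ followed by an epimorphism with kernel in $\class{W}$. This description refers only to $\class{W}$, so $\class{M}$ and $\class{M}'$ have the very same weak equivalences. Consequently $\Ho(\class{M})$ and $\Ho(\class{M}')$ both arise as the localization of $\class{A}$ at this single class, and the derived functors $L(1_{\class{A}})$ and $R(1_{\class{A}})$ descend to mutually inverse equivalences; equivalently, Hovey's criterion is met, since for $X$ cofibrant in $\class{M}$ and $Y$ fibrant in $\class{M}'$ the map $X \to Y$ and its adjunct agree and are simultaneously $\class{M}$- and $\class{M}'$-weak equivalences. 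I expect the only real obstacle to be this weak-equivalence characterization; all the rest is bookkeeping with Hovey's dictionary. I would supply it either by citing the existing literature on exact and abelian model structures (in the spirit of the results already referenced in Section~\ref{subsec-abelian model cats}), or, for a self-contained treatment, by building the required factorization from the functorial cofibration--trivial-fibration and trivial-cofibration--fibration factorizations coming from the two complete cotorsion pairs and using the thickness of $\class{W}$ to see that the resulting notions coincide.
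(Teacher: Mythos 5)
Your proposal is correct and follows essentially the same route as the paper's proof: self-adjointness of the identity, equivalence of the two hypotheses via the shared $\class{W}$, preservation of (trivial) cofibrations from $\class{Q}\subseteq\class{Q}'$, and then the key observation that Hovey's characterization of weak equivalences (monomorphism with trivial cokernel followed by epimorphism with trivial kernel) depends only on $\class{W}$, so the two structures share weak equivalences and the Quillen equivalence follows from the definition. The one input you flag as needing a citation is exactly what the paper cites, namely \cite[Lemma~5.8]{hovey}.
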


\begin{proof}
 Note that $1_{\class{A}}$ is certainly left adjoint to itself. It is easy to see that $\class{Q} \subseteq \class{Q}'$ if and only if $\class{R}' \subseteq \class{R}$ since $\class{W}$ is the same in both model structures. So the two statements are equivalent. Now if $\class{Q} \subseteq \class{Q}'$, then $1_{\class{A}}$ from $\class{M}$ to $\class{M}'$ preserves cofibrant and trivially cofibrant objects. Hence it preserves all cofibrations and trivial cofibrations, making it a left Quillen functor.  Now in any abelian model structure, a map $f$ is a weak equivalence if and only if it factors as a monomorphism with trivial cokernel followed by an epimorphism with trivial kernel.~\cite[Lemma~5.8]{hovey}. It now follows from the very definition, see~\cite[Definition~1.3.12]{hovey-model-categories}, that $1_{\class{A}}$ is a Quillen equivalence.
\end{proof}

We now turn to the horizontal functor $F$ in Theorem~\ref{them-diagram}. It was already shown in~\cite[Theorem~8.8]{bravo-gillespie-hovey} that  $F$ in the top of the diagram is a Quillen equivalence. We now show that it extends to a Quillen equivalence between the flat model structures in the bottom of the diagram.

\begin{proposition}\label{prop-equiv}
Let $R$ be a coherent ring. The functor $F : \ch \xrightarrow{} R\text{-Mod}$ defined by $F(X) = X_0/B_0X$ is a Quillen equivalence from the exact AC-acyclic flat model structure (of Corollary~\ref{cor-exact-AC-flat-model}) to the Gorenstein flat model structure (of~\cite{gillespie-flat stable}).
\end{proposition}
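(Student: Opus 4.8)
The plan is to exhibit $F$ as the left adjoint of an adjunction $(F, S^0)$, verify that this adjunction is a Quillen adjunction between the two flat model structures, and then deduce that it is a Quillen \emph{equivalence} from the commutativity of the square in Theorem~\ref{them-diagram} together with the three Quillen equivalences already in hand. First I would record the adjunction: a chain map $X \to S^0(M)$ is precisely a map $X_0 \to M$ annihilating $B_0X$, so $\Hom_{\ch}(X, S^0 M) \cong \Hom_R(X_0/B_0X, M) = \Hom_R(FX, M)$, whence $F \dashv S^0$. The other three edges of the square are already known to be Quillen equivalences: the top $F$ by~\cite[Theorem~8.8]{bravo-gillespie-hovey}; the left vertical identity by Lemma~\ref{lemma-identity}, since the two chain-level structures share the same class $\class{W}$ by Corollary~\ref{cor-exact-AC-flat-model} and ${}_A\tilclass{P} \subseteq {}_A\tilclass{F}$; and the right vertical identity by Lemma~\ref{lemma-identity} together with Proposition~\ref{prop-trivial-objects}, since they share the same $\class{W}$ and $\class{GP} \subseteq \class{GF}$.

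To see that the bottom $F$ is a Quillen adjunction I would instead check that the right adjoint $S^0$ is right Quillen. Since $S^0$ is exact, and in any abelian model structure the fibrations (resp.\ trivial fibrations) are exactly the epimorphisms whose kernel is fibrant (resp.\ trivially fibrant) by~\cite{hovey}, it suffices to verify that $S^0$ carries fibrant objects to fibrant objects and trivially fibrant objects to trivially fibrant objects. Unravelling the two Hovey triples, this amounts to two statements: (1) if $M$ is cotorsion then $S^0 M \in \dgclass{C}$, and (2) if $M$ is Gorenstein cotorsion, i.e.\ $M \in \rightperp{\class{GF}}$, then $S^0 M \in \rightperp{{}_A\tilclass{F}} = \class{W}\cap\dgclass{C}$. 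Both are $\Ext^1$-vanishing statements, tested against categorically flat complexes $V \in \tilclass{F}$ and against exact AC-acyclic complexes of flats $V \in {}_A\tilclass{F}$ respectively.

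Both vanishings follow from a single computation. Applying $\Hom_{\ch}(V,-)$ to $0 \to S^0 M \to D^1 M \to S^1 M \to 0$ and using $\Ext^1_{\ch}(V, D^1 M) \cong \Ext^1_R(V_0, M) = 0$ (as $V_0$ is flat and $M$ is cotorsion), I would identify $\Ext^1_{\ch}(V, S^0 M)$ with the cokernel of the restriction map $\Hom_R(V_0, M) \to \Hom_R(Z_0 V, M)$ induced by $Z_0 V \hookrightarrow V_0$, using $V_1/B_1V \cong Z_0 V$ for exact $V$. This cokernel embeds into $\Ext^1_R(V_0/Z_0V, M) \cong \Ext^1_R(Z_{-1}V, M)$. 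For (1), $Z_{-1}V$ is flat and $M$ is cotorsion, so the group vanishes; for (2), the cycle $Z_{-1}V$ of the $\mathbf{F}$-totally acyclic complex $V$ is Gorenstein flat, and $M \in \rightperp{\class{GF}}$ forces the group to vanish. Hence $\Ext^1_{\ch}(V, S^0 M) = 0$ in both cases. I expect step (2) to be the main obstacle, since it is exactly where the coherence hypothesis and the defining property of Gorenstein flats (every cycle of an $\mathbf{F}$-totally acyclic complex is Gorenstein flat) are genuinely used; step (1) is the familiar fact that spheres on cotorsion modules are dg-cotorsion.

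Finally, with the bottom $F$ a Quillen adjunction, I would conclude by two-out-of-three. The square commutes on the nose, both composites being the single functor $F \colon \ch \to R\text{-Mod}$, so the induced square of total left-derived functors on homotopy categories commutes; three of its four edges are equivalences, whence the total left derived functor of the bottom $F$ is an equivalence and $F$ is a Quillen equivalence.
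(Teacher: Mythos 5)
Your proof is correct, but it reaches the conclusion by a genuinely different route than the paper. The first half is essentially the same mathematics in dual packaging: the paper checks directly that $F$ preserves cofibrations and trivial cofibrations (a monomorphism with exact cokernel $C$ stays exact under $F$, and $C_0/B_0C \cong Z_{-1}C$ is Gorenstein flat by Lemma~\ref{lemma-AP-acyclic}), while you check that $S^0$ preserves fibrant and trivially fibrant objects via the sequence $0 \to S^0M \to D^1M \to S^1M \to 0$ and the embedding of $\Ext^1_{\ch}(V,S^0M)$ into $\Ext^1_R(Z_{-1}V,M)$; both verifications hinge on the same fact that cycles of exact AC-acyclic complexes of flats are Gorenstein flat over a coherent ring. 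The real divergence is in upgrading the Quillen adjunction to a Quillen equivalence. The paper verifies Hovey's criterion~\cite[Corollary~1.3.16(b)]{hovey-model-categories} head-on: part (i) requires a factorization argument showing that a fibration whose $F$-image is a weak equivalence has categorically flat kernel, and part (ii) requires~\cite[Lemma~8.1]{bravo-gillespie-hovey} together with Proposition~\ref{prop-trivial-objects}. You instead deduce the equivalence by two-out-of-three in the commuting square of Theorem~\ref{them-diagram}, using that $\mathbf{L}$ of a composite of left Quillen functors is the composite of the derived functors and that a left Quillen functor whose total left derived functor is an equivalence is a Quillen equivalence. This is not circular, since the other three edges are established independently of Proposition~\ref{prop-equiv} (the top by~\cite[Theorem~8.8]{bravo-gillespie-hovey}, the verticals by Lemma~\ref{lemma-identity} with Corollary~\ref{cor-exact-AC-flat-model} and Proposition~\ref{prop-trivial-objects} respectively). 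Your route is shorter and outsources the hard analysis to the already-proved projective-side equivalence; the paper's route is self-contained on the flat side and yields explicit control of the derived counit $FQS^0(C) \to C$, which is more informative if one wants to compute in the homotopy category. Note that both arguments ultimately lean on Proposition~\ref{prop-trivial-objects}, yours through the right vertical edge and the paper's through step (ii).
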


\begin{proof}
Note that for any ring $R$, the functor $F$ is a left adjoint. Its right adjoint is the functor $S^0$ which turns a module into a complex concentrated in degree zero.
Recall that a left Quillen functor is a left adjoint that preserves cofibrations and trivial cofibrations. To see that $F$ is left Quillen, let $X \xrightarrow{f} Y$ be a cofibration in the exact AC-acyclic flat model structure. Then by definition we have a short exact sequence $0 \xrightarrow{} X  \xrightarrow{f} Y \xrightarrow{} C \xrightarrow{} 0$ with $C$ an exact AC-acyclic complex of flat modules. The functor $F$ is only left exact in general, but since $C$ is exact we do get a short exact sequence $0 \xrightarrow{} X_0/B_0X  \xrightarrow{F(f)} Y_0/B_0Y \xrightarrow{} C_0/B_0C \xrightarrow{} 0$. We have $C_0/B_0C \cong Z_{-1}C$ which is Gorenstein flat by Lemma~\ref{lemma-AP-acyclic}. Since $F(f)$ is a monomorphism with Gorenstein flat cokernel, $F$ preserves cofibrations. The same argument shows that $F$ preserves trivial cofibrations; we just replace $C$ with a categorically flat complex, which is one with flat cycles.

To show that $F$ is a Quillen equivalence we use~\cite[Corollary~1.3.16(b)]{hovey-model-categories}. In our case it means we must prove the following: (i)  If $X \xrightarrow{f} Y$ is a chain map between two exact AC-acyclic complexes of flats for which $X_0/B_0X \xrightarrow{F(f)} Y_0/B_0Y$ is a weak equivalence, then $f$ itself must be a weak equivalence. (ii) For all cotorsion modules $C$, the map $FQS^0(C) \xrightarrow{} C$, where $Q$ is cofibrant replacement, is a weak equivalence.

To prove (i), we use the factorization axiom to write $f = pi$ where $X \xrightarrow{i} Z$ is a trivial cofibration and $Z \xrightarrow{p} Y$ is a fibration. We note that we have short exact sequences $0 \xrightarrow{} X  \xrightarrow{i} Z \xrightarrow{} C \xrightarrow{} 0$ and  $0 \xrightarrow{} K  \xrightarrow{} Z \xrightarrow{p} Y \xrightarrow{} 0$; since $C$ is categorically flat it is exact AC-acyclic and hence $Z$ and therefore $K$ must be too. Applying $F$ to this factorization gives us short exact sequences
$0 \xrightarrow{} X_0/B_0X  \xrightarrow{F(i)} Z_0/B_0Z \xrightarrow{} C_0/B_0C \xrightarrow{} 0$ and  $0 \xrightarrow{} K_0/B_0K  \xrightarrow{} Z_0/B_0Z \xrightarrow{F(p)} Y_0/B_0Y \xrightarrow{} 0$ and a factorization $F(f) = F(p)F(i)$. As already shown above, $F(i)$ is a trivial cofibration. Also $F(f)$ is a weak equivalence by hypothesis. So by the two out of three axiom, $F(p)$ must also be a weak equivalence. Being a surjection, it means that $\ker{(F(p))} = K_0/B_0K$ must be trivial. But since $K$ is exact AC-acyclic, $\ker{(F(p))}$ must also be Gorenstein flat by Lemma~\ref{lemma-AP-acyclic}.  This means $\ker{(F(p))}$ is trivially cofibrant; that is, a flat module. But the class of trivial modules is thick and contains the flat modules, and hence each $Z_nK$  must be trivially cofibrant (flat). This proves that $\ker{p}$ is a flat complex, and so $p$ is a trivial fibration.

To prove (ii), let $C$ be a cotorsion module. To get a cofibrant replacement of $S^0C$ in the exact AC-acyclic flat model structure,  we use enough projectives of the cotorsion pair
$({}_A\tilclass{F}, \rightperp{{}_A\tilclass{F}})$. This gives us a short exact sequence
$0 \xrightarrow{} Y \xrightarrow{} F \xrightarrow{} S^0C \xrightarrow{} 0$ with $F$ an exact AC-acyclic complex of flats and $Y \in \rightperp{{}_A\tilclass{F}}$. Then by~\cite[Lemma~8.1]{bravo-gillespie-hovey} we get that $Y_0/B_0Y \in \rightperp{\class{GP}}$, where $\class{GP}$ are the Gorenstein AC-projective modules. Applying $F$ to the short exact sequence gives us, using the snake lemma, another short exact sequence $0 \xrightarrow{} Y_0/B_0Y \xrightarrow{} F_0/B_0F \xrightarrow{} C \xrightarrow{} 0$. The problem is to show that $F_0/B_0F \xrightarrow{} C$ is a weak equivalence. Being an epimorphism, it is enough to show its kernel is trivial in the Gorenstein flat model structure. But this is indeed the case, as we show directly in Proposition~\ref{prop-trivial-objects} that $\rightperp{\class{GP}}$ is the class of trivial objects in this model structure.
\end{proof}

Putting this all together we now have proved Theorem~\ref{them-diagram}:

\begin{proof}[Proof of Theorem~\ref{them-diagram}]
The functor $F$ in the top of the diagram is a Quillen equivalence from the \emph{exact firmly acyclic model structure} to the \emph{Gorenstein AC-projective model structure} by~\cite[Theorem~8.8]{bravo-gillespie-hovey}. By Proposition~\ref{prop-equiv}, $F$ is also a Quillen equivalence from the \emph{exact AC-acyclic flat model structure} to the \emph{Gorenstein flat model structure} of~\cite{gillespie-flat stable}. The left vertical identity functor is a Quillen equivalence by Lemma~\ref{lemma-identity} and Corollary~\ref{cor-exact-AC-flat-model}. Similarly, the right vertical identity functor is a Quillen equivalence by Lemma~\ref{lemma-identity} and Proposition~\ref{prop-trivial-objects}.
\end{proof}

\section{Acyclicity for complexes of flat quasi-coherent sheaves}\label{sec-sheaves}

We now let $X$ be a scheme with structure sheaf $\mathscr{R}$. A ``sheaf'' will always mean a quasi-coherent sheaf. That is, all sheaves will be assumed to be quasi-coherent without further mention, and $\Qco(X)$ will denote the category of all such sheaves on $X$. We let Ch$(\Qco(X))$ denote the associated category of (unbounded) chain complexes. It is known that $\Qco(X)$ and Ch$(\Qco(X))$ are both Grothendieck categories; for example, see~\cite{ee-quasi-coherent}. As such they are also known to be locally $\lambda$-presentable, in the sense of~\cite{adamek-rosicky}, for some regular cardinal $\lambda$.

\subsection{Preliminary results on complexes of sheaves} The following definition of purity in Ch$(\Qco(X))$ will prove useful for our purposes. 

\begin{definition}\label{def-pur}
Let $\mathscr{F}_{\bullet}$ be a chain complex of sheaves. A subcomplex $\mathscr{P}_{\bullet}\subseteq \mathscr{F}_{\bullet}$ will be called a \textbf{pure subcomplex} if for every open affine $U\subseteq X$, the subcomplex $\mathscr{P}_{\bullet}(U)\subseteq \mathscr{F}_{\bullet}(U)$ of $\mathscr{R}(U)$-modules is a pure subcomplex in the sense of Definition~\ref{def-purity}.
We will also say that $\mathscr{F}_{\bullet}/\mathscr{P}_{\bullet}$ is a \textbf{pure quotient} and refer to
a short exact sequence $0\to \mathscr{P}_{\bullet}\to \mathscr{F}_{\bullet}\to \mathscr{F}_{\bullet}/\mathscr{P}_{\bullet}\to 0$ as \textbf{pure} whenever it corresponds to a pure subcomplex in this sense.
\end{definition}

The following lemma extends Lemma~\ref{lemma-pure cardinality}. Assume here that $\lambda$ is a regular cardinal for which  Ch$(\Qco(X))$ is locally $\lambda$-presentable.

\begin{lemma}\label{pure.subcomp.sheaf}
There exists a regular cardinal $\gamma > \lambda$ so that given any complex of sheaves $\mathscr{X}_{\bullet}$ and $\mathscr{S}_{\bullet}\subseteq \mathscr{X}_{\bullet}$, with $\mathscr{S}_{\bullet}$ a $\gamma$-presentable complex, there exists a pure subcomplex $\mathscr{P}_{\bullet}\subseteq \mathscr{X}_{\bullet}$ with $\mathscr{S}_{\bullet}\subseteq \mathscr{P}_{\bullet}$ and $\mathscr{P}_{\bullet}$ also $\gamma$-presentable.
\end{lemma}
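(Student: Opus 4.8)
The plan is to imitate the module-theoretic Lemma~\ref{lemma-pure cardinality} by a transfinite purification argument, but carried out simultaneously over a fixed affine cover. First I would observe that purity of a subcomplex, in the sense of Definition~\ref{def-pur}, is a stalk-local condition and hence can be verified on any single affine open cover: if the short exact sequence $0 \to \mathscr{P}_{\bullet} \to \mathscr{X}_{\bullet} \to \mathscr{X}_{\bullet}/\mathscr{P}_{\bullet} \to 0$ restricts to a pure exact sequence of $\mathscr{R}(U_i)$-complexes for every member $U_i$ of a fixed affine cover $\{U_i\}_{i\in J}$, then it does so for every affine open $V$. This is because purity of a sequence of modules (and, via the characterizations recorded in Lemma~\ref{lemma-props of pure subcomplexes}, of complexes) is both preserved and reflected by localization, so it may be checked at the stalks $\mathscr{O}_{X,x}$, and each such stalk is computed from some $U_i$ containing $x$. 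This reduces the problem to producing a $\gamma$-presentable subcomplex of $\mathscr{X}_{\bullet}$ containing $\mathscr{S}_{\bullet}$ that is pure on each $U_i$.

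Next I would fix the cardinal. I choose $\gamma > \lambda$ regular and uncountable, large enough that $\gamma > |J|$, that $\gamma > |\mathscr{R}(U_i)|$ for all $i$, that every $\gamma$-presentable subcomplex has at most $\gamma$ sections over each $U_i$, and that the smallest quasi-coherent subcomplex of $\mathscr{X}_{\bullet}$ generated by a $\gamma$-presentable subcomplex together with at most $\gamma$ additional sections is again $\gamma$-presentable; this last closure property is available because $\textnormal{Ch}(\Qco(X))$ is locally $\lambda$-presentable. The single purification step is then: given a $\gamma$-presentable subcomplex $\mathscr{T}_{\bullet} \subseteq \mathscr{X}_{\bullet}$, apply Lemma~\ref{lemma-pure cardinality} over each ring $\mathscr{R}(U_i)$ to the inclusion $\mathscr{T}_{\bullet}(U_i) \subseteq \mathscr{X}_{\bullet}(U_i)$ (whose source has cardinality at most $\gamma$), obtaining a pure subcomplex $P^{i} \subseteq \mathscr{X}_{\bullet}(U_i)$ with $\mathscr{T}_{\bullet}(U_i) \subseteq P^{i}$ and $|P^{i}| \le \gamma$. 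I then let $\mathscr{T}'_{\bullet}$ be the smallest quasi-coherent subcomplex of $\mathscr{X}_{\bullet}$ containing $\mathscr{T}_{\bullet}$ and, for every $i$, the sections of $P^{i}$ over $U_i$. By the choice of $\gamma$ this $\mathscr{T}'_{\bullet}$ is again $\gamma$-presentable, and by construction $P^{i} \subseteq \mathscr{T}'_{\bullet}(U_i)$ for each $i$.

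Finally I would iterate: set $\mathscr{P}^{(0)}_{\bullet} = \mathscr{S}_{\bullet}$, $\mathscr{P}^{(n+1)}_{\bullet} = (\mathscr{P}^{(n)}_{\bullet})'$, and $\mathscr{P}_{\bullet} = \bigcup_{n<\omega}\mathscr{P}^{(n)}_{\bullet}$. Since $\gamma$ is regular and uncountable, this $\omega$-indexed union of $\gamma$-presentable subcomplexes is $\gamma$-presentable, and it contains $\mathscr{S}_{\bullet}$. To see that $\mathscr{P}_{\bullet}(U_i)\subseteq\mathscr{X}_{\bullet}(U_i)$ is pure I run the usual finiteness argument: any finite system of equations over $\mathscr{R}(U_i)$ solvable in $\mathscr{X}_{\bullet}(U_i)$ with parameters in $\mathscr{P}_{\bullet}(U_i)$ has all its finitely many parameters inside some $\mathscr{P}^{(n)}_{\bullet}(U_i)$, and the purity of the enlargement $P^{i}$ produced at stage $n$ then furnishes a solution inside $P^{i}\subseteq \mathscr{P}^{(n+1)}_{\bullet}(U_i)\subseteq \mathscr{P}_{\bullet}(U_i)$. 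Thus $\mathscr{P}_{\bullet}$ is pure on every $U_i$, hence pure in the sense of Definition~\ref{def-pur} by the reduction of the first step.

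The main obstacle is the tension between locality and presentability in the purification step: the pure enlargements $P^{i}$ are produced independently on the various $U_i$, but they must be assembled into one quasi-coherent subcomplex, and the act of generating that subsheaf enlarges the sections over each $U_i$ beyond $P^{i}$, potentially destroying the purity just arranged. This is precisely what forces the passage to the colimit, since purity is only recovered in the limit object $\mathscr{P}_{\bullet}$; the delicate point to verify is that the closure operation ``generate the quasi-coherent subcomplex'' does not break the $\gamma$-presentability bound, which is where local $\lambda$-presentability of $\textnormal{Ch}(\Qco(X))$ is essential. A secondary technical point, needed for the reduction in the first step, is the stalk-locality of purity for complexes, which rests on the module-level characterizations recorded in Lemma~\ref{lemma-props of pure subcomplexes} together with the fact that module purity localizes.
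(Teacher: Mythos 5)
Your proof is essentially workable, but it takes a genuinely different route from the paper's. The paper's argument is a short appeal to the machinery of locally presentable categories: \cite[Theorem~2.33]{adamek-rosicky} directly produces, for suitable regular $\gamma>\lambda$, a $\gamma$-presentable $\lambda$-pure subobject $\mathscr{P}_{\bullet}\supseteq\mathscr{S}_{\bullet}$, and then \cite[Proposition~2.30]{adamek-rosicky} (a $\lambda$-pure monomorphism is a $\lambda$-directed colimit of split monomorphisms) converts categorical $\lambda$-purity into purity in the sense of Definition~\ref{def-pur}, since $\Gamma(U,-)$ preserves direct limits. You instead run the classical Enochs-style iteration: purify section-wise over each member of a fixed affine cover via Lemma~\ref{lemma-pure cardinality}, close up to a quasi-coherent subcomplex, and pass to the union after $\omega$ steps. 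Your route is more hands-on and avoids $\lambda$-purity altogether; the paper's is shorter because the abstract theorem absorbs all of the bookkeeping at once.

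Two steps in your write-up are asserted rather than proved, and they carry the real technical weight. First, the reduction to a fixed cover requires that purity of complexes of modules be both preserved and detected by localization. This is true, but it does not follow from Lemma~\ref{lemma-props of pure subcomplexes}, which only records \emph{necessary} conditions for purity; the correct tool is the tensor characterization from Section~2.5 (purity is equivalent to $M\overline{\otimes}-$ preserving exactness for all $M$), which visibly commutes with localization. Alternatively, you could index the iteration over the set of \emph{all} affine opens and dispense with the reduction. Second, and more seriously, the closure step is not well posed as stated: the section-wise intersection of infinitely many quasi-coherent subsheaves need not be quasi-coherent, so ``the smallest quasi-coherent subcomplex whose $U_i$-sections contain $P^{i}$'' may not exist. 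What you actually need is that any family of at most $\gamma$ local sections of $\mathscr{X}_{\bullet}$ is contained in \emph{some} $\gamma$-presentable quasi-coherent subcomplex; this is the nontrivial zig-zag closure fact underlying deconstructibility in $\Qco(X)$, and it is essentially equivalent to the statement of \cite[Theorem~2.33]{adamek-rosicky} that the paper invokes. Once those two points are supplied, your $\omega$-iteration and the finite-parameter argument for purity of the union are correct.
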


\begin{proof}
By~\cite[Theorem 2.33]{adamek-rosicky} there exist (arbitrarily large) regular cardinals $\gamma > \lambda$ with the stated property but instead promising that $\mathscr{P}_{\bullet}\subseteq \mathscr{X}_{\bullet}$ be a $\lambda$-pure subcomplex rather than pure in the sense of our Definition~\ref{def-pur}. But we will argue that it is pure in this sense too. Indeed, by~\cite[Proposition 2.30]{adamek-rosicky} the $\lambda$-pure morphism $\mathscr{P}_{\bullet}\subseteq \mathscr{X}_{\bullet}$ is a $\lambda$-directed colimit of split monomorphisms. In particular, for every open affine $U\subseteq X$, the functor $\Gamma(U,-)$ commutes with direct limits, and so we obtain a $\lambda$-directed colimit of split monomorphisms $\mathscr{P}_{\bullet}(U)\subseteq \mathscr{X}_{\bullet}(U)$ in Ch($\mathscr{R}(U)$). Since every $\lambda$-directed system is an $\omega$-directed system, it follows that $\mathscr{P}_{\bullet}(U)\subseteq \mathscr{X}_{\bullet}(U)$ is an $\omega$-directed colimit (i.e. a direct limit) of split monomorphisms in Ch($\mathscr{R}(U)$), whence a pure subcomplex in Ch($\mathscr{R}(U)$) in the sense of Definition~\ref{def-purity}.
\end{proof}

By using the previous lemma we get the corresponding version of Proposition~\ref{prop-transfinite} for chain complexes of sheaves.

\begin{proposition}\label{prop-transfinite-extens-sheaf}
Let $\mathcal A$ be a class of chain complexes of sheaves. Suppose $\mathcal A$ is closed under taking pure subcomplexes and pure quotients. Then there is a regular cardinal $\gamma$ such that every chain complex in $\mathcal A$ is a transfinite extension of complexes in $\mathcal A$ which are $\gamma$-presentable.
\end{proposition}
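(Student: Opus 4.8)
The plan is to mimic the proof of Proposition~\ref{prop-transfinite} almost verbatim, replacing the module-theoretic cardinality bound of Lemma~\ref{lemma-pure cardinality} with the presentability bound supplied by Lemma~\ref{pure.subcomp.sheaf}. First I would fix the regular cardinal $\gamma > \lambda$ guaranteed by Lemma~\ref{pure.subcomp.sheaf}, and take an arbitrary $\mathscr{A}_{\bullet} \in \mathcal{A}$. If $\mathscr{A}_{\bullet}$ is already $\gamma$-presentable there is nothing to do, so I would assume it is not. The goal is to construct, by transfinite induction, a strictly increasing continuous chain $0 \neq \mathscr{A}^{(0)}_{\bullet} \subsetneq \mathscr{A}^{(1)}_{\bullet} \subsetneq \cdots$ of subcomplexes whose union is $\mathscr{A}_{\bullet}$, with each $\mathscr{A}^{(\alpha)}_{\bullet}$ a pure subcomplex of $\mathscr{A}_{\bullet}$ and each successive quotient $\mathscr{A}^{(\alpha+1)}_{\bullet}/\mathscr{A}^{(\alpha)}_{\bullet}$ lying in $\mathcal{A}$ and being $\gamma$-presentable.

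The engine of the induction is the same two-step argument as before. To start, I would pick any nonzero $\gamma$-presentable subobject $\mathscr{S}_{\bullet} \subseteq \mathscr{A}_{\bullet}$ and apply Lemma~\ref{pure.subcomp.sheaf} to enlarge it to a pure, $\gamma$-presentable subcomplex $\mathscr{A}^{(0)}_{\bullet}$. Then $\mathscr{A}^{(0)}_{\bullet}$ and $\mathscr{A}_{\bullet}/\mathscr{A}^{(0)}_{\bullet}$ both lie in $\mathcal{A}$ by the hypothesis that $\mathcal{A}$ is closed under pure subcomplexes and pure quotients. At a successor stage, given a pure subcomplex $\mathscr{A}^{(\alpha)}_{\bullet} \subsetneq \mathscr{A}_{\bullet}$, I would apply Lemma~\ref{pure.subcomp.sheaf} inside the quotient $\mathscr{A}_{\bullet}/\mathscr{A}^{(\alpha)}_{\bullet}$ to find a nonzero pure $\gamma$-presentable subcomplex, pull it back to define $\mathscr{A}^{(\alpha+1)}_{\bullet}$, and then reproduce the key transitivity observation from Proposition~\ref{prop-transfinite}: that $\mathscr{A}^{(\alpha+1)}_{\bullet}$ is pure in $\mathscr{A}_{\bullet}$ because purity is detected on each open affine $U$, and there the composite $\Gamma(U,-)$ of two surjections-on-$\Hom_{\ch}(F,-)$ (from $\mathscr{A}^{(\alpha)}_{\bullet}$ pure in $\mathscr{A}_{\bullet}$ and $\mathscr{A}^{(\alpha+1)}_{\bullet}/\mathscr{A}^{(\alpha)}_{\bullet}$ pure in the quotient) is again surjective. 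At limit stages I would set $\mathscr{A}^{(\gamma)}_{\bullet} = \bigcup_{\alpha < \gamma} \mathscr{A}^{(\alpha)}_{\bullet}$, using that pure subcomplexes are closed under direct unions; since purity is checked sectionwise and $\Gamma(U,-)$ commutes with direct limits, this reduces to the corresponding module-level fact already cited in the proof of Proposition~\ref{prop-transfinite}.

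The main obstacle I anticipate is not the transfinite bookkeeping, which transfers cleanly, but verifying the two purity-stability facts in the sheaf setting: that a pure subcomplex of a pure quotient pulls back to a pure subcomplex, and that pure subcomplexes are closed under direct unions. Both follow from the sectionwise Definition~\ref{def-pur} together with the exactness and colimit-preservation of $\Gamma(U,-)$ on each open affine $U$, so the work is to note that these reduce, affine by affine, to statements in $\Ch(\mathscr{R}(U))$ that are exactly the module-theoretic facts used in Proposition~\ref{prop-transfinite}. The only genuinely new input is that $\gamma$-presentability (in the locally $\gamma$-presentable category $\Ch(\Qco(X))$) plays the role of the cardinality bound, and that the union stays inside the $\gamma$-presentable-successive-quotient regime; this is precisely what Lemma~\ref{pure.subcomp.sheaf} was set up to deliver, so no separate size estimate is needed. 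Assembling these pieces, the chain terminates with $\mathscr{A}_{\bullet}$ exhibited as a transfinite extension of $\gamma$-presentable complexes in $\mathcal{A}$, as required.
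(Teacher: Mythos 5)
Your proposal is correct and follows essentially the same route as the paper, whose proof consists precisely of rerunning the transfinite induction of Proposition~\ref{prop-transfinite} with Lemma~\ref{lemma-pure cardinality} replaced by Lemma~\ref{pure.subcomp.sheaf}, including the same transitivity-of-purity and direct-union observations checked sectionwise on open affines. The only cosmetic issue is reusing $\gamma$ both for the fixed regular cardinal and as a limit-ordinal index, which you should rename to avoid confusion.
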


\begin{proof}
All the work has been previously done in the proof of Proposition 3.4. We only have to replace Lemma 3.3 by Lemma \ref{pure.subcomp.sheaf}  in the proof of Proposition 3.4 to get the desired transfinite extension of a complex $\mathscr{A}_{\bullet}\in \mathcal A$. Let us see why. Let $\gamma$ be some regular cardinal as in Lemma~\ref{pure.subcomp.sheaf} and let $\mathscr{A}_{\bullet} \in \mathcal{A}$. Again if $\mathscr{A}_{\bullet}$ is $\gamma$-presentable we are done. Otherwise, we start by applying Lemma \ref{pure.subcomp.sheaf} to find a nonzero pure subcomplex $\mathscr{A}^0_{\bullet}$ with $\mathscr{A}^0_{\bullet}$ $\gamma$-presentable. Then $\mathscr{A}^0_{\bullet}$ and $\mathscr{A}_{\bullet}/\mathscr{A}^0_{\bullet}$ are each complexes in $\mathcal A$ by our assumption on $\mathcal A$. So we again apply Lemma \ref{pure.subcomp.sheaf} to $\mathscr{A}_{\bullet}/\mathscr{A}^0_{\bullet}$ to get a nonzero $\gamma$-presentable pure subcomplex $\mathscr{A}^1_{\bullet}/\mathscr{A}^0_{\bullet}\subset \mathscr{A}_{\bullet}/\mathscr{A}^0_{\bullet} $. The proof of Proposition 3.4 shows that indeed $\mathscr{A}^1_{\bullet}(U)\subset \mathscr{A}_{\bullet}(U)$ is pure in Ch($\mathscr{R}(U)$). Hence $\mathscr{A}^1_{\bullet}\subset \mathscr{A}_{\bullet}$ is pure (and so $\mathscr{A}_{\bullet}/\mathscr{A}^1_{\bullet}$ is a pure quotient). By repeating this procedure we may construct a strictly increasing chain, $0\neq \mathscr{A}^0_{\bullet}\subsetneq  \mathscr{A}^1_{\bullet}\subsetneq \ldots \subsetneq \mathscr{A}^n_{\bullet}\subsetneq  \dots$, where each $\mathscr{A}_{\bullet}^n$ is a pure subcomplex of $\mathscr{A}_{\bullet}$ and each quotient $\mathscr{A}_{\bullet}^{n+1}/\mathscr{A}_{\bullet}^n$ is $\gamma$-presentable and belongs to $\mathcal A$. We set $\mathscr{A}_{\bullet}^{\omega}=\cup_{n<\omega} \mathscr{A}_{\bullet}^n$. Now it is easy to see that our notion of purity in Ch($\Qco(X)$) is closed under direct unions (again because the notion of purity in Ch($\mathscr{R}(U)$) is such). So again $\mathscr{A}_{\bullet}^{\omega}$ and $\mathscr{A}_{\bullet}/\mathscr{A}_{\bullet}^{\omega}$ are also each in $\mathcal A$. So we can follow the process by transfinite induction, setting $\mathscr{A}^{\mu}_{\bullet}=\cup_{\alpha<\mu} \mathscr{A}^{\alpha}_{\bullet}$ whenever $\mu$ is a limit ordinal. The process must stop at some step, so we get the desired result.
\end{proof}

\subsection{Acyclicity for complexes of flat sheaves}
We now extend the notion of $A$-acyclic complexes from Section~\ref{sec-acyclicity-modules}, from modules to sheaves.  The following is the analog to Definition~\ref{def-A-acyclicity}.

\begin{definition}\label{def-A-acyclicity-sheaves}
Let $\mathscr{F}_{\bullet}$ be a chain complex of flat sheaves.
Suppose that for each open affine $U \subseteq X$, we are given an $\mathscr{R}(U)$-module $A_U$. We will then say that $\mathscr{F}_{\bullet}$ is  \textbf{$\boldsymbol{A_X}$-acyclic} if $\mathscr{F}_{\bullet}(U)$ is an $A_U$-acyclic complex for every open affine $U$ of $X$; that is, $A_U\otimes_{\mathscr{R}(U)}\mathscr{F}_{\bullet}(U)$ is exact for every open affine $U\subseteq X$.
\end{definition}

We now show that Theorem~\ref{them-deconstructibility of tensor acyclic complexes} extends to complexes of sheaves.

\begin{theorem}\label{them-A-cot-pairs-sheaves}
Let $X$ be a scheme with structure sheaf $\mathscr{R}$.
Suppose that for each open affine $U \subseteq X$, we are given an $\mathscr{R}(U)$-module $A_U$.
Let $\barAF$ denote the class of all $A_X$-acyclic complexes of flat sheaves.
Then $\barAF$ is a covering class. Moreover, if $\Qco(X)$ has a flat generator (for instance if $X$ is quasi-compact and semi-separated), then every $\barAF$-cover is an epimorphism and $(\barAF,\barAF^{\perp})$ is a complete hereditary cotorsion pair cogenerated by a set.
\end{theorem}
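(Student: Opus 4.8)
The plan is to mirror the proof of Theorem~\ref{them-deconstructibility of tensor acyclic complexes}, transporting every module-level fact to sheaves through the sections functors $\Gamma(U,-)$ over the open affines $U$, which are exact on $\Qco(X)$ and commute with direct limits. First I would establish the closure properties of $\barAF$. Since a quasi-coherent sheaf is flat exactly when every section $\mathscr{F}_n(U)$ is a flat $\mathscr{R}(U)$-module, and since purity in the sense of Definition~\ref{def-pur} is tested affine-locally, a pure exact sequence $0\to\mathscr{P}_{\bullet}\to\mathscr{F}_{\bullet}\to\mathscr{Y}_{\bullet}\to 0$ with $\mathscr{F}_{\bullet}\in\barAF$ evaluates over each open affine $U$ to a pure exact sequence of complexes of flat $\mathscr{R}(U)$-modules whose middle term lies in $\barAFU$. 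The argument from the proof of Theorem~\ref{them-deconstructibility of tensor acyclic complexes} (through Lemmas~\ref{lemma-props of pure subcomplexes} and~\ref{lemma-A-acyclic}) then forces $\mathscr{P}_{\bullet}(U),\mathscr{Y}_{\bullet}(U)\in\barAFU$, so that $\barAF$ is closed under pure subcomplexes and pure quotients. The same affine-local reduction shows $\barAF$ is closed under extensions, transfinite extensions, direct summands, direct limits, and --- since $\barAFU$ is the left half of a hereditary cotorsion pair --- kernels of epimorphisms between its members.

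With closure under pure subobjects and pure quotients in hand, Proposition~\ref{prop-transfinite-extens-sheaf} supplies a regular cardinal $\gamma$ and a set $\class{S}$ of representatives of the $\gamma$-presentable complexes lying in $\barAF$ such that every object of $\barAF$ is a transfinite extension of members of $\class{S}$; closure under transfinite extensions gives the reverse inclusion, so $\barAF$ is deconstructible. Moreover Lemma~\ref{pure.subcomp.sheaf} shows that every $\mathscr{F}_{\bullet}\in\barAF$ is the directed union of its $\gamma$-presentable pure subcomplexes, each of which again lies in $\barAF$ by the closure just proved.

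For the unconditional covering statement I would invoke El Bashir's theorem on covers in Grothendieck categories: a class closed under direct limits which is generated, under directed unions of subobjects, by a set of objects of the class is a covering class. Closure of $\barAF$ under direct limits together with the directed-union description from the previous paragraph verifies exactly these hypotheses, so $\barAF$ is a covering class. Note that at this stage the covers need not be epimorphisms, which is why the surjectivity assertion is deferred to the flat-generator hypothesis.

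Finally, assume $\Qco(X)$ has a flat generator $\mathscr{G}$. Then each disk $D^n(\mathscr{G})$ is a contractible --- hence exact and $A_X$-acyclic --- complex of flat sheaves, so $\{D^n(\mathscr{G})\}_{n\in\Z}\subseteq\barAF$, and this set generates $\textnormal{Ch}(\Qco(X))$. Enlarging $\class{S}$ to $\class{S}'=\class{S}\cup\{D^n(\mathscr{G})\}_{n\in\Z}$ keeps $\barAF$ the class of transfinite extensions of $\class{S}'$ while placing a generating set of the ambient category inside the left class, so the Eklof--Trlifaj machinery in Grothendieck categories yields that $(\leftperp{(\rightperp{\class{S}'})},\rightperp{\class{S}'})=(\barAF,\rightperp{\barAF})$ is a complete cotorsion pair cogenerated by the set $\class{S}'$; it is hereditary because $\barAF$ is closed under kernels of epimorphisms, as checked affine-locally in the first step. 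Completeness provides surjective special $\barAF$-precovers, through which the covers from the previous paragraph factor, forcing every $\barAF$-cover to be an epimorphism. The main obstacle is conceptual rather than computational: one must isolate precisely why the flat generator is needed --- it is the device that deposits a generating set of $\textnormal{Ch}(\Qco(X))$ inside the \emph{flat} class $\barAF$, which is exactly what upgrades deconstructibility to a complete cotorsion pair and makes the covers surjective --- while recognizing that the bare covering statement already follows from direct-limit closure via El Bashir's theorem with no such hypothesis.
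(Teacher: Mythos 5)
Your proposal is correct and follows essentially the same route as the paper: affine-local reduction to Theorem~\ref{them-deconstructibility of tensor acyclic complexes} to get closure under pure subcomplexes and pure quotients, Proposition~\ref{prop-transfinite-extens-sheaf} for deconstructibility, closure under direct limits (via El Bashir) for the covering claim, and the disks $D^n(\mathscr{G})$ on a flat generator to upgrade to a complete cotorsion pair cogenerated by a set. The only cosmetic difference is that you derive hereditariness from affine-local closure under kernels of epimorphisms, where the paper cites closure under pure subcomplexes; both are valid and lead to the same conclusion.
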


\begin{proof}
Let us first see that the class $\barAF$ is closed under taking pure subcomplexes and pure quotients. Let $0\to \mathscr{P}_{\bullet}\to \mathscr{F}_{\bullet}\to \mathscr{G}_{\bullet}\to 0$ be a pure exact sequence in Ch$(\Qco(X))$ with $\mathscr{F}_{\bullet} \in {}_{A_{\! X}\! }{\widetilde{\mathcal F}}$. Then, for every open affine $U$ we have the pure exact sequence $0\to \mathscr{P}_{\bullet}(U)\to \mathscr{F}_{\bullet}(U)\to \mathscr{G}_{\bullet}(U)\to 0$ in Ch($\mathscr{R}(U)$). The proof of Theorem~\ref{them-deconstructibility of tensor acyclic complexes} tells us then that $\mathscr{P}_{\bullet}(U)$ and $\mathscr{G}_{\bullet}(U)$ are both $A_U$-acyclic. Hence, $\mathscr{P}_{\bullet}$ and $\mathscr{G}_{\bullet}$ are in $\barAF$, proving that $\barAF$ is closed under pure subobjects and pure quotients. We infer from Proposition~\ref{prop-transfinite-extens-sheaf} that there is a set $\mathcal S$ such that every complex in $\barAF$ is a transfinite extension of $\mathcal S$. The class $\barAF$ is clearly closed under direct limits and extensions (again because the question is easily checked at the level of sections on each open affine $U$). Therefore $\barAF$ is a covering class. Moreover, if $\Qco(X)$ possesses a flat generator $\mathscr{G}$, then the disks $D^n(\mathscr{G})$ are clearly generators for Ch$(\Qco(X))$ that lie in $\barAF$. Hence it follows that $(\barAF,\barAF^{\perp})$ is a complete (indeed perfect) cotorsion pair in Ch$(\Qco(X))$. The fact that $\barAF$ is closed under pure subcomplexes gives us immediately that the complete cotorsion pair $(\barAF, \barAF^{\perp})$ is an hereditary one.
\end{proof}

\subsection{AC-acyclic complexes of flat sheaves}
We now also extend, from modules to sheaves, the notion of (exact) AC-acyclic complexes of flats. So the following is based on Definition~\ref{def-acyclicity}.

\begin{definition}\label{def-AC-sheaves}
Let $\mathscr{F}_{\bullet}$ be a chain complex of flat sheaves.
We say that $\mathscr{F}_{\bullet}$ is  \textbf{AC-acyclic} if $\mathscr{F}_{\bullet}(U)$ is an AC-acyclic complex for every open affine $U$ of $X$; that is, $A_U \otimes_{\mathscr{R}(U)}\mathscr{F}_{\bullet}(U)$ is exact for every absolutely clean $\mathscr{R}(U)$-module $A_U$.  If $\mathscr{F}_{\bullet}$ is  itself exact we say it is \textbf{exact AC-acyclic}. Note that this is equivalent to saying that $\mathscr{F}_{\bullet}(U)$ is an exact AC-acyclic complex for every open affine $U$ of $X$.
\end{definition}

Using Setup~\ref{setup-choice of A} we can, for each open affine $U \subseteq X$, find an $\mathscr{R}(U)$-module $A_U$ such that $\mathscr{F}_{\bullet}$ is $A_X$-acyclic if an only if it is (exact) AC-acyclic. So we get the following result.

\begin{corollary}\label{cor-AC-cot-pairs-sheaves}
Let $X$ be a scheme with structure sheaf $\mathscr{R}$.
Let $\barAF$ denote the class of all AC-acyclic complexes of flat sheaves.
Then $\barAF$ is a covering class. Moreover, if $X$ is quasi-compact and semi-separated, then every $\barAF$-cover is an epimorphism and $(\barAF,\barAF^{\perp})$ is a complete hereditary cotorsion pair cogenerated by a set. The same results hold if we replace $\barAF$ with the class of all exact AC-acyclic complexes of flat sheaves.
\end{corollary}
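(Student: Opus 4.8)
The plan is to deduce the corollary as a direct specialization of Theorem~\ref{them-A-cot-pairs-sheaves}, by choosing the local data $\{A_U\}$ so that $A_X$-acyclicity becomes AC-acyclicity. First I would, for each open affine $U \subseteq X$, apply Setup~\ref{setup-choice of A} to the ring $\mathscr{R}(U)$: this furnishes a set $\class{S}_U$ of $\mathscr{R}(U)$-modules such that every absolutely clean $\mathscr{R}(U)$-module is a transfinite extension of modules in $\class{S}_U$, and I set $A_U$ to be the direct sum of all modules in $\class{S}_U$. The key affine fact, recorded in Setup~\ref{setup-choice of A}, is that a complex of flat $\mathscr{R}(U)$-modules is $A_U$-acyclic if and only if it is AC-acyclic. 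One direction is immediate since $- \otimes_{\mathscr{R}(U)} \mathscr{F}_{\bullet}(U)$ commutes with the direct sum defining $A_U$ and a coproduct of complexes is exact precisely when each summand is; the other uses that $\mathscr{F}_{\bullet}(U)$ is degreewise flat together with the transfinite-extension description of the absolutely clean modules, and ultimately rests on Theorem~\ref{them-deconstructibility of tensor acyclic complexes}.

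The decisive observation is that both Definition~\ref{def-A-acyclicity-sheaves} and Definition~\ref{def-AC-sheaves} are purely sectionwise: a complex of flat sheaves is $A_X$-acyclic (resp. AC-acyclic) exactly when $\mathscr{F}_{\bullet}(U)$ is $A_U$-acyclic (resp. AC-acyclic) for every open affine $U$. Hence the affine equivalence above, applied at each $U$ independently, shows that with this choice of $\{A_U\}$ the class $\barAF$ of AC-acyclic complexes of flat sheaves coincides with the class of $A_X$-acyclic complexes of flat sheaves; this is precisely why the notation $\barAF$ is consistent between the theorem and the corollary. Note there is no compatibility requirement between the $A_U$ for different opens, so this patching is free. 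For the exact AC-acyclic version I would run the identical argument but arrange, as permitted by Setup~\ref{setup-choice of A}, that $\mathscr{R}(U)$ is a direct summand of each $A_U$; then $A_U$-acyclicity at $U$ is equivalent to exact AC-acyclicity at $U$.

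With this identification in hand, I would simply invoke Theorem~\ref{them-A-cot-pairs-sheaves} verbatim. It gives that $\barAF$ is a covering class for an arbitrary scheme $X$, yielding the first assertion. When $X$ is quasi-compact and semi-separated, $\Qco(X)$ admits a flat generator, so the theorem further gives that every $\barAF$-cover is an epimorphism and that $(\barAF, \barAF^{\perp})$ is a complete hereditary cotorsion pair cogenerated by a set, and the same appeal disposes of the exact case. I expect no serious obstacle here: the only genuine content is the affine equivalence between $A_U$-acyclicity and AC-acyclicity, which is already established in Setup~\ref{setup-choice of A}, while everything at the sheaf level is formal because the relevant notions are checked section-by-section and the existence of a flat generator under quasi-compactness and semi-separatedness is standard.
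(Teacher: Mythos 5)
Your proposal is correct and matches the paper's own argument: the paper likewise uses Setup~\ref{setup-choice of A} to choose, for each open affine $U$, a module $A_U$ (with $\mathscr{R}(U)$ thrown in as a summand for the exact case) so that $A_X$-acyclicity coincides sectionwise with (exact) AC-acyclicity, and then invokes Theorem~\ref{them-A-cot-pairs-sheaves} verbatim. No gaps.
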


To be a good notion for complexes of flat sheaves, one wants the notion of (exact) AC-acyclicity to be a \emph{Zariski-local} property; that is, a property that can be checked by using any open affine cover of $X$. We refer the reader to~\cite{chris-estrada-iacob} for a more detailed discussion of Zariski-local properties. Unfortunately, we do not know whether or not the notion of AC-acyclic complexes of flats is always a Zariski-local property. However, from the work in~\cite{chris-estrada-iacob}, we can now easily conclude it is indeed a Zariski-local property whenever the underlying scheme $X$ is locally coherent.

\begin{definition}\label{def-coherent-scheme}
A scheme $X$ with structure sheaf $\mathscr{R}$ is called \textbf{locally coherent} if $\mathscr{R}(U)$ is a coherent ring for every open affine $U \subseteq X$. This notion is Zariski-local by ~\cite[Proposition~4.9]{chris-estrada-iacob}, so it can be tested using any affine open covering of $X$. We say $X$ is \textbf{coherent} if it is locally coherent and quasi-compact.
\end{definition}

\begin{corollary}\label{cor-zariski-local}
Let $X$ be a locally coherent scheme  with structure sheaf $\mathscr{R}$. Then $\mathscr{F}_{\bullet}$ is an (exact) AC-acyclic complex of flat sheaves if and only if there exists an affine open covering $\mathcal U$ of $X$ such that $\mathscr{F}_{\bullet}(U)$ is an (exact) AC-acyclic complex of flat $\mathscr{R}(U)$-modules, for every $U \in \mathcal U$. In other words, the notion of (exact) AC-acyclicity is a Zariski-local property.
\end{corollary}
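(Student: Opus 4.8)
The plan is to reduce the (exact) AC-acyclicity condition, which by Definition~\ref{def-AC-sheaves} is phrased in terms of absolutely clean modules, to an equivalent condition phrased in terms of injective modules, and then to invoke the Zariski-local descent machinery of~\cite{chris-estrada-iacob}. The ``only if'' direction is immediate from the definition, so throughout I would treat the ``if'' direction: assuming $\mathscr{F}_{\bullet}(U)$ is (exact) AC-acyclic for every $U$ in some fixed affine open cover $\mathcal{U}$, one must deduce that $\mathscr{F}_{\bullet}(V)$ is (exact) AC-acyclic for an arbitrary open affine $V \subseteq X$.

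First I would exploit the hypothesis that $X$ is locally coherent. By Definition~\ref{def-coherent-scheme} each ring of sections $\mathscr{R}(U)$ is coherent, and over a coherent ring the absolutely clean modules are exactly the absolutely pure modules. Hence, for every open affine $U$, the complex $\mathscr{F}_{\bullet}(U)$ is AC-acyclic if and only if $A \otimes_{\mathscr{R}(U)} \mathscr{F}_{\bullet}(U)$ is exact for every absolutely pure $\mathscr{R}(U)$-module $A$. Now I would apply Lemma~\ref{lemma-AP-acyclic}, which holds over an arbitrary ring and requires only that $\mathscr{F}_{\bullet}(U)$ be a complex of flats: this condition is in turn equivalent to the demand that $I \otimes_{\mathscr{R}(U)} \mathscr{F}_{\bullet}(U)$ be exact for every injective $\mathscr{R}(U)$-module $I$. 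Thus AC-acyclicity of $\mathscr{F}_{\bullet}$, checked affine-locally, becomes the ``injective-tensor'' acyclicity condition that underlies the notion of $\mathbf{F}$-total acyclicity in~\cite{chris-estrada-iacob}, with the exact version obtained by additionally requiring $\mathscr{F}_{\bullet}$ to be exact.

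Next I would invoke the relevant descent result. The injective-tensor acyclicity condition for a complex of flat sheaves is precisely the one whose Zariski-localness is established in~\cite{chris-estrada-iacob}; applying it converts the hypothesis over $\mathcal{U}$ into the same condition over the arbitrary affine $V$, which settles the non-exact ``AC-acyclic'' statement. For the ``exact AC-acyclic'' statement I would additionally use that exactness of a complex of quasi-coherent sheaves is itself a Zariski-local property: exactness may be tested on stalks, every stalk lies over some member of $\mathcal{U}$, and $\Gamma(V,-)$ is exact for affine $V$, so exactness of each $\mathscr{F}_{\bullet}(U)$ with $U \in \mathcal{U}$ already forces exactness of $\mathscr{F}_{\bullet}$ and hence of $\mathscr{F}_{\bullet}(V)$. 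Combining the two Zariski-local properties then yields the exact case.

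The main obstacle is the descent step itself, and here the reformulation via Lemma~\ref{lemma-AP-acyclic} is decisive. The corollary does not assume $X$ to be semi-separated, so one cannot simply intersect $V$ with the members of $\mathcal{U}$ to obtain affine overlaps and compare sections by localization; moreover, absolutely pure modules are not well behaved under the change-of-rings maps between incomparable affines, which is exactly why a naive direct comparison of $\mathscr{F}_{\bullet}(V)$ with the $\mathscr{F}_{\bullet}(U)$ fails. The point of passing to the injective formulation is that it is \emph{this} condition, rather than the absolutely pure one, that~\cite{chris-estrada-iacob} shows to descend along arbitrary affine covers of a locally coherent scheme. Once the local coherence hypothesis has been used to make the two formulations coincide, the corollary follows formally, which is why the intended argument is short.
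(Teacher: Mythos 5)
Your proposal is correct and follows essentially the same route as the paper: use local coherence to identify absolutely clean with absolutely pure modules, apply Lemma~\ref{lemma-AP-acyclic} to translate (exact) AC-acyclicity into the injective-tensor acyclicity condition, and then invoke the Zariski-locality of that condition established in~\cite[Proposition~3.7]{chris-estrada-iacob}. The only cosmetic difference is in the exact case, where you separately verify that exactness descends via stalks, while the paper simply observes that the cited proof works identically whether or not $\mathscr{F}_{\bullet}$ is assumed exact.
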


\begin{proof}
Since for coherent rings, absolutely clean modules coincide with absolutely pure modules, it follows from Lemma~\ref{lemma-AP-acyclic} that $\mathscr{F}_{\bullet}(U)$ is AC-acyclic if and only if $I \otimes_{\mathscr{R}(U)} \mathscr{F}_{\bullet}(U)$ is exact for each open affine $U \subseteq X$ and injective module $I \in \mathscr{R}(U)$-Mod. But this is directly shown to be a Zariski-local property in~\cite[Proposition~3.7]{chris-estrada-iacob}. We note that the authors' proof of this works in the same way, regardless of whether or not $\mathscr{F}$ is exact from the start.  That is, both AC-acyclicity and exact AC-acyclicity are Zariski-local.
\end{proof}

\subsection{AC-acyclicity over a semi-separated and locally coherent scheme}
Our main goal now is to prove the following characterization of (exact) AC-acyclic complexes of flats over a semi-separated and locally coherent  scheme. A  (quasi-coherent) sheaf $\mathscr{A}$ is called \textbf{locally absolutely pure}  if $\mathscr{A}(U)$ is an absolutely pure $\mathscr{R}(U)$-module for each open affine $U \subseteq X$. They were introduced in~\cite[Section~5]{EEO-pure} and shown there to have very satisfying properties over locally coherent schemes.

\begin{theorem}\label{them-AC-characterization}
Let $X$ be semi-separated and locally coherent. Then an (exact) chain complex $\mathscr{F}_{\bullet}$ of flat sheaves is (exact) AC-acyclic if and only if $\mathscr{A} \otimes \mathscr{F}_{\bullet}$ is exact for all locally absolutely pure sheaves $\mathscr{A}$.
\end{theorem}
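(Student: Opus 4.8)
The plan is to reduce everything to the affine, module-level statements already in hand, using two standard facts about quasi-coherent sheaves: (a) a complex $\mathscr{G}_{\bullet}$ of quasi-coherent sheaves is exact if and only if its complex of sections $\mathscr{G}_{\bullet}(U)$ is exact over every open affine $U$, since $\Gamma(U,-)$ is an exact equivalence $\Qco(U)\simeq \mathscr{R}(U)\text{-Mod}$; and (b) the tensor product of quasi-coherent sheaves localizes, giving for every open affine $U$ a natural identification $(\mathscr{A}\otimes\mathscr{F}_{\bullet})(U)\cong \mathscr{A}(U)\otimes_{\mathscr{R}(U)}\mathscr{F}_{\bullet}(U)$. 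Because a flat complex that is already exact is AC-acyclic exactly when it is exact AC-acyclic, it suffices to prove the unparenthesized equivalence; the exact version then follows verbatim under the extra hypothesis that $\mathscr{F}_{\bullet}$ is exact.

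For the forward implication I would argue directly from the definitions. Suppose $\mathscr{F}_{\bullet}$ is AC-acyclic and let $\mathscr{A}$ be locally absolutely pure. Fix an open affine $U$. Then $\mathscr{A}(U)$ is an absolutely pure $\mathscr{R}(U)$-module, and since $\mathscr{R}(U)$ is coherent this coincides with being absolutely clean. Hence $\mathscr{A}(U)\otimes_{\mathscr{R}(U)}\mathscr{F}_{\bullet}(U)$ is exact by the definition of AC-acyclicity. By (b) this complex is $(\mathscr{A}\otimes\mathscr{F}_{\bullet})(U)$, and as $U$ was arbitrary, (a) shows $\mathscr{A}\otimes\mathscr{F}_{\bullet}$ is exact.

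The converse is the substantive direction. Assuming $\mathscr{A}\otimes\mathscr{F}_{\bullet}$ is exact for every locally absolutely pure $\mathscr{A}$, I fix an open affine $U$ together with an absolutely pure (equivalently absolutely clean) $\mathscr{R}(U)$-module $A_U$, and write $j_U\colon U\hookrightarrow X$ for the inclusion. Since $X$ is semi-separated, $U\cap V$ is affine for every open affine $V$, so $j_U$ is an affine morphism and $\mathscr{A}:=(j_U)_*\widetilde{A_U}$ is a genuine quasi-coherent sheaf on $X$, with $\mathscr{A}(U)=\widetilde{A_U}(U)=A_U$. Granting that $\mathscr{A}$ is locally absolutely pure, the hypothesis forces $\mathscr{A}\otimes\mathscr{F}_{\bullet}$ to be exact, so by (a) and (b) its $U$-sections $A_U\otimes_{\mathscr{R}(U)}\mathscr{F}_{\bullet}(U)$ form an exact complex. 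Since $U$ and $A_U$ were arbitrary, $\mathscr{F}_{\bullet}$ is AC-acyclic.

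The main obstacle is exactly the claim that $(j_U)_*\widetilde{A_U}$ is locally absolutely pure. For an open affine $V$ one computes $((j_U)_*\widetilde{A_U})(V)=\widetilde{A_U}(U\cap V)\cong A_U\otimes_{\mathscr{R}(U)}\mathscr{R}(U\cap V)$, and one must check that this is an absolutely pure $\mathscr{R}(V)$-module. This reduces to the stability of absolute purity (equivalently FP-injectivity over coherent rings) along the two flat ring homomorphisms $\mathscr{R}(U)\to\mathscr{R}(U\cap V)$ and $\mathscr{R}(V)\to\mathscr{R}(U\cap V)$ induced by the open immersions $U\cap V\hookrightarrow U$ and $U\cap V\hookrightarrow V$: extension of scalars along the former and restriction of scalars along the latter must each preserve absolute purity. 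This good behavior of absolutely pure modules and locally absolutely pure sheaves over locally coherent schemes is precisely what is established in~\cite[Section~5]{EEO-pure}, which I would invoke directly; moreover, since that reference shows the locally absolutely pure property is itself Zariski-local over a locally coherent scheme, it in fact suffices to verify the condition on a single open affine cover, which streamlines the verification.
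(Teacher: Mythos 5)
Your proposal is correct and follows essentially the same route as the paper: the forward direction via the affine identification $(\mathscr{A}\otimes\mathscr{F}_{\bullet})(U)\cong\mathscr{A}(U)\otimes_{\mathscr{R}(U)}\mathscr{F}_{\bullet}(U)$, and the converse by pushing forward $\widetilde{A_U}$ along the open immersion $j_U$ and showing $(j_U)_*\widetilde{A_U}$ is locally absolutely pure. The paper packages the key verification exactly as you describe, namely as its Lemmas~\ref{lemma-2}--\ref{lemma-4}: Zariski-locality of local absolute purity from~\cite[Prop.~5.7]{EEO-pure} handles extension of scalars to $\mathscr{R}(U\cap V)$, and a short adjunction argument handles restriction of scalars along the flat map $\mathscr{R}(V)\to\mathscr{R}(U\cap V)$.
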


We break the proof of the theorem down into a series of lemmas.

\begin{lemma}\label{lemma-1}
Let $R$ be a commutative coherent ring. The affine scheme $X = \textnormal{Spec}\,R$, with structure sheaf $\mathscr{O}_X$, is a locally coherent scheme. That is, $\mathscr{O}_X(U)$ is a coherent ring for each affine open $U$ of $X$.
\end{lemma}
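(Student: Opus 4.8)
The plan is to exploit the Zariski-local nature of coherence so as to avoid reasoning about arbitrary affine opens of $\textnormal{Spec}\,R$ (which need not be principal localizations in any transparent way) and instead reduce to the basic opens, where the sections are explicit localizations of $R$. By Definition~\ref{def-coherent-scheme}, the property of being locally coherent can be tested on any single affine open covering (this is \cite[Proposition~4.9]{chris-estrada-iacob}). Since the basic open sets $D(f)$, for $f \in R$, form an affine open cover of $X = \textnormal{Spec}\,R$ with $\mathscr{O}_X(D(f)) \cong R_f$, it therefore suffices to prove that each localization $R_f$ is a coherent ring; coherence of $\mathscr{O}_X(U)$ for \emph{every} affine open $U$ then follows formally, since Zariski-locality upgrades the statement on this one cover to the full defining condition of local coherence.

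The remaining point is the classical fact that localizations of coherent rings are coherent, which I would include for completeness. Recall that $R$ is coherent precisely when every finitely generated ideal is finitely presented. Let $S = \{1, f, f^2, \dots\}$, so that $R_f = S^{-1}R$, and let $I \subseteq R_f$ be a finitely generated ideal, say generated by $a_1/s_1, \dots, a_n/s_n$. Because each $s_i$ is a unit in $R_f$, we have $I = S^{-1}J$ where $J = (a_1, \dots, a_n)$ is a finitely generated ideal of $R$. Since $R$ is coherent, $J$ is finitely presented, so there is an exact sequence $R^m \to R^n \to J \to 0$ with finitely generated kernel of $R^n \to J$. The localization functor $S^{-1}(-)$ is exact and carries $R^k$ to $(R_f)^k$, hence sends finitely presented $R$-modules to finitely presented $R_f$-modules; thus $I = S^{-1}J$ is finitely presented over $R_f$. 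Therefore $R_f$ is coherent.

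There is essentially no obstacle here beyond bookkeeping: the only mild subtlety is that an arbitrary affine open of $\textnormal{Spec}\,R$ need not be a principal localization, which is precisely why the reduction to basic opens via Zariski-locality is the right first move. Everything thereafter is standard commutative algebra, and the lemma amounts to combining the two ingredients ``coherence is Zariski-local'' and ``localizations of coherent rings are coherent.''
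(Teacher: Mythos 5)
Your proof is correct, and it leans on the same key ingredient as the paper's one-line proof, namely the Zariski-locality of local coherence from \cite[Proposition~4.9]{chris-estrada-iacob}. The difference is in the choice of cover: the paper implicitly tests the property on the tautological affine open cover $\{X\}$ of $X = \textnormal{Spec}\,R$, where the condition reduces to the hypothesis that $\mathscr{O}_X(X) = R$ is coherent, and Zariski-locality then does all the work in one step. You instead test on the cover by basic opens $D(f)$, which forces you to supply the classical fact that localizations $R_f$ of a coherent ring are coherent; your argument for that fact (a finitely generated ideal $I \subseteq R_f$ is $S^{-1}J$ for a finitely generated ideal $J \subseteq R$, and localizing a finite presentation of $J$ gives one of $I$) is standard and correct. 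So your route is valid and more self-contained in its commutative algebra, but the intermediate lemma on localizations is not actually needed: once one accepts that local coherence can be tested on \emph{any} affine open cover, the single-element cover already settles the matter.
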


\begin{proof}
Local coherence is a Zariski-local property by ~\cite[Proposition~4.9]{chris-estrada-iacob}.
\end{proof}

\begin{lemma}\label{lemma-2}
With the same setup as Lemma~\ref{lemma-1}, let $A$ be an absolutely pure $R$-module. Then the associated sheaf $\widetilde{A}$ on $X$ is locally absolutely pure. That is, $\widetilde{A}(U)$ is an absolutely pure $\mathscr{O}_X(U)$-module for each affine open $U$ of $X$.
\end{lemma}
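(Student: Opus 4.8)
The plan is to reduce the statement to a base-change assertion about modules, verify it first on basic opens (where the sections are honest localizations), and then upgrade to arbitrary affine opens using the Zariski-local nature of local absolute purity.

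First I would record the geometric reduction. For an affine open $U \subseteq X = \mathrm{Spec}\,R$ set $S = \mathscr{O}_X(U)$. The inclusion $U \hookrightarrow X$ is an open immersion, hence flat, so $S$ is a flat $R$-algebra; and $S$ is coherent by Lemma~\ref{lemma-1}, since local coherence is Zariski-local. Because $\widetilde{A}$ is quasi-coherent we have $\widetilde{A}|_U = \widetilde{A \otimes_R S}$, and therefore $\widetilde{A}(U) = A \otimes_R S$. Thus the lemma amounts to showing that $A \otimes_R S$ is absolutely pure over $S$ for every such $S$; note that when $U = D(f)$ is a basic open, $S = R_f$ is simply a localization of $R$.

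I would settle the localization case first, as it already covers the basic opens. Let $S = \Sigma^{-1}R$. Any finitely presented $S$-module $M$ has the form $\Sigma^{-1}N = N \otimes_R S$ for some finitely presented $R$-module $N$ (clear denominators in a finite presentation of $M$). Since $R$ is coherent, $N$ is of type $FP_\infty$ and so admits a resolution $P_{\bullet} \to N$ by finitely generated projectives. As $S$ is flat, $P_{\bullet} \otimes_R S$ is a projective resolution of $M$, and for a finitely generated projective $P$ one has $\Hom_R(P,A) \otimes_R S \cong \Hom_S(P \otimes_R S, A \otimes_R S)$; flatness then lets $\Sigma^{-1}(-)$ pass through the cohomology, yielding the flat base-change isomorphism $\Ext^1_S(M, A \otimes_R S) \cong \Sigma^{-1}\Ext^1_R(N,A)$. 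As $A$ is absolutely pure, $\Ext^1_R(N,A) = 0$, so $\Ext^1_S(M, A \otimes_R S) = 0$ for every finitely presented $M$; that is, $A \otimes_R S$ is absolutely pure over $S$. In particular $\widetilde{A}(D(f)) = A_f$ is absolutely pure over $R_f$ for every $f \in R$.

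Finally I would pass from basic opens to arbitrary affine opens. Here one uses that local absolute purity is a Zariski-local property over locally coherent schemes, which is the content of~\cite{EEO-pure}, Section~5 (the static analogue of~\cite[Proposition~3.7]{chris-estrada-iacob} and Corollary~\ref{cor-zariski-local}); it may therefore be tested on the basis of principal opens $D(f)$. Having verified the condition there, I conclude that $\widetilde{A}$ is locally absolutely pure, and in particular that $\widetilde{A}(U)$ is absolutely pure over $\mathscr{O}_X(U)$ for every affine open $U$. The main obstacle is precisely this last step: for a general affine open $U$ the ring $\mathscr{O}_X(U)$ need not be a localization of $R$, so finitely presented $\mathscr{O}_X(U)$-modules need not descend to $R$ and the clean base-change computation of the previous paragraph is unavailable directly. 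It is exactly the Zariski-local character of local absolute purity that lets us reduce back to the localization case and circumvent this difficulty.
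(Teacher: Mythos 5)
Your proof is correct, and it relies on the same external input as the paper's --- namely that local absolute purity is a Zariski-local property over locally coherent schemes (\cite[Prop.~5.7]{EEO-pure}) --- but you take a longer route to get there. The paper's proof is a one-liner: since $\Gamma(X,\widetilde{A}) = A$ is absolutely pure over $R = \mathscr{O}_X(X)$ by hypothesis, the single-element affine cover $\{X\}$ already witnesses the local condition, and Zariski-locality then propagates it to every affine open $U$. You instead verify the condition on the basis of principal opens $D(f)$ by an explicit flat base-change computation ($A_f$ is absolutely pure over $R_f$ because every finitely presented $R_f$-module descends to a finitely presented, hence $FP_\infty$, $R$-module over the coherent ring $R$, and $\Ext$ localizes), and only then invoke Zariski-locality for the cover $\{D(f)\}$. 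That computation is sound, and it has the virtue of making the basic-open case self-contained and of isolating exactly where the difficulty lies (general affine opens are not localizations of $R$, so the direct computation cannot reach them); but it is logically superfluous, since the Zariski-locality result you cite at the end would have let you skip straight from the trivial cover $\{X\}$ to the conclusion. In short: same key lemma, different choice of test cover, with yours requiring an extra (correct but avoidable) base-change argument.
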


\begin{proof}
It is a basic fact that $\Gamma(X,\widetilde{A}) = A$. That is, $\widetilde{A}(X) = A$ is an absolutely pure $R$-module. So this is immediate from Lemma~\ref{lemma-1} and~\cite[Prop.~5.7]{EEO-pure}, which shows that local absolute purity is a Zariski-local property whenever the scheme is locally coherent.
\end{proof}

\begin{lemma}\label{lemma-3}
Let $f : R \xrightarrow{} S$ be a flat ring homomorphism. If $A$ is an absolutely pure $S$-module, then it is also an absolutely pure $R$-module upon restricting scalars.
\end{lemma}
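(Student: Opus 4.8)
The plan is to reduce everything to the classical characterization of absolute purity: an $R$-module is absolutely pure (FP-injective) exactly when $\Ext^1_R(N,-)$ vanishes on it for every finitely presented $R$-module $N$. Thus it suffices to show, for $A$ absolutely pure over $S$, that $\Ext^1_R(N,A) = 0$ for all finitely presented $R$-modules $N$, where $A$ is regarded as an $R$-module by restriction along $f$.

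The heart of the argument is a change-of-rings isomorphism provided by the flatness of $f$. I would first record that, for any $R$-module $N$ and any $S$-module $M$,
\[
\Ext^n_R(N,M) \cong \Ext^n_S(S \otimes_R N, M) \qquad (n \geq 0),
\]
where on the left $M$ is viewed over $R$ by restriction. To prove this, take an $R$-projective resolution $P_{\bullet} \to N$; since $S$ is flat over $R$, the complex $S \otimes_R P_{\bullet} \to S \otimes_R N$ is exact and hence an $S$-projective resolution of $S \otimes_R N$. The extension–restriction adjunction gives natural isomorphisms $\Hom_S(S \otimes_R P_i, M) \cong \Hom_R(P_i, M)$, identifying the two cochain complexes that compute the respective $\Ext$ groups, and taking cohomology yields the displayed isomorphism.

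Next I would observe that extension of scalars preserves finite presentation: applying the right-exact functor $S \otimes_R -$ to a finite presentation $R^a \to R^b \to N \to 0$ produces a finite presentation $S^a \to S^b \to S \otimes_R N \to 0$, so $S \otimes_R N$ is a finitely presented $S$-module whenever $N$ is a finitely presented $R$-module (no flatness is needed for this step). Combining the two facts, for any finitely presented $R$-module $N$ we obtain $\Ext^1_R(N,A) \cong \Ext^1_S(S \otimes_R N, A)$, and the right-hand side vanishes because $A$ is absolutely pure over $S$ and $S \otimes_R N$ is finitely presented over $S$. Hence $\Ext^1_R(N,A) = 0$ for every finitely presented $R$-module $N$, which is precisely the assertion that $A$ is absolutely pure over $R$.

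The one step requiring genuine care—and really the crux of the proof—is the change-of-rings isomorphism in degree one, specifically the use of flatness of $f$ to guarantee that $S \otimes_R P_{\bullet}$ remains a resolution; without flatness this comparison of $\Ext$ groups fails. In a noncommutative setting one would additionally need to track sides, taking $S$ flat as a left $R$-module so that $S \otimes_R -$ is exact on the relevant category, though in the intended application (sections of the structure sheaf along an affine inclusion) the rings are commutative and this is automatic.
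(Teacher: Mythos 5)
Your proof is correct and takes essentially the same route as the paper's: both reduce to the FP-injective characterization, derive the change-of-rings isomorphism $\Ext^n_R(N,{}_RA)\cong\Ext^n_S(S\otimes_R N,A)$ from the extension--restriction adjunction together with flatness of $f$, and observe that $S\otimes_R-$ preserves finite presentation. Your explicit verification via a projective resolution simply fills in what the paper leaves as ``it becomes clear.''
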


\begin{proof}
We have the standard tensor-Hom adjoint relationship $\Hom_S(M \otimes_R S , N) = \Hom_R(M, \Hom_S(S,N))$, where $\Hom_S(S,N)$ inherits its $R$-module structure via $f$. So in this case, $\Hom_S(S,N) = {}_RN$ coincides with restriction of scalars along $f$. Hence the adjunction is $\Hom_S(M \otimes_R S , N) = \Hom_R(M, {}_RN)$. We note that $- \otimes_R S$ is both exact and preserves projective modules. Similarly ${}_R(-) = \Hom_S(S,-)$ is exact and preserves injectives, since $\Hom_R(-, {}_RN) = \Hom_S(- \otimes_R S , N)$. In any case, it becomes clear that the adjunction extends to $\Ext^n_S(M \otimes_R S , N) = \Ext^n_R(M, {}_RN)$ for each $n$. It is easy to see that if $M$ is a finitely presented $R$-module, then $M \otimes_R S$ is a finitely presented $S$-module. Hence taking $N = A$ to be an absolutely pure (in other words FP-injective) $S$-module, we see that $\Ext^1_R(M, {}_RA) = \Ext^1_S(M \otimes_R S , N) = 0$ for all finitely presented $R$-modules $M$. So ${}_RA$ is an FP-injective $R$-module, in other words, an absolutely pure $R$-module.
\end{proof}

\begin{lemma}\label{lemma-4}
Let $X$ be a semi-separated and locally coherent scheme with structure sheaf $\mathscr{R}$. Let $\{U_i\}$ be a semi-separating affine basis. Then for any inclusion $j : U_i \hookrightarrow X$, if $A$ is an absolutely pure $\mathscr{R}(U_i)$-module, then the direct image sheaf $j_*(\widetilde{A})$ is a locally absolutely pure (quasi-coherent) $\mathscr{R}$-module.
\end{lemma}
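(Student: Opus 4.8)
The plan is to verify local absolute purity on the affine basis $\{U_i\}$ and then invoke the Zariski-local nature of this property. First I would record that, because $X$ is semi-separated, the inclusion $j : U_i \hookrightarrow X$ is an affine morphism, so that $j_*(\widetilde{A})$ is genuinely a quasi-coherent $\mathscr{R}$-module; this is essentially the only role semi-separatedness plays, beyond guaranteeing that the intersections appearing below are affine. Since $X$ is locally coherent, local absolute purity is a Zariski-local property by \cite[Prop.~5.7]{EEO-pure} (the same fact used in Lemma~\ref{lemma-2}), so it suffices to check that $j_*(\widetilde{A})(U_k)$ is an absolutely pure $\mathscr{R}(U_k)$-module for every member $U_k$ of the affine basis $\{U_i\}$, which is in particular an affine open cover of $X$.

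Next I would compute the sections. For each basis element $U_k$ we have $j^{-1}(U_k) = U_i \cap U_k$, and this intersection is affine because $\{U_i\}$ is a semi-separating basis; hence $j_*(\widetilde{A})(U_k) = \widetilde{A}(U_i \cap U_k)$. Writing $R = \mathscr{R}(U_i)$, $S = \mathscr{R}(U_k)$, and $T = \mathscr{R}(U_i \cap U_k)$, the open immersion $U_i \cap U_k \hookrightarrow U_i$ of affine schemes makes $R \to T$ flat, so $\widetilde{A}(U_i \cap U_k) = A \otimes_R T$ as a $T$-module. Now viewing the affine scheme $U_i = \textnormal{Spec}\,R$ (with $R$ a commutative coherent ring) as the ambient scheme, Lemma~\ref{lemma-2} applied to the affine open $U_i \cap U_k \subseteq U_i$ tells us that $A \otimes_R T = \widetilde{A}(U_i \cap U_k)$ is absolutely pure as a $T$-module.

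Finally, the $\mathscr{R}(U_k)$-module structure on $j_*(\widetilde{A})(U_k)$ is precisely the restriction of scalars of the $T$-module $A \otimes_R T$ along the ring map $S \to T$ induced by the open immersion $U_i \cap U_k \hookrightarrow U_k$, which is flat because it too is an open immersion of affine schemes. Lemma~\ref{lemma-3} then upgrades absolute purity over $T$ to absolute purity over $S = \mathscr{R}(U_k)$. Having checked the defining condition on every member of the affine cover $\{U_i\}$, Zariski-locality yields that $j_*(\widetilde{A})$ is locally absolutely pure. I do not expect a serious obstacle here: the content lies entirely in chaining the already-established Lemmas~\ref{lemma-2} and~\ref{lemma-3}, and the only points requiring genuine care are the bookkeeping ones, namely confirming that $U_i \cap U_k$ is affine (so that the section identifications and the flatness of both $R \to T$ and $S \to T$ are legitimate), and that the reduction to the basis is justified by the Zariski-local nature of local absolute purity, which in turn rests on the local coherence hypothesis.
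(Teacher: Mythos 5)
Your proposal is correct and follows essentially the same route as the paper's proof: reduce to checking sections on the semi-separating basis via the Zariski-locality of local absolute purity from~\cite[Prop.~5.7]{EEO-pure}, identify $[j_*(\widetilde{A})](U_k) = \widetilde{A}(U_i \cap U_k)$ using affineness of $U_i \cap U_k$, apply Lemma~\ref{lemma-2} to get absolute purity over $\mathscr{R}(U_i \cap U_k)$, and then transfer it to $\mathscr{R}(U_k)$ via Lemma~\ref{lemma-3} along the flat restriction map. The only cosmetic difference is your explicit identification $\widetilde{A}(U_i \cap U_k) \cong A \otimes_{\mathscr{R}(U_i)} \mathscr{R}(U_i \cap U_k)$, which the paper does not need to spell out.
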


\begin{proof}
Because $X$ is semi-separated, the direct image functor $j_*(\widetilde{A})$ preserves quasi-coherent. So the goal is to show that this is a locally absolutely pure sheaf. Since $X$ is locally coherent it is enough, by~\cite[Prop.~5.7]{EEO-pure},  to show that $[j_*(\widetilde{A})]( U_k)$ is an absolutely pure $\mathscr{R}(U_k)$-module for each $U_k \in \{U_i\}$. By the definition of $j_*$ we have $[j_*(\widetilde{A})](U_k) = \widetilde{A}(U_i  \cap U_k)$. The assumption that $\{U_i\}$ is a semi-separating affine basis means that $U_i \cap U_k$ is again affine open. So $\widetilde{A}(U_i  \cap U_k)$ is certainly absolutely pure as an $\mathscr{R}(U_i \cap U_k)$-module, by Lemma~\ref{lemma-2}. To complete the proof, we need it to be absolutely pure as an $\mathscr{R}(U_k)$-module.  However, the inclusion $U_i \cap U_k  \hookrightarrow U_k$ corresponds to a flat ring homomorphism $f : \mathscr{R}(U_k) \xrightarrow{} \mathscr{R}(U_i \cap U_k)$, and viewing $\widetilde{A}(U_i  \cap U_k)$ as a $\mathscr{R}(U_k)$-module corresponds to restriction of scalars along the homomorphism, by~\cite[Prop.~5.2(d)]{hartshorne}. So applying Lemma~\ref{lemma-3} completes the proof.
\end{proof}

We are now ready to prove the theorem.

\begin{proof}[Proof of Theorem~\ref{them-AC-characterization}]
Suppose  $\mathscr{F}_{\bullet}$ is (exact) AC-acyclic in the sense of Definition~\ref{def-AC-sheaves}. This means it satisfies $A_U \otimes  \mathscr{F}_{\bullet}(U)$ is exact for all open affine $U \subseteq X$ and every absolutely pure $\mathscr{R}(U)$-module $A_U$. Let $\mathscr{A}$ be a locally absolutely pure sheaf. For each open affine $U$ of $X$, we do have $(\mathscr{A} \otimes  \mathscr{F}_{\bullet})(U) = \mathscr{A}(U) \otimes  \mathscr{F}_{\bullet}(U)$. Hence $(\mathscr{A} \otimes  \mathscr{F}_{\bullet})(U)$ is exact for all open affine $U$, which implies the complex $\mathscr{A} \otimes  \mathscr{F}_{\bullet}$ is exact.

For the converse, let $\{U_i\}$ be a semi-separating affine basis. Assume $\mathscr{A} \otimes \mathscr{F}_{\bullet}$ is exact for all locally absolutely pure sheaves $\mathscr{A}$. Since we showed in Corollary~\ref{cor-zariski-local} that (exact) AC-acyclicity is a Zariski-local property, we only need to show that $\mathscr{F}_{\bullet}(U_i)$ is an (exact) AC-acyclic complex of flat $\mathscr{R}(U_i)$-modules for each $U_i$. Certainly $\mathscr{F}_{\bullet}(U_i)$ is an (exact) complex of flat sheaves, so let $A$ be an absolutely pure $\mathscr{R}(U_i)$-module. To complete the proof we need to show that $A \otimes \mathscr{F}_{\bullet}(U_i)$ is exact. But letting $j : U_i \hookrightarrow X$ be the inclusion, we have from Lemma~\ref{lemma-4} that the direct image sheaf $j_*(\widetilde{A})$ is a locally absolutely pure (quasi-coherent) $\mathscr{R}$-module. Hence $j_*(\mathscr{A}) \otimes \mathscr{F}_{\bullet}$ is exact. In particular, we get that $(j_*(\widetilde{A}) \otimes \mathscr{F}_{\bullet})(U_i)$ is an exact complex of $\mathscr{R}(U_i)$-modules. But since $U_i$ is an affine open subset we have $(j_*(\widetilde{A}) \otimes \mathscr{F}_{\bullet})(U_i) = [j_*(\widetilde{A}) ](U_i) \otimes \mathscr{F}_{\bullet}(U_i)$. But by the very definition of $j_*$, we easily see $[j_*(\widetilde{A}) ](U_i) = A$. Thus $A \otimes \mathscr{F}_{\bullet}(U_i)$ is exact as desired.
\end{proof}

\section{Flat model structures and localization sequences}\label{sec-AC-acyclic flat model on sheaf}

At this point it is quite easy to reach our goal of extending the results of Section~\ref{sec-AC-acyclic flat model on ch} to complexes of quasi-coherent sheaves. After doing this we will use the resulting model structures to obtain localization sequences. We start with the following extension of Theorem~\ref{thm-how to create flat on chain}. We interpret it as providing a way to extend homotopy categories of $A$-acyclic complexes of projectives to non-affine schemes. Recall our convention from the last section: All sheaves are quasi-coherent even if not explicitly stated.

\begin{theorem}\label{thm-how to create flat on sheaf}
Let $X$ be a quasi-compact and semi-separated scheme with structure sheaf $\mathscr{R}$.
Suppose that for each open affine $U \subseteq X$, we are given an $\mathscr{R}(U)$-module $A_U$.
Let $\barAF$ denote the class of all $A_X$-acyclic complexes of flat sheaves (see Definition~\ref{def-A-acyclicity-sheaves}). Then there is a cofibrantly generated abelian model structure, the \textbf{$\boldsymbol{A_X}$-acyclic flat model structure}, $\class{M} = ({}_A\tilclass{F}, \class{W}, \dgclass{C})$ on $\textnormal{Ch}(\Qco(X))$, for which ${}_A\tilclass{F} \cap \class{W}$ equals the class  $\tilclass{F}$ of flat chain complexes. If $X = \textnormal{Spec}\,R$ is an affine scheme, then this model structure coincides with the one in Theorem~\ref{thm-how to create flat on chain}. Therefore, in the affine case, we have $\textnormal{Ho}(\class{M}) \cong K_{\text{A-ac}}(Proj)$, the chain homotopy category of all $A$-acyclic complexes of projectives.
\end{theorem}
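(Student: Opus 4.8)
The plan is to mirror the proof of Theorem~\ref{thm-how to create flat on chain}, gluing two complete hereditary cotorsion pairs on $\textnormal{Ch}(\Qco(X))$ via the machine of~\cite{gillespie-hovey triples}. The two pairs are $(\barAF, \barAF^{\perp})$, which is complete hereditary and cogenerated by a set by Theorem~\ref{them-A-cot-pairs-sheaves} (using that a quasi-compact and semi-separated $X$ has a flat generator), and the pair $(\tilclass{F}, \dgclass{C})$ underlying the flat model structure on $\textnormal{Ch}(\Qco(X))$, which is complete hereditary and cogenerated by a set by~\cite{gillespie-sheaves}. First I would record the inclusion $\tilclass{F} \subseteq \barAF$: a categorically flat complex restricts on each open affine $U$ to a categorically flat complex of $\mathscr{R}(U)$-modules, which is pure exact and hence stays exact after applying $A_U \otimes_{\mathscr{R}(U)} -$. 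Taking right orthogonals gives $\barAF^{\perp} \subseteq \dgclass{C}$, so the hypothesis $\tilclass{F}_{\textnormal{thm}} \subseteq \class{F}_{\textnormal{thm}}$ of~\cite{gillespie-hovey triples} holds with cofibrant class $\barAF$ and fibrant class $\dgclass{C}$.

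Everything then reduces to the core equality $\barAF \cap \barAF^{\perp} = \tilclass{F} \cap \dgclass{C}$, and I would prove that both sides equal the class $\class{V}$ of contractible complexes with flat-cotorsion components, treating the two cotorsion pairs uniformly. For $\mathscr{X} \in \class{V}$: each $\mathscr{X}_n$ is flat and $\mathscr{X}$ is split exact with flat cycles, so $\mathscr{X} \in \tilclass{F}$; moreover $\mathscr{X}(U)$ is contractible for every affine $U$, so $A_U \otimes \mathscr{X}(U)$ is exact and $\mathscr{X} \in \barAF$. Any extension of an object of $\barAF$ (or $\tilclass{F}$) by $\mathscr{X}$ is split in each degree, since the kernel has cotorsion and the quotient flat components, and it then splits as a complex because $\mathscr{X}$ is contractible; hence $\mathscr{X}$ lies in both right classes. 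Conversely, if $\mathscr{X}$ lies in either core then it is degreewise flat (left class); testing against the disks $D^n(\mathscr{F})$ for flat $\mathscr{F}$, which lie in $\barAF$ and $\tilclass{F}$, and using the adjunction isomorphism $\Ext^1_{\textnormal{Ch}(\Qco(X))}(D^n(\mathscr{F}), \mathscr{X}) \cong \Ext^1_{\Qco(X)}(\mathscr{F}, \mathscr{X}_n)$ (valid because evaluation at degree $n$ is exact and carries injective complexes to injective sheaves), I obtain that each $\mathscr{X}_n$ is cotorsion. Finally, with flat source components and cotorsion target components every degreewise extension splits, so $\Ext^1$ agrees with the degreewise-split $\Ext^1$, which is $\Hom_{K}(-, \Sigma(-))$; as the right classes are closed under $\Sigma^{\pm 1}$, this forces $\Hom_K(\mathscr{X}, \mathscr{X}) = 0$, so $1_{\mathscr{X}} \sim 0$ and $\mathscr{X} \in \class{V}$.

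With the core equality in hand, \cite{gillespie-hovey triples} produces a unique thick $\class{W}$ and a Hovey triple $(\barAF, \class{W}, \dgclass{C})$ with $\barAF \cap \class{W} = \tilclass{F}$ (the flat complexes) and $\class{W} \cap \dgclass{C} = \barAF^{\perp}$, which is exactly the asserted model structure; it is cofibrantly generated by~\cite{hovey} because both cotorsion pairs are cogenerated by sets. For the affine case $X = \textnormal{Spec}\,R$, I would invoke the equivalence $\Qco(X) \simeq R\textnormal{-Mod}$, under which $\barAF$ is carried to the class ${}_A\tilclass{F}$ of $A$-acyclic complexes of flat modules and $\dgclass{C}$ to the dg-cotorsion complexes; since a Hovey triple is determined by its cofibrant and fibrant classes, this identifies the model structure with the one of Theorem~\ref{thm-how to create flat on chain}, whose homotopy category is $K_{\text{A-ac}}(Proj)$.

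I expect the main obstacle to be the contractibility half of the core computation, namely $\barAF \cap \barAF^{\perp} \subseteq \class{V}$. Unlike the module case, where~\cite[Proposition~3.7]{gillespie-degreewise-model-strucs} supplies a ready-made degreewise description of the right-hand cotorsion classes, over a scheme I must extract the flat-cotorsion components and the null-homotopy of $1_{\mathscr{X}}$ by hand, through the disk adjunction and the identification of $\Ext^1$ with homotopy classes of maps into a suspension. Verifying that these identifications are legitimate in $\textnormal{Ch}(\Qco(X))$ --- exactness of degreewise evaluation, injectivity of the components of injective complexes, and closure of the right classes under $\Sigma$ --- is where the care is needed, since $\Qco(X)$ lacks enough projectives.
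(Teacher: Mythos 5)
Your proposal is correct and follows essentially the same route as the paper: the paper's proof simply says to repeat the first paragraph of the proof of Theorem~\ref{thm-how to create flat on chain}, replacing the two module-level cotorsion pairs by $(\tilclass{F},\dgclass{C})$ from~\cite{gillespie-quasi-coherent} and $(\barAF,\barAF^{\perp})$ from Theorem~\ref{them-A-cot-pairs-sheaves}, and then invoking~\cite{gillespie-hovey triples} after identifying both cores with the contractible complexes of flat cotorsion objects. Your hand-made verification of the core equality via the disk adjunction and the identification of degreewise-split $\Ext^1$ with $\Hom_K(-,\Sigma(-))$ is exactly the right substitute for the citation of~\cite[Proposition~3.7]{gillespie-degreewise-model-strucs} used in the module case (which is stated only for $R$-Mod), so you have in fact supplied a detail the paper leaves implicit rather than deviated from its argument.
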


\begin{proof}
Follow the first paragraph of the proof of Theorem~\ref{thm-how to create flat on chain}. It is well known that there is a complete hereditary cotorsion pair $(\tilclass{F}, \dgclass{C})$ on complexes of sheaves, where $\tilclass{F}$ is the class of flat complexes. For example, see~\cite[Theorem~6.7]{gillespie-quasi-coherent}. For the other cotorsion pair, $(\barAF,\barAF^{\perp})$, we appeal to Theorem~\ref{them-A-cot-pairs-sheaves}. But everything else works the same way.
\end{proof}

\begin{example}\label{example-mock}
Suppose that for each open affine $U \subseteq X$, we take $A_U = 0$ for our choice of each $\mathscr{R}(U)$-module. Then all complexes $\mathscr{F}_{\bullet}$ of flat sheaves  are $A_X$-acyclic. In this case, the $A_X$-acyclic model structure has homotopy category equivalent to $K_m(Proj X)$, Murfet's \emph{mock homotopy category of projectives} from~\cite{murfet-thesis}. Similarly, if for each open affine $U \subseteq X$, we take $A_U = \mathscr{R}(U)$, then a complex $\mathscr{F}_{\bullet}$ of flat sheaves is $A_X$-acyclic if and only if it is exact. In this case, the $A_X$-acyclic model structure has homotopy category equivalent to
 $K_{m,ac}(Proj X)$, Murfet's \emph{mock projective stable derived category} from~\cite{murfet-thesis}. For more details on these model structures we refer the reader to~\cite{gillespie-mock projectives}.
\end{example}

In the same way that we obtained Corollary~\ref{cor-AC-cot-pairs-sheaves}, we may use the idea in Setup~\ref{setup-choice of A} to find, for each open affine $U \subseteq X$, an $\mathscr{R}(U)$-module $A_U$ such that $\mathscr{F}_{\bullet}$ is $A_X$-acyclic if an only if it is (exact) AC-acyclic. Applying Theorem~\ref{thm-how to create flat on sheaf} gives us the following.

\begin{corollary}\label{cor-exact-AC-flat-model-sheaf}
Let $X$ be a quasi-compact and semi-separated scheme.
Let $\barAF$ denote the class of all (exact) AC-acyclic complexes of flat sheaves (see Definition~\ref{def-AC-sheaves}). Then there is a cofibrantly generated abelian model structure, the \textbf{(exact) AC-acyclic flat model structure}, $\class{M} = (\barAF, \class{W}, \dgclass{C})$ on $\textnormal{Ch}(\Qco(X))$, for which $\barAF \cap \class{W}$ equals the class  $\tilclass{F}$ of flat chain complexes. If $X = \textnormal{Spec}\,R$ is an affine scheme, then this model structure coincides with the one in Theorem~\ref{cor-exact-AC-flat-model}. Therefore, in the affine case, we have $\textnormal{Ho}(\class{M}) \cong K_{\text{fir}}(Proj)$, the chain homotopy category of all (exact) firmly acyclic complexes of projectives.
\end{corollary}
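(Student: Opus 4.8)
The plan is to obtain this corollary as a direct specialization of Theorem~\ref{thm-how to create flat on sheaf}, the only real content being the choice of local data $\{A_U\}$ that turns the abstract notion of $A_X$-acyclicity into (exact) AC-acyclicity. First, for each open affine $U \subseteq X$ I would apply Setup~\ref{setup-choice of A} to the ring $\mathscr{R}(U)$: this produces a set $\mathcal{S}_U$ of $\mathscr{R}(U)$-modules such that every absolutely clean $\mathscr{R}(U)$-module is a transfinite extension of modules in $\mathcal{S}_U$. Setting $A_U = \bigoplus_{M \in \mathcal{S}_U} M$, a complex of flat $\mathscr{R}(U)$-modules is $A_U$-acyclic exactly when it is AC-acyclic; and if I additionally make $\mathscr{R}(U)$ a direct summand of $A_U$, then $A_U$-acyclicity becomes exact AC-acyclicity.

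With this family $\{A_U\}$ fixed, the next step is the routine observation that a complex $\mathscr{F}_\bullet$ of flat sheaves is $A_X$-acyclic in the sense of Definition~\ref{def-A-acyclicity-sheaves} if and only if it is (exact) AC-acyclic in the sense of Definition~\ref{def-AC-sheaves}. Both notions are tested section-by-section over the open affines $U$, so this follows immediately from the module-level equivalence recorded above, applied at each $U$. Hence the class $\barAF$ furnished by Theorem~\ref{thm-how to create flat on sheaf} for this particular choice of $\{A_U\}$ is precisely the class of (exact) AC-acyclic complexes of flat sheaves. Invoking that theorem, and using that $X$ is quasi-compact and semi-separated, I obtain the cofibrantly generated abelian model structure $\class{M} = (\barAF, \class{W}, \dgclass{C})$ on $\textnormal{Ch}(\Qco(X))$ together with the identity $\barAF \cap \class{W} = \tilclass{F}$; both the cofibrant generation and this intersection formula come for free from the theorem.

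It remains to handle the affine case and read off the homotopy category. When $X = \textnormal{Spec}\,R$, the affine clause of Theorem~\ref{thm-how to create flat on sheaf} identifies $\class{M}$ with the $A$-acyclic flat model structure of Theorem~\ref{thm-how to create flat on chain} for the single module $A = A_X$; by Setup~\ref{setup-choice of A} this is exactly the (exact) AC-acyclic flat model structure of Corollary~\ref{cor-exact-AC-flat-model}, whose homotopy category is $K_{\text{A-ac}}(Proj)$. Finally, for this $A$ the $A$-acyclic complexes of projectives coincide with the firmly acyclic ones by Theorem~\ref{them-AC-acyclic-firmly-acyclic}, so $\textnormal{Ho}(\class{M}) \cong K_{\text{fir}}(Proj)$, as claimed. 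The one point that genuinely needs care is this affine comparison, where one must know that the section-wise definition of AC-acyclicity over $\textnormal{Spec}\,R$ collapses to plain AC-acyclicity over $R$ so that the two model structures literally agree; but this is already the content packaged into the affine clause of Theorem~\ref{thm-how to create flat on sheaf}, so no new argument is required and the rest is bookkeeping.
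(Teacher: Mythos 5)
Your proposal is correct and follows essentially the same route as the paper: the paper likewise obtains this corollary by choosing, for each open affine $U$, the module $A_U$ from Setup~\ref{setup-choice of A} applied to $\mathscr{R}(U)$ (adding $\mathscr{R}(U)$ as a summand for the exact version) so that $A_X$-acyclicity becomes (exact) AC-acyclicity, and then invoking Theorem~\ref{thm-how to create flat on sheaf}, including its affine clause for the identification with Corollary~\ref{cor-exact-AC-flat-model} and hence with $K_{\text{fir}}(Proj)$.
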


As we already pointed out, we do not know whether or not the notion of AC-acyclicity is Zariski-local in full generality. But for $X$ semi-separated and locally coherent we have Theorem~\ref{them-zariski-local}. Now recall that by a \emph{coherent scheme} we mean one that is quasi-compact and locally coherent. Thus Theorem~\ref{them-coherent-scheme-model} of the introduction follows as a special case of the above Corollary~\ref{cor-exact-AC-flat-model-sheaf}.

\subsection{Adjoints of homotopy categories}
We end by pointing out the existence of certain adjoints and localization sequences involving the homotopy categories we have constructed. Each result is an easy corollary of a known general theorem.  First, by combining Corollary~\ref{cor-AC-cot-pairs-sheaves} with~\cite[Prop.~1.4]{neeman-adjoints}, we obtain the following.

\begin{corollary}\label{cor-adjoints}
Let $X$ be any scheme and $K(Flat X)$ the chain homotopy category of all flat sheaves. Denote by $K_{\scriptscriptstyle{AC}}(Flat X)$, the full subcategory of either the AC-acyclic or exact AC-acyclic complexes of flats, as in Definition~\ref{def-AC-sheaves}. In either case, the inclusion $K_{\scriptscriptstyle{AC}}(Flat X) \xrightarrow{} K(Flat X)$ has a right adjoint.
\end{corollary}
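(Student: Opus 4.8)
The plan is to deduce the existence of the right adjoint from Neeman's general criterion~\cite[Prop.~1.4]{neeman-adjoints}, feeding into it the covering property supplied by Corollary~\ref{cor-AC-cot-pairs-sheaves}. Note first that only the covering-class assertion of that corollary is needed, and this part holds for an \emph{arbitrary} scheme $X$; the stronger cotorsion-pair conclusion (which required $X$ to be quasi-compact and semi-separated) plays no role here. This is precisely what permits the present statement to be made for any scheme.

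First I would record the closure properties that make $K_{\scriptscriptstyle{AC}}(Flat X)$ a triangulated subcategory of $K(Flat X)$ stable under the operations appearing in Neeman's hypotheses. Degreewise flatness, exactness, and AC-acyclicity are each preserved under the suspension $\Sigma^{\pm n}$, since shifting a complex alters none of these conditions; and each is preserved under arbitrary coproducts, since $A_U \otimes_{\mathscr{R}(U)} -$ and $\Gamma(U,-)$ commute with direct sums (so exactness of $A_U \otimes_{\mathscr{R}(U)} \mathscr{F}_\bullet(U)$ is inherited by a coproduct) and flat sheaves are closed under coproducts. Thus, in both the AC-acyclic and the exact AC-acyclic versions, $\barAF$ determines a coproduct-closed triangulated subcategory $K_{\scriptscriptstyle{AC}}(Flat X)$ of $K(Flat X)$.

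Next I would verify the input of~\cite[Prop.~1.4]{neeman-adjoints}. The categories $\Qco(X)$ and $\textnormal{Ch}(\Qco(X))$ are Grothendieck, hence locally presentable, and $K(Flat X)$ has all coproducts; combined with the fact from Corollary~\ref{cor-AC-cot-pairs-sheaves} that every complex of flat sheaves admits a $\barAF$-cover, this is exactly the data Neeman's proposition requires in order to produce a right adjoint to the inclusion. Applying that proposition to $K_{\scriptscriptstyle{AC}}(Flat X) \hookrightarrow K(Flat X)$ then yields the desired right adjoint, uniformly in both cases.

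The main obstacle is entirely encapsulated in~\cite[Prop.~1.4]{neeman-adjoints}: a single $\barAF$-cover is a coreflection in $\textnormal{Ch}(\Qco(X))$ but not automatically in the homotopy category, since a cover that is nullhomotopic need not vanish, so its chain-level universal property does not directly descend to chain-homotopy classes. Neeman's machinery bridges this gap by assembling the cover data into a genuine homotopy-level coreflection. Consequently the only point one must check carefully on our side is that our covering class matches Neeman's hypotheses precisely---above all the closure under coproducts and the set-indexed generation underlying the cover construction established in Corollary~\ref{cor-AC-cot-pairs-sheaves}---after which the conclusion is formal.
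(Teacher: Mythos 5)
Your proposal is correct and follows the paper's own (very brief) argument exactly: the paper likewise obtains the adjoint by combining the covering-class part of Corollary~\ref{cor-AC-cot-pairs-sheaves} with~\cite[Prop.~1.4]{neeman-adjoints}, and its accompanying remark makes the same observation you do, that only the existence of (exact) AC-acyclic flat precovers is needed, so no quasi-compactness or semi-separatedness hypothesis is required. The closure checks you include (suspensions, coproducts) are routine verifications the paper leaves implicit.
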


We note that the proof of the above only relies on the existence of (exact) AC-acyclic flat precovers. In particular it does not require the existence of a set of flat generators. However, if we are willing to assume we have such a set of generators then we get an explanation for the adjunction by way of cotorsion pairs and abelian model categories. In fact, in the spirit of Becker's left localization technique from~\cite[Prop.~1.4.6]{becker}, we can at once deduce a localization sequence with all three categories appearing as homotopy categories of hereditary abelian model structures.
To state the result, we let $\class{M}_1 = (\dwclass{F},\class{W}_1,\dgclass{C})$ denote the abelian model structure of~\cite[Corollary~4.1]{gillespie-mock projectives}. Here $\dwclass{F}$ denotes the class of all complexes of flat sheaves,  $\dwclass{F} \cap \class{W}_1 = \tilclass{F}$ is the class of categorically flat sheaves, and $\dgclass{C} = \rightperp{\tilclass{F}}$. As noted in Example~\ref{example-mock} above, this is a model structure for Murfet's mock homotopy category of projectives, which can also be thought of as $\class{D}(Flat X)$, the derived category of flat sheaves. We now let  $\class{M}_2 = (\barAF, \class{W}_2, \dgclass{C})$ denote either the AC-acyclic model structure or the exact AC-acyclic model structure from Corollary~\ref{cor-exact-AC-flat-model-sheaf}. Then in either case, the following holds by~\cite[dual of Prop.~3.2]{gillespie-mock projectives}.

\begin{corollary}\label{cor-localization}
Let $X$ be any quasi-compact and semi-separated scheme. Then there is a localization sequence
\[
\begin{tikzpicture}[node distance=3.5 cm, auto]
\node (A)  {$\textnormal{Ho}(\class{M}_2)$};
\node (B) [right of=A] {$\textnormal{Ho}(\class{M}_1)$};
\node (C) [right of=B] {$\textnormal{Ho}(\class{M}_2\backslash\class{M}_1)$};
%
%
\draw[->] (A.10) to node {$L(\textnormal{Id})$} (B.170);
\draw[<-] (A.350) to node [swap] {$R(\textnormal{Id})$} (B.190);
\draw[->] (B.10) to node {$L(\textnormal{Id})$} (C.173);
\draw[<-] (B.350) to node [swap] {$R(\textnormal{Id})$} (C.187);
\end{tikzpicture}
\]
where $L(\textnormal{Id})$ and $R(\textnormal{Id})$ are left and right derived identity functors on $\textnormal{Ch}(\Qco(X))$ and $\class{M}_2 \backslash \class{M}_1$ is the \emph{left localization of $\class{M}_1$ by $\class{M}_2$}. By its construction, it is the Hovey triple $\class{M}_2 \backslash \class{M}_1 = (\dwclass{F}, \class{V}, \class{W}_2 \cap \dgclass{C})$ where $\class{V}$ is a uniquely determined thick class as described in~\cite[dual of Prop.~3.1]{gillespie-hovey triples, gillespie-mock projectives}.
\end{corollary}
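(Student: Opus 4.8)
The plan is to realize the three displayed homotopy categories as those of compatible hereditary abelian model structures on $\textnormal{Ch}(\Qco(X))$ and then invoke the abstract localization machinery of \cite[dual of Prop.~3.2]{gillespie-mock projectives}. First I would record that both Hovey triples are available: the triple $\class{M}_1 = (\dwclass{F}, \class{W}_1, \dgclass{C})$ exists by \cite[Corollary~4.1]{gillespie-mock projectives}, while $\class{M}_2 = (\barAF, \class{W}_2, \dgclass{C})$ is exactly the (exact) AC-acyclic flat model structure supplied by Corollary~\ref{cor-exact-AC-flat-model-sheaf}; the argument is uniform for the two choices of $\class{M}_2$. Here the hypothesis that $X$ is quasi-compact and semi-separated is what guarantees the flat generators needed for that corollary, and both model structures are complete and hereditary.

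The decisive observation is that these two model structures are built to be compatible. They share the same class $\dgclass{C}$ of fibrant objects, their cofibrant classes are nested, $\barAF \subseteq \dwclass{F}$, and their trivially cofibrant classes agree: $\barAF \cap \class{W}_2 = \tilclass{F} = \dwclass{F} \cap \class{W}_1$, the first equality from Corollary~\ref{cor-exact-AC-flat-model-sheaf} and the second from the description of $\class{M}_1$. Consequently the two cores coincide, both being the class $\tilclass{F} \cap \dgclass{C}$ of flat cotorsion complexes, and the identity functor is left Quillen from $\class{M}_2$ to $\class{M}_1$, inducing $L(\textnormal{Id})\colon \textnormal{Ho}(\class{M}_2) \to \textnormal{Ho}(\class{M}_1)$ together with its right adjoint.

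I would then construct $\class{M}_2 \backslash \class{M}_1$ by feeding the two complete hereditary cotorsion pairs $(\barAF, \class{W}_2 \cap \dgclass{C})$ (from $\class{M}_2$) and $(\dwclass{F}, \class{W}_1 \cap \dgclass{C})$ (from $\class{M}_1$) into the gluing theorem of \cite{gillespie-hovey triples} recalled in Section~\ref{subsec-abelian model cats}. Their cores agree (both equal $\tilclass{F} \cap \dgclass{C}$), and the required inclusion of trivial-fibrant classes, $\class{W}_1 \cap \dgclass{C} \subseteq \class{W}_2 \cap \dgclass{C}$, is immediate from $\barAF \subseteq \dwclass{F}$, which forces $\rightperp{\dwclass{F}} \subseteq \rightperp{\barAF}$. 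The gluing theorem then produces a unique thick class $\class{V}$ yielding the Hovey triple $\class{M}_2 \backslash \class{M}_1 = (\dwclass{F}, \class{V}, \class{W}_2 \cap \dgclass{C})$, characterized by $\dwclass{F} \cap \class{V} = \barAF$ and $\class{V} \cap \class{W}_2 \cap \dgclass{C} = \class{W}_1 \cap \dgclass{C}$. Since $\dwclass{F} \cap \class{V} = \barAF \supseteq \tilclass{F}$, the identity is also left Quillen from $\class{M}_1$ to $\class{M}_2 \backslash \class{M}_1$, inducing the second leg $L(\textnormal{Id})\colon \textnormal{Ho}(\class{M}_1) \to \textnormal{Ho}(\class{M}_2 \backslash \class{M}_1)$.

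With all three model structures and the two left Quillen identity functors in place, the localization sequence follows from \cite[dual of Prop.~3.2]{gillespie-mock projectives}: the left and right derived identities supply the adjoint pairs, and the three defining properties of Definition~\ref{def-localization sequence} are exactly what that proposition delivers. I expect the only genuine work to lie in the verification stage rather than in any new construction, namely in confirming the core equality and the inclusion $\rightperp{\dwclass{F}} \subseteq \rightperp{\barAF}$ that make the gluing theorem applicable, and in checking that the essential image of $\textnormal{Ho}(\class{M}_2)$ inside $\textnormal{Ho}(\class{M}_1)$ is precisely the kernel of the quotient functor. This last point, property~(3) of Definition~\ref{def-localization sequence}, is encoded abstractly by the identity $\dwclass{F} \cap \class{V} = \barAF$, and is the one step that one must trust the general proposition to handle rather than verify by hand.
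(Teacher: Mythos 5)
Your proposal is correct and follows essentially the same route as the paper: the paper simply assembles $\class{M}_1$ from \cite[Corollary~4.1]{gillespie-mock projectives} and $\class{M}_2$ from Corollary~\ref{cor-exact-AC-flat-model-sheaf} (both sharing fibrant class $\dgclass{C}$, with $\barAF\subseteq\dwclass{F}$ and common core) and then cites the dual of Prop.~3.2 of~\cite{gillespie-mock projectives} to produce the left localization and the localization sequence in one stroke. Your explicit verification of the gluing-theorem hypotheses (equal cores, $\rightperp{\dwclass{F}}\subseteq\rightperp{\barAF}$) is exactly the content that the cited proposition packages, so nothing is missing.
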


In particular, Corollary~\ref{cor-localization-sequence} of the Introduction is a special case of the above.


\providecommand{\bysame}{\leavevmode\hbox to3em{\hrulefill}\thinspace}
\providecommand{\MR}{\relax\ifhmode\unskip\space\fi MR }
\providecommand{\MRhref}[2]{%
  \href{http://www.ams.org/mathscinet-getitem?mr=#1}{#2}
}
\providecommand{\href}[2]{#2}

\end{document}